\tikzset{>={Latex[width=1.2mm,length=1.7mm]}}
\theoremstyle{plain}
\newtheorem{thm}{Theorem}[section]
\newtheorem{prop}[thm]{Proposition}
\numberwithin{equation}{section}
\newtheorem{lemma}[equation]{Lemma}
\newtheorem{theorem}[equation]{Theorem}
\newtheorem{utheorem}{\textrm{\textbf{Theorem}}}
\newtheorem{ucor}[utheorem]{\textrm{\textbf{Corollary}}}
\theoremstyle{definition}
\newtheorem{ualgo}[utheorem]{\textrm{\textbf{Algorithm}}}
\newtheorem{example}[thm]{Example}
\newtheorem{defn}[thm]{Definition}
\newtheorem{remark}[thm]{Remark}
\newcommand{\diag}{\mathrm{diag}}
\newcommand{\C}{\mathbb{C}}
\newcommand{\R}{\mathbb{R}}
\newcommand{\sn}{\mathrm{S}_n}
\newcommand{\csn}{\mathbb{C}[\sn]}
\newcommand{\I}{\mathrm{I}}
\newcommand{\El}{\mathrm{E}}
\newcommand{\K}{\mathcal{B}}
\newcommand{\imm}{\mathrm{Imm}}
\newcommand{\sumsb}[1]{\sum_{\substack{#1}}} 
\newcommand{\defeq}{:=}
\newcommand{\spn}{\mathrm{span}}
\newcommand{\sgn}{\mathrm{sgn}}
\newcommand{\ntnsp}{\negthinspace}
\newcommand{\permmon}[2]{#1_{1,#2_1} \ntnsp\cdots {#1}_{n,#2_n}}
\newcommand{\tn}{T_n(\xi)}
\newcommand{\So}{\mathfrak{C}}
\newcommand{\Ro}{\mathfrak{R}}
\newcommand{\Shi}{\mathfrak{S}}
\newcommand{\In}{\mathcal{I}}
\newcommand{\A}{\alpha}
\newcommand{\Gr}{\mathrm{Gr}}
\newcommand{\TGr}{\mathrm{Gr}^{\geq 0}(m,m+n)}
\DeclarePairedDelimiter\floor{\lfloor}{\rfloor}
\begin{document}



\title[Chevalley operations on TNN Grassmannians]{Chevalley operations on TNN Grassmannians}

\author[Prateek Kumar Vishwakarma]{Prateek Kumar Vishwakarma \\ Universit\'e Laval}

\address[]{D\'epartement de math\'ematiques et de statistique, Universit\'e Laval, Qu\'ebec, Canada}
\email{prateek-kumar.vishwakarma.1@ulaval.ca,~prateekv@alum.iisc.ac.in}

\date{\today}

\keywords{Total positivity, totally nonnegative Grassmannian, Chevalley generators, Chevalley operations, determinantal inequalities, 321-avoiding permutations and involutions, log-supermodularity, numerical positivity, Pl\"ucker coordinates, Barrett--Johnson inequalities, induced character immanants, cluster algebra}

\subjclass[1991]{Primary 15A15, 15B48, 15A15; secondary 15A45, 20C08}

\bibliographystyle{dart}

\begin{abstract}
Lusztig showed that invertible totally nonnegative (TNN) matrices form a semigroup generated by positive definite diagonal matrices and Chevalley generators. From the Grassmann analogue of these, we introduce \textit{Chevalley operations} on index sets, which we show have a rich variety of applications. We begin by providing a complete classification of all inequalities that are quadratic in Pl\"ucker coordinates over the totally nonnegative part of the Grassmannian:
\[
\sum_{I,J} c_{I,J} \Delta_I \Delta_J \geq 0 \quad\mbox{over}\quad \mathrm{Gr}^{\geq 0} (m, m + n)
\]
where each $c_{I,J}$ is real, and $\Delta_I, \Delta_J$ are Pl\"ucker coordinates satisfying a homogeneity condition. Using an idea of Gekhtman--Shapiro--Vainshtein, we also explain how our Chevalley operations can be motivated from cluster mutations, and lead to working in Grassmannians of smaller dimension, akin to cluster algebras.

We then present several applications of Chevalley operations. First, we obtain certificates for the above inequalities via sums of coefficients $c_{I,J}$ over 321-avoiding permutations and involutions; we believe this refined results of Rhoades--Skandera [\textit{Ann.\ Comb.}\ 2005] for TNN-matrix inequalities via their Temperley--Lieb immanant idea.

Second, we provide a novel proof via Chevalley operations of Lam's log-supermodularity of Pl\"ucker coordinates [\textit{Current Develop.\ Math.}\ 2014]. This has several consequences:
$(a)$~Each positroid, corresponding to the positroid cells in Postnikov's decomposition (2006) of the TNN Grassmannian, is a distributive lattice.
$(b)$~It also yields numerical positivity in the main result of Lam--Postnikov--Pylyavskyy [\textit{Amer.\ J.\ Math.}\ 2007].
$(c)$~We show the coordinatewise monotonicity of ratios of Schur polynomials, first proved by Khare--Tao [\textit{Amer.\ J.\ Math.}\ 2021] and which is the key result they use to obtain quantitative estimates for entrywise transforms of correlation matrices.

Third, we employ our Chevalley operations to show that the majorization order over partitions implicates a partial order for induced character immanants over TNN matrices, proved originally by Skandera--Soskin [\textit{Linear Multilinear Algebra} 2025].

\end{abstract}

\maketitle
\tableofcontents

\section{Introduction}
A real matrix is called \textit{totally nonnegative/positive (TNN/TP)} if the determinant of every square submatrix is nonnegative/positive. These matrices arise in applied and pure mathematics. To give an idea of their prevalence: these matrices occur in work of Gantmacher, Krein, Schoenberg in analysis and interacting particle systems \cite{ASW,GantKreinOsc}; Efron, Karlin in probability and statistics \cite{Karlin64}; Motzkin, Whitney, Cryer, Fallat, Johnson, Pinkus in matrix theory \cite{Motzkin,W,Fallat,Cryer,KarlinPinkus}; Lusztig, Rietsch in representation theory \cite{LusztigTP, Rietsch}; Berenstein, Fomin, Zelevinsky in cluster algebras \cite{BFZ99,berenstein1997total}; Postnikov, Lam in combinatorics \cite{postnikov2006total,lam2015totally,lam2014amplituhedron}; and many others. Some of the recent works to which one may refer are \cite{Brosowsky-Chepuri-Mason-JCTA, Chepuri-Sherman-Bennett-CJM, Khare-Chou-Kannan, FV, FarPost,Galashin-Karp-Lam,Galashin-Pylyavskyy,AK-book,lam2014amplituhedron,lam2015totally,OPostSpey2015,Parisi-Sherman-Bennett-Williams,postnikov2006total,RSkanTLImmp,
JScott2006,Serhiyenko-Sherman-Bennett-Williams,skandera2004inequalities,skan2022sosk,SV,LW}. 

Suppose $n\geq 1$ is an integer. A real polynomial $p({\mathrm{\bf x}})$ in $n^2$ variables $(x_{ij})_{i,j=1}^n=:{\mathrm{\bf x}}$ is called totally nonnegative if $p({\mathrm{\bf x}})\geq 0$ whenever ${\mathrm{\bf x}}$ is totally nonnegative. Lusztig \cite{LusztigTP} extended the notion of total positivity to reductive groups $G$, where the totally nonnegative part, denoted by $G^{\geq 0}$ is a semialgebraic subset generated by \textit{Chevalley generators}. Lusztig also showed \cite{lusztig1998introduction} that $G^{\geq 0}$ is the subset of $G$ where the \textit{dual canonical basis} consists entirely of the TNN polynomials. While this collection has no complete description yet, several recent results have led to substantial progress in (joint) works of Fallat, Gekhtman, Johnson, Rhoades, Skandera, Lam, and others \cite{Fallat-Gekhtman-Johnson, RSkanTLImmp, skandera2004inequalities, FV, lam2015totally}. One of the most notable is the work of Rhoades--Skandera \cite{RSkanTLImmp} characterizing all quadratic homogeneous TNN polynomials via a fundamental collection called Temperley--Lieb immanants. Lam \cite{lam2015totally} delineated the Grassmann analog of these via certain \textit{partial nonncrossing partitions}, and applied it to show that positroids (corresponding to each positroid cell in Postnikov's decomposition) form a distributive lattice. They have (beautifully!) been applied to show that the \textit{majorization order} over \textit{partitions} induces a partial order over the \textit{induced character immanants} for TNN matrices \cite{skan2022sosk}. In a work by Soskin--Vishwakarma \cite{SV} (following Fallat--Vishwakarma \cite{FV}), concepts that are fundamental in studying the geometric, combinatorial, and cluster algebraic properties of the Grassmannian -- \textit{Pl\"ucker relations} and \textit{weakly separated sets} -- are connected with and via the Temperley--Lieb immanants. These immanants have also inspired several other (different!) lines of research and contributed applications. For instance, see works of Farber--Postnikov \cite{FarPost}, Lu--Ren--Shen--Wang \cite{Lu}, Chepuri--Sherman-Bennett \cite{Chepuri-Sherman-Bennett-CJM}, and Nguyen--Pylyavskyy \cite{NguyenPylyavskyy}.

In this project, we revisit the theorem of Loewner, Whitney, Lusztig, and Berenstein--Fomin--Zelevinsky \cite{Loewner55,W,LusztigTP,BFZ99}: invertible TNN matrices are products of positive definite diagonal matrices and Chevalley generators. And inspired by it, we introduce \textit{Chevalley operations} and show several applications. In the first we show that calculated applications of these operations classify all homogeneous, quadratic polynomials that are TNN. In fact, we do this in a general setting, considering all real polynomials that are homogeneous and quadratic in Pl\"ucker coordinates, and classify all which are nonnegative over the entire totally nonnegative part of the Grassmannian. Considering that these classifications can also be obtained via Temperley--Lieb immanants (see Soskin--Vishwakarma \cite{SV} for the reformulation), and the extensive applications of Temperley--Lieb immanants, we further provide several other applications of Chevalley operations (Section~\ref{ChevalleyOp}). 

Namely, we first show the connection between Chevalley operations and Temperley--Lieb immanants via certain certificates using 321-avoiding permutations and 321-avoiding involutions; we believe this refines the work of Rhoades--Skandera \cite{RSkanTLImmp} (Subsection~\ref{TLimm}). Second, we provide an alternate proof of Lam's log-supermodularity of Pl\"ucker coordinates \cite{lam2015totally}. This yields several consequences: $(a)$~Positroids, corresponding to the positroid cells in Postnikov's decomposition \cite{postnikov2006total} of the TNN Grassmannian, form a distributive lattice under $(\min,\max)$ operations. $(b)$~The log-supermodularity also yields a proof of numerical positivity in the main result of Lam--Postnikov--Pylyavskyy \cite{LPP}. $(c)$~We obtain the coordinatewise monotonicity of ratios of Schur polynomials, which was shown by Khare--Tao \cite{KT}; this is the key result that they use to derive quantitative estimates for entrywise transforms of correlation matrices (Subsection~\ref{Lam-sup-mod}).   
Third, this is followed by an alternate proof of the fact that majorization order over partitions yields a partial order over the induced character immanants over TNN matrices, also known as the Barrett--Johnson inequalities for TNN matrices \cite{skan2022sosk} (Subsection~\ref{BJsection}).

In each of these applications, the known proofs are combinatorial via the Temperley--Lieb immanant idea, while the ones we present use the structure of Chevalley operations. In fact, we demonstrate that this structure bears resemblance with certain cluster mutations, which also lead to working in a Grassmannian of smaller dimension, parallel to cluster algebras (Section~\ref{clusteralgebra}). Finally in Section~\ref{WThm_TNNGrass} we prove our main results stated in Section~\ref{ChevalleyOp}, and discuss in Section \ref{futurework} some natural questions that follow as future work.


\subsubsection*{Some standard notations and definitions employed henceforth} In this article we assume that $1\leq m\leq n,$ and define $[m,n]:=\{m,\dots,n\},$ and $[m]:=[1,m].$ The Grassmannian $\Gr{(m,m+n)}$ is the manifold of $m$-dimensional vector subspaces of $\R^{m+n}.$ Every element in $\Gr{(m,m+n)}$ identifies with a full rank $(m+n)\times m$ real matrix $\mathcal{A}$ modulo elementary column operations. Suppose $I\subset [m+n]$ is an $m$-element subset of $[m+n],$ and define $\mathcal{A}_{I,[m]}$ to be the submatrix of $\mathcal{A}$ corresponding to row and column index sets $I$ and $[m],$ respectively. The determinant of these submatrices of $\mathcal{A}$ are given by $\Delta_I(\mathcal{A}):=\det \mathcal{A}_{I,[m]},$ where $(\Delta_I(\mathcal{A}))$ forms the projective coordinates of $V$ and are called the Pl\"ucker coordinates. The totally nonnegative (TNN) Grassmannian $\TGr\subset \Gr{(m,m+n)}$ refers to vector subspaces with all Pl\"ucker coordinates $\Delta_I\geq 0$ for some matrix representative.

\section{Motivation from cluster algebras}\label{clusteralgebra}

In this work we aim to classify all homogeneous determinantal inequalities that are quadratic in Pl\"ucker coordinates over the TNN Grassmannian, via a novel set of operations which we (define later and) call the Chevalley operations. The fundamental idea behind these operations can be traced into the cluster algebra structure on the Grassmannian. Gekhtman--Shapiro--Vainshtein \cite{GekShaVai03} obtained the cluster algebra structure of $\Gr(m,m+n)$ via the standard Poisson structure. This construction involves specific initial clusters for $\Gr(m,m+n),$ one of which is given by the following Pl\"ucker coordinates. Suppose $\mathcal{K}:=[m]\times [n].$
\begin{align}\label{initial-cluster}
\mathrm{\bf x}&:=\mathrm{\bf x}(m,m+n):=\{x_{0,n+1}:=x_{I_{0,n+1}},~x_{ij}:=x_{I_{ij}}:(i,j)\in \mathcal{K}\},\\
\mbox{where}\quad\mathrm{\bf I}_{(m,n)}&:=\big{\{}I_{0,n+1}:=[m],~I_{ij}:=\big{(}[1,m]\setminus [i-\ell(i,j),i]\big{)}\cup [j+m,j+m+\ell(i,j)]:(i,j)\in \mathcal{K}\big{\}},\nonumber\\
\mbox{and}\quad\ell(i,j)&:=\min(i-1,n-j) \mbox{ for }(i,j)\in \mathcal{K}.\nonumber
\end{align}
In showing that each of these Pl\"ucker coordinates form cluster variables, the authors of \cite{GekShaVai03} construct and apply a specially designed sequence $\Ro$ of \textit{cluster transformations} (also known as \textit{cluster mutations}) to the cluster in \eqref{initial-cluster}. In particular, the application of $\Ro$ to the initial cluster $\eqref{initial-cluster}$ yields the corresponding initial cluster $\mathrm{\bf I}_{(m-1,n-1)}$ for $\Gr(m-1,m+n-2).$ This process enabled the authors of \cite{GekShaVai03} to inductively obtain the cluster algebraic structure of the Grassmannian (via the standard Poisson structure). One can check the required details in \cite{GekShaVai03, gekhtman2010cluster}. Nevertheless, in essence, the sequence $\Ro$ of cluster mutations is equivalent to the operation $\Ro_{(m,m+1)}$ defined over $\mathrm{\bf I}_{(m,n)}$ via:
\begin{align}\label{cluster-trans-1}
&\Ro_{(m,m+1)}\big{(}\mathrm{\bf I}_{(m,n)}\big{)}:=\{\Phi_{(m,m+1)}\big{(}I\setminus\{m,m+1\}\big{)}:I\in \mathrm{\bf I}_{(m,n)}(m,m+1)\},\\
\mbox{where}\quad&\mathrm{\bf I}_{(m,n)}{(m,m+1)}:=\{I\in \mathrm{\bf I}_{(m,n)}:\mbox{ either }m\in I \mbox{ and }m+1\not\in I,\mbox{ or }m+1\in I
\mbox{ and }m\not\in I\},\nonumber\\
\mbox{and}\quad&\Phi_{(m,m+1)}:[m+n]\setminus\{m,m+1\} \to [m+n-2] \mbox{ is the (unique) order preserving map}.\nonumber
\end{align}
It can be seen that $\Ro_{(m,m+1)}\big{(}\mathrm{\bf I}_{(m,n)}\big{)} = \mathrm{\bf I}_{(m-1,m+n-2)},$ which is exactly the initial cluster for $\Gr(m-1,m+n-2)$ in \eqref{initial-cluster}. It may be interesting to note that the pair $(m,m+1)$ is unique to perform this reduction.

In addition to constructing the sequence $\Ro$ of cluster mutations, Gekhtman--Shapiro--Vainshtein provide a sequence $\Shi$ of cluster mutations that \textit{shifts} the indices in a initial cluster up by 1 $\pmod {m+n}.$ They also show that $\Shi$ applied to initial cluster \eqref{initial-cluster} yields another initial cluster. If one starts with the shifted initial cluster, and applies the cluster mutation sequence $\Ro$ to it, then the resultant cluster can also be obtained via an operation $\Ro_{(u,v)},$ for some consecutive integers $u,v\in [m+n] \pmod{m+n},$ similar to the operation $\Ro_{(m,m+1)}$ in \eqref{cluster-trans-1} (by essentially replacing $m$ with $u$ and $m+1$ with $v$ in \eqref{cluster-trans-1}). And this process again yields a initial cluster for $\Gr(m-1,m+n-2).$ One should be able to see that the operations $\Ro_{(u,v)}$ can be defined for any arbitrary collection of $m$-element subsets ${\bf I}$ of $[m+n]$ via a definition similar to \eqref{cluster-trans-1}. With this, let us present how the operations $\Ro_{(u,v)},$ for consecutive $u,v\in [m+n] \pmod{m+n},$ naturally extend over the most famous homogeneous quadratic equations over the Grassmannian called the \textit{Pl\"ucker relations}. (Since the theme of the paper is to discuss homogeneous quadratic inequalities, we consider this as a good starting point.) We formally define $\Ro_{(u,v)},$ for $u,v\in [m+n]$ (not necessarily consecutive), over an arbitrary family of $m$-element subsets. Suppose $\mathcal{I}$ denotes an ordered finite indexing set, and for each $\A\in \In,$ $I_\A \subseteq [m+n]$ is an $m$-element subset. Then 
\begin{align}\label{cluster-trans-2}
&\Ro_{(u,v)}\big{\{} I_{\A}:\A\in \In\big{\}}:=\{\Phi_{(u,v)}\big{(}I_\A \setminus\{u,v\}\big{)}:\A \in \In(u,v)\},\\
\mbox{where}\quad&\In(u,v):=\{\A \in \In:\mbox{ either }u\in I_\A \mbox{ and }v\not\in I_\A,\mbox{ or }v\in I_\A
\mbox{ and }u\not\in I_\A\},\nonumber\\
\mbox{and}\quad&\Phi_{(u,v)}:[m+n]\setminus\{u,v\} \to [m+n-2] \mbox{ is the (unique) order preserving map}.\nonumber
\end{align}
A Pl\"ucker relation can be written for two families $\{I_\A\}_{\A\in \In}$ and $\{J_\A\}_{\A\in \In}$ of $m$-element subsets:
\begin{align*}
    \sum_{\A\in \In} c_{\A} \Delta_{I_{\A}}\Delta_{J_{\A}} = 0 \quad \mbox{over}\quad \Gr(m,m+n)\quad \mbox{for some}\quad c_{\A}\in \{\pm 1\}.
\end{align*}
Since there are two families of Pl\"ucker coordinates involved in the relation above, the notion of ``consecutive'' $u,v$ has to be compatible with both the families and thus needs to be redefined. Fortunately for us, Pl\"ucker relations are homogeneous, i.e., the unions with multiplicities $I_\A\Cup J_\A$ are uniform over $\A\in \In.$ So, we have $I:=(I_\A\cup J_\A)\setminus(I_\A\cap J_\A)$ to choose consecutive integers $u,v$ from. We call $u,v$ \textit{consecutive in} $I$ provided for all integers $w$ strictly between $u$ and $v,$ $w\not\in I.$ Suppose the indexing sets are different, i.e., the families are given by $\{I_\A\}_{\A\in \In}$ and $\{J_\A\}_{\A\in \mathcal{J}}.$ Then it can observed that for all consecutive $u,v\in I,$ 
\begin{align*}
\A\in \In(u,v) \quad\mbox{if and only if}\quad\A\in \mathcal{J}(u,v).    
\end{align*}
This leads one to write the following relation for $m'<m$ and $n'<n$:
\begin{align*}
    \sum_{\A\in \In(u,v)} c_{\A} \Delta_{\Phi_{(u,v)}(I_{\A})}\Delta_{\Phi_{(u,v)}(J_{\A})} = 0 \quad \mbox{over}\quad \Gr(m',m'+n')\quad \mbox{for}\quad c_{\A}\in \{\pm 1\}.
\end{align*}
This obtained relation may be empty for certain choices of $u,v$ (for instance if $\In(u,v)=\emptyset$). It can be seen that for all \textit{carefully chosen} values of $u,v$ the aforementioned relation is a Pl\"ucker relation over a Grassmannian of strictly smaller dimension -- just as in the case of initial clusters above, where only a certain $\Ro_{(u,v)}$ yields the initial cluster for the Grassmannian of the smaller dimension.

A Grassmannian can be obtained from initial clusters (via cluster mutations; \cite{GekShaVai03, gekhtman2010cluster}) and from Pl\"ucker relations (via geometry; folklore). We discussed above that these defining objects can be reduced into ones for the smaller dimensional Grassmannian by applying certain operations. Moreover, the Pl\"ucker relations are homogeneous and quadratic, and the inequalities that we wish to classify in this paper are also homogeneous and quadratic. Therefore, we wonder if performing these operations over these inequalities would also perform a similar reduction and yield a novel classification for homogeneous inequalities that are quadratic in Pl\"ucker coordinates over $\TGr$. We discover that the answer is, yes$!$

\section{Chevalley operations and main results}\label{ChevalleyOp}

We begin with the formal definition of the inequalities that we classify in this paper.

\begin{defn}[Homogeneous quadratic determinantal inequalities over TNN Grassmannians]\label{defn:main:1}
Let $1\leq m \leq n$ be integers, and suppose $\mathcal{I}$ is an ordered finite indexing set. Suppose $I_{\alpha},J_{\alpha} \subset [m+n]$ have $m$ elements such that the multisets $I_{\alpha}\Cup J_{\alpha}$ are equal for all $\alpha \in \mathcal{I}.$ For real $c_{\alpha},$ we say that
\begin{align}\label{quad_ineq}
\sum_{\alpha\in \mathcal{I}} c_{\alpha} \Delta_{I_{\alpha}} \Delta_{J_{\alpha}} \mbox{ is nonnegative } \big{(}\geq 0\big{)}\quad  \mbox{over} \quad \TGr
\end{align}
provided $\sum_{\alpha\in \mathcal{I}} c_{\alpha} \Delta_{I_{\alpha}}(\mathcal{A}) \Delta_{J_{\alpha}}(\mathcal{A}) \geq 0$ for all ${(m+n)\times m}$ real matrices $\mathcal{A}$ whose column space lie in $\TGr$. For brevity, we call \eqref{quad_ineq} as a \textit{quadratic inequality}. To avoid confusion, we call \eqref{quad_ineq} as a \textit{quadratic expression} when we are unsure if $\sum_{\alpha\in \mathcal{I}} c_{\alpha} \Delta_{I_{\alpha}} \Delta_{J_{\alpha}} \geq 0$  over $\TGr.$
\end{defn}

For a structured exposition, we need a refined version of the operations $\Ro_{(u,v)}$ (which we discussed in Section~\ref{clusteralgebra}, and which are strongly reminiscent of cluster mutations; we shed more light on this in Remark~\ref{recall-Ruv}). We call these refined versions as the \textit{Chevalley operations}. We shall see later that these are related precisely to the Chevalley generators of the invertible totally nonnegative matrices, hence the name. 

\begin{defn}[Chevalley operations]\label{defn:main:2}
Let $1\leq m\leq n$ be integers. 
\begin{enumerate}
\item Suppose $I\subset [m+n]$ has $m$-elements. For consecutive $u,v\in [m+n],$ define
\begin{align*}
I(u,v):=
\begin{cases}
\big{(}I\setminus \{u\}\big{)}\cup\{v\} & \mbox{ if } u \in I \mbox{ and }v\not\in I, \mbox{ and} \\
I & \mbox{ otherwise.}
\end{cases}
\end{align*}

\item Let $\mathcal{I}$ be an ordered finite indexing set, and let $I_{\alpha},J_{\alpha}\subset [m+n]$ be $m$ element subsets for $\alpha \in \mathcal{I}.$ For consecutive $u,v\in [m+n],$ define the Chevalley operation $\So_{(u,v)}$ via
\begin{align*}
&\So_{(u,v)}\Big{(}\big{(}I_{\alpha},J_{\alpha}\big{)}:\alpha \in \mathcal{I} \Big{)}  :=  
\Big{(}\big{(}I_{\alpha}(u,v),J_{\alpha}(u,v)\big{)}:\alpha \in \mathcal{I}(u,v) \Big{)},\\
\mbox{where}\quad &\alpha(u,v):= \{X\in \{I_{\alpha},J_{\alpha}\}: u\in X \mbox{ and }v\not\in X\},\\
\mbox{and}\quad &\mathcal{I}(u,v) := \{\alpha \in \mathcal{I}: |\alpha(u,v)| = \max_{\alpha \in \mathcal{I}} |\alpha(u,v)|\}.
\end{align*}
\end{enumerate}
\end{defn}

A (weaker) version of Chevalley operations appeared in a recent work of Fallat--Vishwakarma \cite{FV}, along with a question re: certain recursive nature of determinantal inequalities for totally nonnegative matrices (see \cite[Question~A]{FV}). Using the (stronger) Chevalley operations in Definition~\ref{defn:main:2}, we completely answer that question for quadratic determinantal inequalities over the TNN Grassmannian (and TNN matrices).

\begin{utheorem}[Classification of \eqref{quad_ineq} via Chevalley operations]\label{L:1}
Let $1\leq m \leq n$ be integers; notations in Definitions~\ref{defn:main:1} and \ref{defn:main:2}. The inequality \eqref{quad_ineq} is valid if and only if for all consecutive $u,v\in [m+n]$
\begin{align}\label{L:1:Eqn:2}
\sum_{\alpha\in \mathcal{I}(u,v)} c_{\alpha} \Delta_{I_{\alpha}(u,v)} \Delta_{J_{\alpha}(u,v)} \geq 0 \quad \mbox{over}\quad\TGr.
\end{align}
This is under the convention that the sum and product over the empty set are $0$ and $1$ respectively. 
\end{utheorem}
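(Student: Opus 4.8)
The plan is to establish the two directions separately, with the "only if" direction being routine and the "if" direction carrying the real content. For the forward implication, suppose \eqref{quad_ineq} holds over $\TGr$. Fix consecutive $u,v\in[m+n]$. I would realize the reduced inequality \eqref{L:1:Eqn:2}, which lives over $\mathrm{Gr}^{\geq 0}(m',m'+n')$ for appropriate $m'\le m$, $n'\le n$, by exhibiting a map that sends a matrix representative $\mathcal{A}'$ of a point in the smaller TNN Grassmannian to a representative $\mathcal{A}$ of a point in $\TGr$ in such a way that $\Delta_{I_\alpha(u,v)}(\mathcal{A}')\Delta_{J_\alpha(u,v)}(\mathcal{A}')$ equals $\Delta_{I_\alpha}(\mathcal{A})\Delta_{J_\alpha}(\mathcal{A})$ up to a common positive scalar, while the terms indexed by $\alpha\notin\mathcal{I}(u,v)$ become negligible. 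Concretely this is done by inserting a Chevalley generator (an elementary Jacobi matrix) acting on rows $u,v$ with parameter $t>0$ and letting $t\to 0$ or $t\to\infty$: the Plücker coordinate $\Delta_K$ of the transformed matrix is a polynomial in $t$ whose lowest- (or highest-) order term records precisely whether $u\in K$, $v\notin K$, i.e. the value $|\alpha(u,v)|$. Dividing by the appropriate power of $t$ and taking the limit kills exactly the non-maximal terms and leaves \eqref{L:1:Eqn:2}; nonnegativity is preserved throughout because Chevalley generators preserve $\TGr$.

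For the converse — assume \eqref{L:1:Eqn:2} holds for every consecutive pair $(u,v)$ and deduce \eqref{quad_ineq} — I would argue by induction on $m+n$ (or on the "size" $\sum_\alpha |I_\alpha \triangle J_\alpha|$, or on the ambient dimension). The base case, where the index sets leave no room for a nontrivial reduction, should force the quadratic expression to be a nonnegative combination of squares of single Plücker coordinates, hence trivially nonnegative. For the inductive step, I would invoke the theorem of Loewner--Whitney--Lusztig--Berenstein--Fomin--Zelevinsky: every representative matrix of a point in $\TGr$ can be obtained as a limit of products of positive diagonal matrices and Chevalley generators $x_i(t)$, $y_i(t)$ acting on consecutive rows. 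By density and continuity it suffices to verify \eqref{quad_ineq} on such products. One peels off the last Chevalley generator $g = x_k(t)$ or $y_k(t)$ from a representative $\mathcal{A} = g\mathcal{A}_0$; the quantity $\sum_\alpha c_\alpha \Delta_{I_\alpha}(g\mathcal{A}_0)\Delta_{J_\alpha}(g\mathcal{A}_0)$ expands, by multilinearity of determinants in the affected rows, as a polynomial in $t$ with nonnegative-degree coefficients, and each coefficient is — after identifying it via the combinatorics of Definition~\ref{defn:main:2} — a specialization of either the original expression on the shorter word $\mathcal{A}_0$ (handled by the inductive hypothesis on word length) or exactly one of the reduced expressions \eqref{L:1:Eqn:2} evaluated on a point of a smaller $\TGr$ (handled by hypothesis), up to positive scalars. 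Nonnegativity of every coefficient in $t\ge 0$ then gives nonnegativity of the whole, completing the induction.

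The main obstacle I anticipate is the precise bookkeeping in the inductive step: showing that when one expands $\Delta_{I_\alpha}(g\mathcal{A}_0)$ and $\Delta_{J_\alpha}(g\mathcal{A}_0)$ and collects powers of $t$, the extremal coefficient is governed exactly by the set $\mathcal{I}(u,v)$ and the transformed index sets $I_\alpha(u,v), J_\alpha(u,v)$ as defined — in particular that the "$\max_{\alpha}|\alpha(u,v)|$" selection rule is forced by which $\alpha$ contribute to the top power of $t$, and that the homogeneity hypothesis ($I_\alpha \Cup J_\alpha$ constant) guarantees all surviving terms carry the same power of $t$ so that no cancellation-by-degree pathology occurs. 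A secondary subtlety is handling the passage from TP to TNN (taking limits, and ensuring the reduced expressions genuinely live over honest smaller Grassmannians rather than degenerate configurations), and making sure the induction is well-founded — i.e. that at least one consecutive pair $(u,v)$ actually produces a strictly smaller instance whenever the instance is nontrivial, which is where the remark that "$(m,m+1)$ is unique to perform this reduction" in the cluster setting must be replaced by an argument that \emph{some} admissible pair always works.
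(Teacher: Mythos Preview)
Your forward direction is right and matches the paper: apply a Chevalley generator $(\I+t\,\El_{u,v})$ to a test matrix, expand $\sum_\alpha c_\alpha\Delta_{I_\alpha}\Delta_{J_\alpha}$ as a polynomial in $t$, and read off the leading coefficient. One correction: the reduced expression \eqref{L:1:Eqn:2} lives over the \emph{same} $\TGr$---the sets $I_\alpha(u,v),J_\alpha(u,v)$ are still $m$-element subsets of $[m+n]$---so no map to or from a smaller Grassmannian is involved here (that reduction is the separate Theorem~\ref{L:2}).

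For the converse, your induction on word length is the correct engine and is exactly what the paper uses: peeling off $g=(\I+w\,\El_{u,v})$ from $\mathcal{B}=g\mathcal{A}_0$ gives, in the essential case, $F_{\mathcal{I}}(\mathcal{B})=F_{\mathcal{I}}(\mathcal{A}_0)+w\,G_{\mathcal{I}(u,v)}(\mathcal{A}_0)$, with the linear term $\ge 0$ directly by hypothesis (over the same Grassmannian) and the constant term handled by shorter word length. The genuine gap is the base case of \emph{this} induction, word length zero, i.e.\ evaluation on a diagonal-type test matrix $\mathcal{A}(\I,\I,D,L)$. That case is not a nonnegative combination of squares as you suggest: only those $\alpha$ with $I_\alpha,J_\alpha$ ``principal'' (each containing exactly one of $k$ and $2m{+}1{-}k$ for every $k$) survive, and they all contribute the \emph{same} positive monomial in the $d_i,\ell_j$, so $F_{\mathcal{I}}$ on such a matrix equals that monomial times $\sum_{\alpha\ \mathrm{principal}}c_\alpha$. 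Showing this particular signed sum of coefficients is $\ge 0$ is the crux, and it does not fall out of any outer induction on $m+n$ or on $\sum_\alpha|I_\alpha\triangle J_\alpha|$ (those never get invoked, since the reduced expressions do not shrink the ambient Grassmannian). The paper extracts it by applying the already-proved forward implication \emph{iteratively} to one of the hypothesized inequalities $G_{\mathcal{I}(u,v)}\ge 0$, along the specific composite Chevalley operation pairing $(m,m{+}1),(m{-}1,m{+}2),\ldots,(1,2m)$, and identifying the resulting scalar with precisely this sum. Without this step your word-length induction has no floor.

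A smaller point you correctly flagged but did not resolve: when $\max_\alpha|\alpha(u,v)|=2$ the $t$-expansion is quadratic and the middle coefficient is neither $F_{\mathcal{I}}$ nor any $G_{\mathcal{I}(u,v)}$. The paper avoids this by first reducing (via the forward implication alone) to the case $m=n$, $I_\alpha\cap J_\alpha=\emptyset$, $I_\alpha\cup J_\alpha=[2m]$, in which $|\alpha(u,v)|\le 1$ for every $\alpha$ and the expansion is always linear in $w$.
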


\begin{remark}
Note that using Theorem~\ref{L:1} in applications is meaningful provided the expression \eqref{L:1:Eqn:2} is different from \eqref{quad_ineq}, i.e., there exists $\alpha \in \In$ such that $(I_{\A},J_{\A})\neq (I_{\A}(u,v),J_{\A}(u,v)).$
\end{remark}

\begin{remark}
The Chevalley operations -- defined over ordered collections of pairs of sets -- naturally extend over to the class of quadratic inequalities \eqref{quad_ineq}. Theorem~\ref{L:1} shows the mechanics of this extension: a quadratic inequality \eqref{quad_ineq} holds if and only if the action of all Chevalley operations on it -- via their action on the corresponding collection of pairs $(I_\A,J_\A)_{\A\in \In}$ -- yields a valid quadratic inequality. More precisely, for all consecutive $u,v\in [m+n],$ over $\TGr$
\begin{align}\label{cheva_op_mech:1}
\sum_{\alpha\in \mathcal{I}} c_{\alpha} \Delta_{I_{\alpha}} \Delta_{J_{\alpha}} \geq 0 ~~ \Longleftrightarrow ~~\So_{(u,v)}\Bigg{(}\sum_{\alpha\in \mathcal{I}} c_{\alpha} \Delta_{I_{\alpha}} \Delta_{J_{\alpha}} \Bigg{)} := \sum_{\alpha\in \mathcal{I}(u,v)} c_{\alpha} \Delta_{I_{\alpha}(u,v)} \Delta_{J_{\alpha}(u,v)} \geq 0.
\end{align}
\end{remark}

This provides a recursive (and efficient) tool to classify quadratic inequalities of the form \eqref{quad_ineq}. Namely, a careful application of the Chevalley operations reduces the problem of classifying quadratic inequalities \eqref{quad_ineq} over a TNN Grassmannian to such classifications over the TNN Grassmannian of a smaller dimension. This reduction requires the following result, which ``simplifies'' a given quadratic inequality.

\begin{utheorem}[Simplification]\label{L:2}
Let $1\leq m \leq n$ be integers; notations as in Definition~\ref{defn:main:1}. Suppose $\eta:=|I_{\alpha}\setminus J_{\alpha}|,$ and let $\Phi: \big{(} I_{\alpha}\setminus J_{\alpha}\big{)} \cup \big{(} J_{\alpha}\setminus I_{\alpha}\big{)} \to [2\eta]$ be the unique order preserving map. Define $K_{\A}:=\Phi\big{(}I_{\A}\setminus J_{\A}\big{)}$ and $L_{\A}:=\Phi\big{(}J_{\A}\setminus I_{\A}\big{)}$ for $\alpha \in \In.$ Then inequality~\eqref{quad_ineq} is valid if and only if 
\begin{align}\label{L:2:Eqn2}
\sum_{\alpha\in \mathcal{I}} c_{\alpha} \Delta_{K_{\alpha}} \Delta_{L_{\alpha}} \geq 0 \quad\mbox{over}\quad \Gr^{\geq 0}(\eta,2\eta).
\end{align}
\end{utheorem}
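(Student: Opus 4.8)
The plan is to prove Theorem~\ref{L:2} by peeling off one index of $[m+n]$ at a time. The homogeneity hypothesis is what makes this possible: since the multiset $I_\alpha\Cup J_\alpha$ is the same for every $\alpha$, the common part $C:=I_\alpha\cap J_\alpha$ (the elements of multiplicity $2$) and the symmetric-difference support $S:=(I_\alpha\setminus J_\alpha)\cup(J_\alpha\setminus I_\alpha)$ (the elements of multiplicity $1$) are both independent of $\alpha$; write $Z:=[m+n]\setminus(C\cup S)$ for the elements of multiplicity $0$. One has $|S|=2\eta$, and stripping off all of $C$ and all of $Z$ leaves the ground set $S$ together with precisely the order-preserving relabeling $\Phi$ of the statement, the index sets becoming $K_\alpha,L_\alpha$. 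So it suffices to establish the two one-step reductions below and then induct on $|C|+|Z|$; the base case $C=Z=\emptyset$ makes the ``simplified'' inequality literally equal to \eqref{quad_ineq}.

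\emph{Removing $w\in Z$.} Here $w$ occurs in no $I_\alpha$ or $J_\alpha$, so the quadratic expression in \eqref{quad_ineq} depends only on the rows of $\mathcal A$ other than $w$. Given a representative $\mathcal A$ of a point of $\mathrm{Gr}^{\geq 0}(m,m+n)$, I would delete row $w$ to obtain $\mathcal A''$ of size $(m+n-1)\times m$; every maximal minor of $\mathcal A''$ equals a (nonnegative) maximal minor of $\mathcal A$, so either $\mathcal A''$ has full rank and represents a point of $\mathrm{Gr}^{\geq 0}(m,m+n-1)$, or $\mathcal A''$ is rank-deficient and every $\Delta_{I_\alpha}(\mathcal A)$ vanishes, making the inequality trivial. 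Conversely, inserting a zero row at position $w$ into a representative of $\mathrm{Gr}^{\geq 0}(m,m+n-1)$ gives a full-rank matrix with all Pl\"ucker coordinates nonnegative, hence a point of $\mathrm{Gr}^{\geq 0}(m,m+n)$. In both directions the two expressions coincide, giving the equivalence for this step.

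\emph{Removing $w\in C$.} Now $w\in I_\alpha\cap J_\alpha$ for every $\alpha$. Fix a representative $\mathcal A$: if its row $w$ vanishes then every $\Delta_{I_\alpha}(\mathcal A)=0$ and the inequality is trivial, so I assume row $w\neq 0$ and, by column operations that preserve the signs of all Pl\"ucker coordinates, normalize row $w$ to $(a_1,0,\dots,0)$ with $a_1\neq 0$. Let $\mathcal B$ be $\mathcal A$ with row $w$ and column $1$ removed, let $\Phi_w$ be the order-preserving relabeling of $[m+n]\setminus\{w\}$, and let $\mathcal B'$ be $\mathcal B$ with every row coming from an original index $<w$ negated. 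Expanding $\det\mathcal A_{R\cup\{w\},[m]}$ along row $w$ for an $(m-1)$-subset $R\subseteq[m+n]\setminus\{w\}$ produces the sign $(-1)^{1+|\{r\in R:\ r<w\}|+1}$, which is exactly cancelled by the sign $(-1)^{|\{r\in R:\ r<w\}|}$ contributed by the row negations; hence $\Delta_{R\cup\{w\}}(\mathcal A)=a_1\,\Delta_{\Phi_w(R)}(\mathcal B')$ for every such $R$. In particular all maximal minors of $\mathcal B'$ carry the single sign $\sgn(a_1)$, so $\mathcal B'$ (or $\mathcal B'$ with its first column negated when $a_1<0$) represents a point of $\mathrm{Gr}^{\geq 0}(m-1,m+n-1)$, and since $w\in I_\alpha\cap J_\alpha$,
\[
\sum_{\alpha\in\mathcal I}c_\alpha\,\Delta_{I_\alpha}(\mathcal A)\,\Delta_{J_\alpha}(\mathcal A)\;=\;a_1^{2}\sum_{\alpha\in\mathcal I}c_\alpha\,\Delta_{\Phi_w(I_\alpha\setminus\{w\})}(\mathcal B')\,\Delta_{\Phi_w(J_\alpha\setminus\{w\})}(\mathcal B').
\]
Thus validity passes down. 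The converse is the reverse construction: starting from a representative $\mathcal B'$ of $\mathrm{Gr}^{\geq 0}(m-1,m+n-1)$, undo the row negations, then insert a new column equal to the standard basis vector supported at $w$ and a new row $w=(1,0,\dots,0)$; the resulting matrix lies in $\mathrm{Gr}^{\geq 0}(m,m+n)$ and the displayed identity again transfers the inequality.

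\emph{Main obstacle.} The hard part will be the bookkeeping in the last step: pinning down the precise row sign-twist that simultaneously makes $\mathcal B'$ totally nonnegative and cancels the Laplace-expansion sign, so that no $\alpha$-dependent sign survives in the displayed identity, and disposing of the degenerate sub-cases (a zero row $w$; a rank-deficient deletion; a pivot not in the first column or of negative sign, handled by column swaps or negations applied in pairs). Homogeneity does not enter the sign computation itself -- which holds for every maximal minor -- but it is essential upstream: it is what guarantees that $C$, and hence the index we are allowed to strip uniformly over all $\alpha$, is well-defined.
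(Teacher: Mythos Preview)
Your approach is correct and takes a genuinely different route from the paper's. The paper proves Theorem~\ref{L:2} by first applying a long composite of Chevalley operations (via Lemma~\ref{L:1:weakform}, a weak form of Theorem~\ref{L:1}) to shuffle the common indices $C$ to the leftmost positions and push the unused indices $Z$ to the rightmost, then translating into a TNN-matrix inequality via Theorem~\ref{Ineq-equivalence}, invoking the Loewner--Whitney factorization (Theorem~\ref{TN-classification}) to split off the common diagonal block, and finally translating back. So the paper's argument is wired through the rest of its machinery and in particular depends on (the forward half of) Theorem~\ref{L:1}. Your argument, by contrast, is a direct, self-contained matrix manipulation: delete an unused row, or delete a common row together with its pivot column after negating the rows above it so that the Laplace-expansion sign is absorbed uniformly. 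This makes Theorem~\ref{L:2} logically independent of Theorem~\ref{L:1} and of the TNN-matrix/Grassmannian dictionary, which is a genuine gain in modularity; the only cost is that it does not illustrate the Chevalley-operation theme the paper is built around.

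Two small points to tighten. First, your labeling of the two implications is swapped: the construction $\mathcal A\mapsto\mathcal B'$ (together with the identity $\sum c_\alpha\Delta_{I_\alpha}(\mathcal A)\Delta_{J_\alpha}(\mathcal A)=a_1^2\sum c_\alpha\Delta_{\Phi_w(I_\alpha\setminus\{w\})}(\mathcal B')\Delta_{\Phi_w(J_\alpha\setminus\{w\})}(\mathcal B')$) directly yields \emph{small $\Rightarrow$ big}, since $\mathcal A$ is arbitrary; it is the reverse construction $\mathcal B'\mapsto\mathcal A$ (insert the column $e_w$ and the row $w$) that gives \emph{big $\Rightarrow$ small}, because every $\mathcal B'$ then arises from some $\mathcal A$. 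You have both pieces, so the proof is complete, but the phrases ``validity passes down'' and ``the converse'' are attached to the wrong halves. Second, to keep the paper's standing hypothesis $m'\le n'$ at each intermediate stage of the induction, simply strip all of $C$ first (each step takes $(m,n)\to(m-1,n)$) and then all of $Z$ (each step takes $(m,n)\to(m,n-1)$); this lands at $(\eta,\eta)$ without ever violating $m'\le n'$. Your sign computation for $w\in C$ is exactly right and is the crux of that step.
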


Theorem~\ref{L:2} removes all the redundacies, like the indices that are common in $I_\A$ and $J_\A$ and hence do not contribute towards the validity of \eqref{quad_ineq}, and yields an equivalent expression over the relevant TNN Grassmannian. One of the main ideas in this paper is a careful and natural combination of Theorems~\ref{L:1} and \ref{L:2}, which we encapsulate as an algorithm.

\begin{ualgo}\label{algo}
Suppose an expression \eqref{quad_ineq} is given that we wish to verify over $\TGr.$
\begin{enumerate}
\item Apply Theorem~\ref{L:2} to remove all the redundancies and obtain a simplified form of the given quadratic expression. This gives an expression of type \eqref{L:2:Eqn2} over $\Gr^{\geq 0}(\eta,2\eta)$.

\item Then apply Theorem~\ref{L:1} for any consecutive $u,v\in [2\eta]$ to obtain a quadratic expression of type \eqref{L:1:Eqn:2} over $\Gr^{\geq 0}(\eta,2\eta).$ In this new expression, $v\in K_\A\cap L_\A$ and $u\not\in K_\A \cup L_\A,$ for all $\A\in \In{(u,v)}.$

\item Next, one applies Theorem~\ref{L:2} to the expression obtained in the previous step. Since -- $v\in K_\A\cap L_\A$ and $u\not\in K_\A \cup L_\A,$ for all $\A\in \In{(u,v)}$ -- this yields an equivalent quadratic expression over the TNN Grassmannian of the smaller dimension $\Gr^{\geq 0}(\eta-1,2\eta -2).$

\item In the final step to prove that the given quadratic inequality \eqref{quad_ineq} is valid, one needs to verify that the quadratic expression obtained in Step (3) is valid over $\Gr^{\geq 0}(\eta-1,2\eta -2),$ for all consecutive $u,v\in [2\eta]$ (chosen in Step (2)).
\end{enumerate}
\end{ualgo}

\begin{remark}[Cluster mutations and Chevalley operations]\label{recall-Ruv}

We believe it would be apparent that Step (2) to Step (4) in Algorithm~\ref{algo} is precisely how $\Ro_{(u,v)}$ acts \textit{together} on $\big{(}K_\A\big{)}_{\A\in \In}$ and $\big{(}L_\A\big{)}_{\A\in \In},$ and hence on the expression \eqref{L:2:Eqn2}, thus demonstrating that the ``application part'' of the Chevalley operations is parallel to the action of the sequence $\Ro$ of cluster mutations over the initial cluster \eqref{initial-cluster}.
\end{remark}

Sometimes the given quadratic inequality \eqref{quad_ineq} for $m=n=\eta$ involves only a certain type of Pl\"ucker coordinates $\Delta_{I_\A}$ and $\Delta_{J_\A}.$ More precisely, we are referring to the ones corresponding to principal minors in totally nonnegative matrices; we define these Pl\"ucker coordinates.

\begin{defn}[Pl\"ucker coordinates for principal minors]\label{defn:main:3}
Suppose $\eta \geq 1$ is an integer. The required Pl\"ucker coordinates (that correspond to the principal minors of $\eta \times \eta$ matrices) are given by the collection: 
\[
\In_{\eta}:=\big{\{}I:=P\cup \big{(} [\eta+1,2\eta]\setminus (2\eta+1-P) \big{)}: P\subseteq [\eta] \big{\}}.
\]
To address the required composition of the Chevalley operations (in the next result) we also define:
\[
u^*:=2\eta+1-u \quad \mbox{for all}\quad u\in [2\eta]. 
\]
(We shall later see that these elements of $\mathcal{I}_\eta$ correspond precisely to the determinantal inequalities over $\eta \times \eta$ totally nonnegative matrices involving only the \textit{principal submatrices} -- see \eqref{Plucker-embed}.)
\end{defn}

In light of Algorithm~\ref{algo}, an immediate question is to refine it for quadratic inequalities over $\Gr^{\geq 0}(\eta,2\eta)$ that involve Pl\"ucker coordinates from $\In_\eta$ so that the inequality obtained in Step (3) of Algorithm~\ref{algo} involves Pl\"ucker coordinates from $\In_{\eta'}$ for $\eta'<\eta.$ Our next result (with Theorem~\ref{L:2}) yields this refinement.

\begin{utheorem}\label{L:3}
Let $\eta \geq 1$ be an integer, and suppose $\mathcal{I}$ is an ordered finite indexing set. Suppose $I_{\alpha},J_{\alpha}\in \In_\eta$ are such that the multisets $I_\alpha \Cup J_\alpha$ are equal for all $\alpha \in \In$. Then for $c_{\alpha}\in \R,$
\begin{align}\label{L:3:Eqn:1}
\sum_{\alpha\in \mathcal{I}} c_{\alpha} \Delta_{I_{\alpha}} \Delta_{J_{\alpha}} \geq 0 \quad\mbox{over}\quad \Gr^{\geq 0}(\eta,2\eta)
\end{align}
if and only if the composite operation $\So_{(v^*,u^*)} \circ \So_{(u,v)}$ applied on \eqref{L:3:Eqn:1} via \eqref{cheva_op_mech:1} yields a valid inequality for all consecutive $u,v\in [\eta+1].$ That is, if $\mathcal{J}:=\big{(}\mathcal{I}(u,v)\big{)}(v^*,u^*),$ $K_\A:=\big{(}I_\A(u,v)\big{)}(v^*,u^*),$ and $L_\A:=\big{(}J_\A(u,v)\big{)}(v^*,u^*),$ for all $\A\in \mathcal{J},$ then \eqref{L:3:Eqn:1} is valid if and only if 
\begin{align}\label{L:3:Eqn:2}
\sum_{\alpha\in \mathcal{J}} c_{\alpha} \Delta_{K_{\alpha}} \Delta_{L_{\alpha}} \geq 0 \quad\mbox{over} \quad \Gr^{\geq 0}(\eta,2\eta).
\end{align}
Moreover, $\mathcal{J}=\mathcal{I}(u,v),$ and $K_\A,L_\A \in \In_{\eta}$ for all $\A\in \mathcal{J}.$
\end{utheorem}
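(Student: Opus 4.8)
The plan is to combine Theorems~\ref{L:1} and \ref{L:2} carefully with the special arithmetic of the index set $\In_\eta$. First I would unpack what membership in $\In_\eta$ means symmetrically: $I\in\In_\eta$ iff for every $w\in[2\eta]$ exactly one of $w,w^*$ lies in $I$ (equivalently $I$ is a transversal of the pairing $w\leftrightarrow w^*$). With this reformulation, the homogeneity hypothesis that $I_\alpha\Cup J_\alpha$ is constant translates into: the multiset $(I_\alpha\setminus J_\alpha)\cup(J_\alpha\setminus I_\alpha)$ is some fixed set $D$ (with multiplicities), and crucially $D$ is $*$-stable, i.e.\ $w\in D\iff w^*\in D$, because swapping $I$ and $J$ on the symmetric-difference part of a transversal is the same as applying the involution $*$ coordinatewise on that part. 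This $*$-stability is the structural fact that makes the \emph{composite} $\So_{(v^*,u^*)}\circ\So_{(u,v)}$, rather than a single Chevalley move, the natural operation here.

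Next I would run Step~(2) of Algorithm~\ref{algo}: apply $\So_{(u,v)}$ for consecutive $u,v$. By the discussion in Algorithm~\ref{algo}, in the resulting expression one has $v\in K_\alpha\cap L_\alpha$ and $u\notin K_\alpha\cup L_\alpha$ for all surviving $\alpha\in\In(u,v)$. Here I must choose $u<v$ so that after the move the ``filled'' coordinate $v$ and the ``emptied'' coordinate $u$ are both consumed; since I want to stay inside the principal-minor world I should track the images of $u^*,v^*$ as well. The key observation is that $\So_{(u,v)}$ only alters coordinates in $\{u,\dots,v\}$, and because $u,v$ are chosen consecutive in $[\eta+1]$ (so on the ``left half''), the involution-partners $u^*,v^*$ lie in the ``right half'' and are untouched; hence after $\So_{(u,v)}$ we still have, for each surviving index, exactly one of $v^*,v$ present and exactly one of $u^*,u$ present — but now $v$ is in both sets and $u$ in neither, which forces $v^*\notin K_\alpha\cap L_\alpha$-complement appropriately and $u^*\in$ both; precisely, $v^*\in K_\alpha\cap L_\alpha$ and $u^*\notin K_\alpha\cup L_\alpha$ is \emph{false} — rather one gets $u^*\in K_\alpha\cap L_\alpha$ and $v^*\notin K_\alpha\cup L_\alpha$. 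That is exactly the configuration on which $\So_{(v^*,u^*)}$ (note the order: the now-present coordinate $u^*$ moves down to the now-absent $v^*$, and $v^*<u^*$ since $v>u$) acts non-trivially and \emph{on every surviving index simultaneously}, so the second move does not shrink the index set further: $\mathcal{J}=\big(\In(u,v)\big)(v^*,u^*)=\In(u,v)$, giving the ``Moreover'' clause on indices.

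Then I would verify $K_\alpha,L_\alpha\in\In_\eta$. After $\So_{(u,v)}$ the pair $(u,v)$ is ``collapsed'': $u$ absent, $v$ present in both sets — so the transversal condition holds for the pair $\{u,v\}$ trivially but is \emph{broken} for $\{u^*,v^*\}$ (both would be accounted for by $u^*$). Applying $\So_{(v^*,u^*)}$ moves the $u^*$ down to $v^*$, restoring: now for $\{u^*,v^*\}$ the set contains $v^*$ in both and $u^*$ in neither, matching the $\{u,v\}$ pattern, and all other pairs $\{w,w^*\}$ were never touched. Hence every surviving $K_\alpha$ (resp.\ $L_\alpha$) is again a transversal of the pairing, i.e.\ lies in $\In_\eta$ — but note this is $\In_\eta$ for the \emph{same} $\eta$, because a Chevalley move $I(p,q)$ for consecutive $p,q$ keeps $|I|=m$ and the ambient set $[2\eta]$; the two moves together do \emph{not} drop dimension, they reorganize within $\Gr^{\geq0}(\eta,2\eta)$. (Dimension reduction to $\Gr^{\geq0}(\eta-1,2\eta-2)$ is what the subsequent Step~(3) of Algorithm~\ref{algo}, i.e.\ Theorem~\ref{L:2}, would then do; it is not claimed in Theorem~\ref{L:3}.) Finally, the equivalence ``\eqref{L:3:Eqn:1} valid $\iff$ \eqref{L:3:Eqn:2} valid'' follows by applying Theorem~\ref{L:1} twice: once for $(u,v)$ and once for $(v^*,u^*)$, using that each individual Chevalley move preserves validity via \eqref{cheva_op_mech:1}; one just has to check that $(v^*,u^*)$ is a legitimate (consecutive) pair for the intermediate expression, which holds because $u,v$ consecutive in $[\eta+1]$ forces $u^*,v^*$ consecutive in $[\eta,2\eta]$, and the intermediate index sets only involve coordinates outside $(u,v)$ in a way that leaves $(v^*,u^*)$ consecutive.

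The main obstacle I anticipate is the bookkeeping in the middle paragraph: correctly tracking, through the first move $\So_{(u,v)}$, what happens to the partner coordinates $u^*,v^*$ and the precise present/absent pattern, so as to (i) confirm the second move is $\So_{(v^*,u^*)}$ with that exact index order, (ii) confirm it acts non-trivially on \emph{all} surviving indices — which is what makes $\mathcal{J}=\In(u,v)$ and is the crux of the ``Moreover'' — and (iii) confirm the $\In_\eta$-membership is restored rather than merely preserved. A subtle point is that the definition of $\In(u,v)$ via the maximum of $|\alpha(u,v)|$ could, in principle, select indices where the move is trivial; I would need to argue that after simplification (the homogeneity/$*$-stability of $D$) the maximum value of $|\alpha(u,v)|$ is actually $1$ (one of $I_\alpha,J_\alpha$ contains $u$ but not $v$, the other the reverse), so every surviving index is genuinely moved — and symmetrically for the second step. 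Getting these parity/transversal invariants stated cleanly up front is what will make the rest routine.
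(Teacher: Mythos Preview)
Your bookkeeping for the ``Moreover'' clause is essentially on the right track: the observation that $I\in\In_\eta$ iff $I$ is a transversal of the pairing $w\leftrightarrow w^*$, and the computation that for $u,v\in[\eta]$ consecutive one has $X\in\alpha(u,v)$ iff $X\in\alpha(v^*,u^*)$ (the latter computed after the first move), do yield $\mathcal{J}=\In(u,v)$ and $K_\alpha,L_\alpha\in\In_\eta$. The forward implication \eqref{L:3:Eqn:1}$\Rightarrow$\eqref{L:3:Eqn:2} is also fine: it is two applications of the forward direction of Theorem~\ref{L:1}.

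The genuine gap is the converse. You write that the equivalence ``follows by applying Theorem~\ref{L:1} twice: once for $(u,v)$ and once for $(v^*,u^*)$, using that each individual Chevalley move preserves validity via \eqref{cheva_op_mech:1}.'' But \eqref{cheva_op_mech:1} is \emph{not} a single-move biconditional. Theorem~\ref{L:1} says \eqref{quad_ineq} is valid iff the moved expression is valid \emph{for all} consecutive $u,v\in[2\eta]$; for a single fixed pair only the forward direction holds (a move can drop terms via the $\max$ in the definition of $\In(u,v)$, producing a valid expression from an invalid one). Theorem~\ref{L:3} asserts that for principal-minor data it suffices to test only the $\eta{+}1$ composites $\So_{(v^*,u^*)}\circ\So_{(u,v)}$ with $u,v$ consecutive in $[\eta{+}1]$, rather than all single moves over $[2\eta]$. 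That reduction is precisely the nontrivial content, and your argument does not supply it.

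The paper's proof of the converse proceeds by contradiction. Assuming \eqref{L:3:Eqn:1} fails, it invokes the mechanism behind Theorem~\ref{Main-thm} (via Theorems~\ref{L:1} and \ref{L:2}) to produce a partition $P=\{\{u_j,v_j\}\}$ of $(I_\alpha\cup J_\alpha)\setminus(I_\alpha\cap J_\alpha)$, compatible with a $321$-avoiding permutation, for which $\sum_{\alpha\in\In(P)}c_\alpha<0$. Then it uses the $*$-symmetry of $\In_\eta$ to reorganize $P$ so that its first two parts are $\{u_1,v_1\}$ and $\{u_1^*,v_1^*\}$: either these already occur in $P$ (Case~1), or one splices two parts of $P$ meeting $u_1^*,v_1^*$ into the required form without changing the sum (Case~2, reducing to Case~1). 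The resulting negative sum then contradicts the assumed validity of \eqref{L:3:Eqn:2} for $(u,v)=(u_1,v_1)$. This $*$-symmetric rearrangement of a failure witness is the missing idea in your proposal.
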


\begin{remark}
Theorem~\ref{L:3} asserts that an expression of the form \eqref{L:3:Eqn:1} is valid if and only if the action of composite Chevalley operations $\So_{(v^*,u^*)}\circ \So_{(u,v)}$ over it yields valid inequalities. More precisely, $\sum_{\alpha\in \mathcal{I}} c_{\alpha} \Delta_{I_{\alpha}} \Delta_{J_{\alpha}} \geq 0$ over $\Gr^{\geq 0}(\eta,2\eta)$ if and only if
\begin{align*}
\So_{(v^*,u^*)}\circ \So_{(u,v)}\Bigg{(}\sum_{\alpha\in \mathcal{I}} c_{\alpha} \Delta_{I_{\alpha}} \Delta_{J_{\alpha}} \Bigg{)} = \sum_{\alpha\in 
\big{(}\mathcal{I}(u,v)\big{)}(v^*,u^*)} c_{\alpha} \Delta_{\big{(}I_{\alpha}(u,v)\big{)}(v^*,u^*)} \Delta_{\big{(}J_{\alpha}(u,v)\big{)}(v^*,u^*)} \geq 0
\end{align*}
over $\Gr^{\geq 0}(\eta,2\eta),$ for all consecutive $u,v\in [\eta+1].$ One may notice a striking difference between Theorem~\ref{L:1} (for $m=n=\eta$) and Theorem~\ref{L:3}. The $m=n=\eta$ version of Theorem~\ref{L:1} states that there are exactly $2\eta - 1$ quadratic expressions (corresponding to each consecutive pair $u,v\in [2\eta]$) that one needs to verify to validate the given quadratic expression. However, if the given quadratic expression involves only the Pl\"ucker coordinates corresponding to the principal minors (Definition~\ref{defn:main:3}) then Theorem~\ref{L:3} states that there are only $\eta+1$ quadratic expressions to verify for the validation. This results in the following refinement of Algorithm~\ref{algo}.
\end{remark}

\begin{ualgo}\label{algo1}
Suppose the quadratic expression over $\Gr^{\geq}(\eta,2\eta)$ obtained in Step (1) of Algorithm~\ref{algo} only involves Pl\"ucker coordinates from $\In_{\eta}.$
\begin{enumerate}
    \item[($2'$)] Then apply Theorem~\ref{L:3} for any consecutive $u,v\in [\eta+1]$ to obtain a quadratic expression \eqref{L:3:Eqn:2}. In this new expression, $v,u^*\in K_\A\cap L_\A$ and $u,v^*\not\in K_\A \cup L_\A,$ for all $\A\in \In{(u,v)}.$
    \item[($3'$)] Since $v,u^*\in K_\A\cap L_\A$ and $u,v^*\not\in K_\A \cup L_\A,$ for all $\A\in \In{(u,v)}$ in the obtained expression, applying Theorem~\ref{L:2} over it yields a quadratic expression either over 
    \begin{align*}
\begin{cases}
    \Gr^{\geq 0}(\eta-1,2\eta-2) \mbox{ with all Pl\"ucker coordinates from $\In_{\eta-1}$} & \mbox{ if }\{u,v\}=\{\eta,\eta+1\}, \mbox{ and}\\
    \Gr^{\geq 0}(\eta-2,2\eta-4) \mbox{ with all Pl\"ucker coordinates from $\In_{\eta-2}$} & \mbox{ otherwise.}
\end{cases}
    \end{align*}
    \item[($4'$)] And finally, similar to the last step in Algorithm~\ref{algo}, one needs to verify if the quadratic expression obtained in step ($3'$) is valid, but this time, for all consecutive $u,v\in [\eta+1],$ unlike the general case in Algorithm~\ref{algo} where one had to run over all consecutive $u,v\in [2\eta].$
\end{enumerate}
\end{ualgo}

Theorem~\ref{L:3} (and so Algorithm~\ref{algo1}) leads to a novel result for quadratic inequalities of type \eqref{L:3:Eqn:1}. This involves a certificate via nonnegativity of sums of coefficients $c_\A,$ $\A\in \In,$ corresponding to the \textit{321-avoiding involutions} in $\mathrm{S}_\eta$ (which can be identified with a subclass of the \textit{Temperley--Lieb immanants}). We shall discuss this in Subsection~\ref{TLimm}. As we mentioned earlier, this is followed by two other applications: a new proof of the log-supermodularity of Pl\"ucker coordinates, and a novel proof of the Barrett--Johnson inequality for TNN matrices. All of these applications are based on the structure of the Chevalley operations, a reminiscent of cluster mutations.

\section{Applications of Chevalley operations}\label{applications}

In this section we demonstrate connections and applicability of Chevalley operations with other well-known notions and results in total positivity, beginning with the Temperley--Lieb immanants.

\subsection{Sums over $321$-avoiding permutations \& involutions}\label{TLimm}

The classification of quadratic inequalities \eqref{quad_ineq} can also be obtained via the Temperley--Lieb immanants idea of Rhoades and Skandera \cite{RSkanTLImmp} -- see Soskin--Vishwakarma \cite{SV} for the required reformulation. In this section we present the connection between Temperley--Lieb immanants and our classification of \eqref{quad_ineq} via Chevalley operations. We recall Temperley--Lieb immanants and how they are fundamental in classifying quadratic inequalities \eqref{quad_ineq}.

Recall that a real polynomial $p({\bf x})$ in matrix entries ${\bf x}=(x_{ij})$ is called \emph{totally nonnegative (TNN)} provided $p({\bf x})\geq 0$ whenever ${\bf x}=(x_{ij})$ is a totally nonnegative matrix. Littlewood \cite{LittlewoodTGC} and Stanley \cite{StanPos} introduced the notion of immanants: suppose $f: \sn \rightarrow \mathbb C,$ and define \emph{$f$-immanant} to be the polynomial
\begin{equation}\label{eq:immdef}
\imm_f({\bf x}) \defeq \sum_{w \in \sn} f(w) \permmon xw \in \mathbb C[{\bf x}].
\end{equation}
Fix $\xi\in \C$; consider the Temperley--Lieb algebra $T_n(\xi)$ over $\C$ generated by $t_1,\dotsc,t_{n-1}$ subject to:
\begin{alignat*}{2}
t_i^2 &= \xi t_i, &\qquad &\text{for } i=1,\dotsc,n-1, \\
t_i t_j t_i &= t_i,   &\qquad &\text{if }  |i-j|=1,\\
t_i t_j &= t_j t_i,   &\qquad &\text{if }  |i-j| \geq 2.
\end{alignat*}
(This is also defined as the quotient of the Hecke algebra $H_n(q).$) The ``monoid'' $\K_n$ generated by $t_1,\dots,t_{n-1}$ subject to the aforementioned relations forms the standard basis of $T_n(\xi).$ One of the ways to see these basis elements is to look at all 321-avoiding permutation in $\sn$ and replace each $s_i\leftrightarrow t_i.$ This identification leads to their identification with Kauffman diagrams. A \textit{Kauffman diagram} is a matching on a $2n$-cycle such that the edges do not intersect and lie inside the convex hull generated by the vertices of the $2n$-cycle.

Now consider the isomorphism $T_n(2) \cong \csn/(1 + s_1 + s_2 + s_1s_2 + s_2s_1 + s_1s_2s_1)$ \cite{FanMon,GHJ,WestburyTL} via
\begin{equation}\label{eq:sntotn}
\begin{aligned}
\sigma : \csn \rightarrow \tn ~\mbox{ with }~ s_i \xmapsto{\sigma} t_i - 1.
\end{aligned}
\end{equation}
Define the following function corresponding to each basis element in $\tau \in \K_n,$
\begin{equation}\label{eq:ftau}
\begin{aligned}
f_\tau: \sn \rightarrow \mathbb{C} ~\mbox{ with }~ w \mapsto \text{ coefficient of $\tau$ in } \sigma(w),
\end{aligned}
\end{equation}
and extend it linearly over $\csn$. Finally, define the Temperley--Lieb immanants as 
\begin{equation*}
\imm_{\tau}({\bf x}):=\imm_{f_\tau}({\bf x}) 
= \sum_{w \in \sn} f_{\tau}(w){x}_{1,w_1} \cdots {x}_{n,w_n}, \quad \mbox{where}\quad {\bf x}=(x_{ij})_{i,j=1}^{n}.
\end{equation*}
Rhoades--Skandera \cite{RSkanTLImmp} showed that Temperley--Lieb immanants are a basis of the space 
\begin{equation}\label{eq:tlspace}
\spn_{\mathbb R} \{ \det {\bf x}_{P,Q}  \det {\bf x}_{P^c,Q^c}  \,|\, P,Q \subseteq [n] \mbox{ with }|P|=|Q| \}
\end{equation} 
and that they are TNN. In fact, these are the extreme rays of the cone of TNN immanants.
\begin{thm}[Rhoades--Skandera \cite{RSkanTLImmp}]\label{SB}
Given a function $f:\sn \to \R,$ the immanant
\begin{equation}\label{eq:sumofprodsof2minors}
\imm_f({\bf x}) = \sumsb{P,Q \subseteq [n]\\ |P|=|Q|} c_{P,Q}
\det {\bf x}_{P,Q} \det {\bf x}_{P^c, Q^c} 
\end{equation}
is TNN if and only if it is a nonnegative linear combination of Temperley--Lieb immanants. Moreover, each $\det {\bf x}_{P,Q} \det {\bf x}_{P^c, Q^c}$ is the sum of Temperley--Lieb immanants that correspond to Kauffman diagrams with edges connecting elements of $P\cup Q^{c}$ and $P^c\cup Q$ on the $2n$-cycle.
\end{thm}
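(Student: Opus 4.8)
The plan is to treat the two assertions in turn, proving the structural ``moreover'' statement first since the cone characterization rests on it. Via the immanant formalism \eqref{eq:immdef} and the isomorphism $\sigma$ of \eqref{eq:sntotn}, the polynomial $\det {\bf x}_{P,Q}\det {\bf x}_{P^c,Q^c}$ is $\imm_g$ for an explicit $g\in\csn$, namely the signed sum over the coset of the Young (parabolic) subgroup $S_P\times S_{P^c}$ that carries $P$ to $Q$ and $P^c$ to $Q^c$. Pushing $g$ through $\sigma$, the claim becomes a finite statement inside $T_n(2)$: expand $\sigma(g)$ in the monomial basis $\K_n$ and show every coefficient lies in $\{0,1\}$, with support exactly the Kauffman diagrams whose arcs separate $P\cup Q^c$ from $P^c\cup Q$ on the $2n$-cycle. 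I would prove this by induction on the number of generators involved, repeatedly applying $t_i^2=2t_i$ and $t_it_{i\pm 1}t_i=t_i$ while tracking the planar ``cups and caps'' picture of the element; for these particular product elements the reductions never create a coefficient outside $\{0,1\}$ because the noncrossing structure of the caps is preserved at each step.

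For the equivalence, the $\Leftarrow$ direction requires that each $\imm_\tau$ is itself TNN. I would obtain this from the Lindström--Gessel--Viennot lemma: every $n\times n$ TNN matrix is a limit of weight matrices of planar acyclic networks, and $\imm_\tau({\bf x})$ then counts, with positive weights, families of vertex-disjoint paths whose induced boundary matching is compatible with $\tau$, so $\imm_\tau\ge 0$; the ``moreover'' part supplies the bookkeeping that makes this compatibility precise. For the $\Rightarrow$ direction, since the $\imm_\tau$ span the space \eqref{eq:tlspace} one may write $\imm_f=\sum_\tau d_\tau\imm_\tau$, and it remains to show $\imm_f$ TNN forces $d_\tau\ge 0$ for each $\tau$. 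I would construct, for each Kauffman diagram $\tau$, a planar network whose weight matrix $M_\tau$ is TNN and ``dual'' to $\tau$, in the sense that $\imm_{\tau'}(M_\tau)=0$ for $\tau'\ne\tau$ while $\imm_\tau(M_\tau)>0$; evaluating the TNN polynomial $\imm_f$ at $M_\tau$ then gives $d_\tau\cdot\imm_\tau(M_\tau)\ge 0$, hence $d_\tau\ge 0$. Alternatively, and in the spirit of the present paper, the equivalence can be deduced from Theorems~\ref{L:1} and \ref{L:2}: under the standard Pl\"ucker embedding the inequality $\imm_f\ge 0$ over $n\times n$ TNN matrices is a homogeneous quadratic inequality of the form \eqref{quad_ineq} over $\Gr^{\geq 0}(n,2n)$ with Pl\"ucker coordinates from $\In_n$, and Theorems~\ref{L:2}, \ref{L:1}, \ref{L:3} recursively reduce its validity to validity over Grassmannians of strictly smaller dimension under all Chevalley operations; an induction on $n$ then presents the valid inequalities as a polyhedral cone whose extreme rays one identifies with the $\imm_\tau$ via the Soskin--Vishwakarma \cite{SV} dictionary between Chevalley-reduced Pl\"ucker monomials and Kauffman diagrams.

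The main obstacle is the $\Rightarrow$ direction. In the first route it is the construction of the dual networks $M_\tau$ realizing the coefficient functionals $f\mapsto d_\tau$ as point evaluations at TNN matrices, equivalently showing that the dual cone of the TNN-immanant cone is generated by such evaluations; this amounts to choosing path weights so that exactly one nonintersecting path family survives. In the Chevalley-operations route the obstacle shifts to making the combinatorial dictionary exact: one must verify that each $\So_{(u,v)}$, and the composites $\So_{(v^*,u^*)}\circ\So_{(u,v)}$ of Theorem~\ref{L:3}, act on Kauffman diagrams compatibly with the boundary-separation description, so that the induction both closes and pins the extreme rays down to precisely the $\imm_\tau$ and nothing more. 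The finite computation inside $T_n(2)$ underlying the ``moreover'' part, though elementary, is itself somewhat delicate and is the common prerequisite for either route.
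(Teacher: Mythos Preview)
The paper does not prove Theorem~\ref{SB}: it is stated as a cited result of Rhoades--Skandera \cite{RSkanTLImmp}, with no proof given. So there is no ``paper's own proof'' to compare against. Your first route (compute $\sigma(g)$ in $T_n(2)$ for the ``moreover'' clause; LGV for $\Leftarrow$; dual planar networks $M_\tau$ realizing the extremal functionals for $\Rightarrow$) is in fact a faithful outline of the original Rhoades--Skandera argument, and the obstacles you flag (the careful tracking of coefficients in the TL reduction, and the construction of the separating networks $M_\tau$) are exactly where the work lies in \cite{RSkanTLImmp}.

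Your alternative route via Chevalley operations contains a concrete error. You write that under the Pl\"ucker embedding the inequality $\imm_f\ge 0$ becomes a quadratic inequality ``with Pl\"ucker coordinates from $\In_n$'', and then invoke Theorem~\ref{L:3}. But $\In_n$ (Definition~\ref{defn:main:3}) consists precisely of the Pl\"ucker coordinates corresponding to \emph{principal} minors, i.e.\ the terms $\det{\bf x}_{P,P}\det{\bf x}_{P^c,P^c}$. The general products $\det{\bf x}_{P,Q}\det{\bf x}_{P^c,Q^c}$ with $P\neq Q$ land outside $\In_n$, so Theorem~\ref{L:3} does not apply and only Theorems~\ref{L:1} and \ref{L:2} are available. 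With that correction the Chevalley route still makes sense as a program---it is essentially Theorem~\ref{Main-thm} of this paper, whose certificates over $321$-avoiding permutations are in bijection with the Kauffman diagrams---but to recover the full Theorem~\ref{SB} you would still need the ``moreover'' expansion and a linear-independence statement for the $\imm_\tau$, neither of which the Chevalley machinery supplies on its own.
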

Theorem~\ref{SB} can be applied in identifying quadratic inequalities \eqref{quad_ineq} when $m=n$ and $I_\A\cap J_\A=\emptyset$. Recall that the TNN matrices sit inside in the TNN Grassmannian:
\begin{align}\label{Plucker-embed}
\{\mbox{all $n\times m$ TNN matrices}\}\hookrightarrow \Gr^{\geq 0}(m,m+n)\quad\mbox{where}\quad A \mapsto \overline{A}:= \begin{pmatrix} A \\ W_{0} \end{pmatrix}
\end{align}
with $W_{0}:=(w_{ij})=\big((-1)^{i+1}\cdot\delta_{j,m-i+1}\big)^{m}_{i,j=1},$ i.e. $w_{ij}=(-1)^{i+1}$ if $j=m-i+1,$ and $0$ otherwise. This yields a one-to-one correspondence between the minors of $A$ and the maximal minors of $\overline{A}$ via 
$
\det A_{P,Q}=\det \overline{A}_{I,[m]}=\Delta_{I}(\overline{A})
$
where $I:= P \cup \{ m+n+1-j \,|\, j \in [m] \setminus Q\}$ for all $P\subseteq [n],Q \subseteq [m]$ with $|P|=|Q|.$
This along with the projective geometry of the Grassmannian provides us with the following equivalence between inequalities that are quadratic in minors of TNN matrices and inequalities that are quadratic in Pl\"ucker coordinates over the TNN Grassmannian.

\begin{theorem}[Soskin--Vishwakarma \cite{SV}]\label{Ineq-equivalence}
Let $1\leq m\leq n$ be integers, and suppose sets $P_i\subseteq [n]$ and $Q_i\subseteq [m]$ with $|P_i|=|Q_i|,$ for $i=1,2.$ Let $c_{P_1,Q_1,P_2,Q_2}=c_{I,J}\in \R,$ where each 
\begin{align*}
I=P_1\cup \{ m+n+1-j \,|\, j \in [m] \setminus Q_1\}\quad \mbox{and}\quad J=P_2\cup \{ m+n+1-j \,|\, j \in [m] \setminus Q_2\}.
\end{align*}
Then the following three inequalities are equivalent: 
\begin{align*}
\sum_{P_1,Q_1,P_2,Q_2} c_{P_1,Q_1,P_2,Q_2}\det A_{P_1,Q_1} \det A_{P_2,Q_2} & \geq 0\quad \forall A_{n\times m} \quad TNN. \\
\sum_{I,J} c_{I,J}\Delta_{I}(\overline{A}) \Delta_{J}(\overline{A}) &\geq 0 \quad \forall A_{n\times m} \quad TNN. \\
\sum_{I,J} c_{I,J}\Delta_{I} \Delta_{J} &\geq 0\quad \mbox{ over } \quad \Gr^{\geq 0}(m,m+n).
\end{align*}
\end{theorem}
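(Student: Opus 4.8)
The plan is to prove the three-way equivalence in Theorem~\ref{Ineq-equivalence} by establishing two implications that chain together: the first between the two matrix inequalities (those quantified over all $n\times m$ TNN matrices $A$), and the second between the second matrix inequality and the Grassmannian inequality. First I would set up the embedding $A \mapsto \overline{A}$ from \eqref{Plucker-embed} and verify carefully the claimed dictionary $\det A_{P,Q} = \Delta_I(\overline{A})$, where $I = P \cup \{m+n+1-j : j \in [m]\setminus Q\}$. This is a Laplace/Cauchy--Binet computation: the bottom block $W_0$ is (up to signs) an antidiagonal permutation matrix on the last $m$ rows, so expanding $\det \overline{A}_{I,[m]}$ along the rows of $\overline{A}$ lying in the $W_0$-block forces exactly the columns indexed by $[m]\setminus Q$ to be ``used up'' by $W_0$ (contributing $\pm 1$), leaving the minor of $A$ on rows $P = I \cap [n]$ and the complementary columns $Q$. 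The sign bookkeeping must be tracked: I expect that with the particular sign pattern $w_{ij} = (-1)^{i+1}\delta_{j,m-i+1}$ chosen in the paper, all signs cancel and one gets $\det A_{P,Q} = \Delta_I(\overline{A})$ on the nose, with no stray sign — this choice of $W_0$ is precisely engineered to make that true. Once this identity holds, the equivalence of the first two displayed inequalities is immediate, since they are literally the same sum rewritten term by term.

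For the second equivalence — between ``$\sum c_{I,J}\Delta_I(\overline{A})\Delta_J(\overline{A}) \geq 0$ for all TNN $A$'' and ``$\sum c_{I,J}\Delta_I\Delta_J \geq 0$ over $\TGr$'' — the forward direction is trivial once we know that every matrix of the form $\overline{A}$ with $A$ TNN represents a point of $\TGr$; this in turn follows from the dictionary, since all maximal minors $\Delta_I(\overline{A})$ either equal some minor $\det A_{P,Q} \geq 0$ (when $I$ meets the bottom block in the ``expected'' antidiagonal way) or vanish. The substantive direction is the converse: I need that the image of the map $A \mapsto [\overline{A}] \in \TGr$ is ``dense enough'' that a quadratic polynomial in Plücker coordinates vanishing-or-nonnegative on it is so on all of $\TGr$. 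The standard fact here is that the matrices $\overline{A}$ with $A$ totally \emph{positive} sweep out the top-dimensional positroid cell of $\TGr$ (equivalently, every generic point of $\TGr$ has a matrix representative of this block form after a suitable torus/column rescaling), so the bracket polynomial $\sum c_{I,J}\Delta_I\Delta_J$, being continuous and homogeneous of fixed multidegree, is $\geq 0$ on a dense subset of $\TGr$ and hence on all of $\TGr$ by continuity. I would invoke exactly this: the closure of $\{[\overline{A}] : A \text{ TP}\}$ is all of $\TGr$, combined with homogeneity so that the choice of representative is immaterial.

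The main obstacle, and the step I would spend the most care on, is the density claim $\overline{\{[\overline{A}] : A \text{ TNN}\}} = \TGr$ (or its top-cell version with TP matrices). One route is the classical Loewner--Whitney--Lusztig--BFZ parametrization invoked in the introduction: every point of the interior of $\TGr$ can be reached, since $\TGr$ is the closure of $\PGr$ and $\PGr$ is parametrized by products of Chevalley generators acting on a base point, and one checks the base point and the generators can be realized within the $\overline{A}$-family up to column operations (which do not change the point of the Grassmannian). Alternatively — and this may be cleaner to cite — one uses that the map from $n\times m$ TP matrices into $\Gr(m,m+n)$ lands in $\PGr$ and is onto it, a fact that is essentially Postnikov's or can be extracted from the Gantmacher--Krein/Lusztig theory; then take closures. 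I would state the density fact as a lemma with a one-line citation to \cite{postnikov2006total} or \cite{lam2015totally} rather than reprove it, since it is genuinely standard, and then the theorem follows by stitching: (matrix inequality 1) $\Leftrightarrow$ (matrix inequality 2) by the sign-exact dictionary, and (matrix inequality 2) $\Leftrightarrow$ (Grassmannian inequality) by density and continuity, with homogeneity ensuring well-definedness on projective space.
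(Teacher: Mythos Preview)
The paper does not give its own proof of Theorem~\ref{Ineq-equivalence}: the result is attributed to Soskin--Vishwakarma \cite{SV} and stated without proof. The only justification offered in this paper is the sentence immediately preceding the theorem, which says the equivalence follows from the minor dictionary $\det A_{P,Q}=\Delta_I(\overline{A})$ ``along with the projective geometry of the Grassmannian.'' Your proposal is exactly a fleshing-out of that one-line sketch---the Laplace expansion and sign bookkeeping for the dictionary, then density of $\{[\overline{A}]:A\text{ TNN}\}$ in $\TGr$ together with continuity and homogeneity for the nontrivial implication---so it is correct and aligned with what the paper indicates.

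Two minor remarks. First, your ``forward'' and ``converse'' labels for the second equivalence are swapped relative to the arguments you actually give: the direction made trivial by $\overline{A}\in\TGr$ is $(3)\Rightarrow(2)$, while the substantive direction requiring density is $(2)\Rightarrow(3)$. Second, the density/surjectivity fact you propose to cite from \cite{postnikov2006total} or \cite{lam2015totally} is essentially contained in this paper: the first paragraph of the proof of Lemma~\ref{L:Fact1} shows that every $V\in\Gr^{>0}(m,m+n)$ has a representative $\begin{pmatrix}B\\W_0\end{pmatrix}M$ with $B$ totally positive, i.e.\ $V=[\overline{B}]$, and the paper records just before Lemma~\ref{L:Fact2} that $\Gr^{>0}(m,m+n)$ is dense in $\TGr$. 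So you could point internally rather than externally for that step.
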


The correspondence between the inequalities in Theorem~\ref{Ineq-equivalence} is compatible with the Temperley--Lieb immanant idea.

\begin{thm}[Rhoades--Skandera \cite{RSkanTLImmp}, Soskin--Vishwakarma \cite{SV}]\label{t:oneprod}
Suppose $I$ runs over $n$-element subsets of $[2n],$ and $c_I\in \R.$ Then
\begin{align}\label{eq:pl0}
\sum_{I} c_{I}\Delta_{I} \Delta_{I^{\mathrm{c}}} \geq 0 \quad\mbox{over}\quad\Gr^{\geq 0}(n,2n)
\end{align}
if and only if $\sum_{I} c_{I}\Delta_{I}(\overline{{\bf x}}) \Delta_{I^{\mathrm{c}}}(\overline{{\bf x}})$ is a nonnegative linear combination of Temperley--Lieb immanants, where ${\bf x}=(x_{ij})_{i,j=1}^{n}$ and $\overline{{\bf x}}$ is as in \eqref{Plucker-embed}. Moreover, we have for each $I$ that 
\begin{equation}\label{eq:pl1}
\Delta_{I}(\overline{{\bf x}})\Delta_{I^c}(\overline{{\bf x}}) = \sum_{\tau \in \K_n}b_{\tau}\imm_{\tau}({\bf x}),    
\end{equation}
where $b_{\tau}=1$ if each edge in the Kauffman diagram of $\tau$ connects elements from $I$ and $I^c$, and $0$ otherwise.
\end{thm}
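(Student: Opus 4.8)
The plan is to obtain both halves of the statement by combining the two results just quoted: the Rhoades--Skandera expansion of a product of complementary minors into Temperley--Lieb immanants (Theorem~\ref{SB}), and the Soskin--Vishwakarma equivalence (Theorem~\ref{Ineq-equivalence}) between quadratic inequalities in minors of TNN matrices and quadratic inequalities in Pl\"ucker coordinates over $\Gr^{\geq 0}(n,2n)$. Essentially all of the work is bookkeeping with the index bijection built into \eqref{Plucker-embed} and with the labelings of the boundary $2n$-cycle. First I would fix the dictionary: given an $n$-element $I\subseteq[2n]$, set $P:=I\cap[n]$ and let $Q\subseteq[n]$ be defined by $I\cap[n+1,2n]=\{2n+1-j:j\in[n]\setminus Q\}$; since $|I|=n$ this forces $|P|=|Q|$, and as $I$ runs over the $n$-subsets of $[2n]$ the pair $(P,Q)$ runs over all pairs of subsets of $[n]$ with $|P|=|Q|$. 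By the minor-to-Pl\"ucker correspondence recorded after \eqref{Plucker-embed}, $\Delta_I(\overline{{\bf x}})=\det{\bf x}_{P,Q}$; a one-line complementation check gives $I^{\mathrm c}=P^{\mathrm c}\cup\{2n+1-j:j\in Q\}$, hence $\Delta_{I^{\mathrm c}}(\overline{{\bf x}})=\det{\bf x}_{P^{\mathrm c},Q^{\mathrm c}}$. Therefore $\Delta_I(\overline{{\bf x}})\Delta_{I^{\mathrm c}}(\overline{{\bf x}})=\det{\bf x}_{P,Q}\det{\bf x}_{P^{\mathrm c},Q^{\mathrm c}}$, which lies in the space \eqref{eq:tlspace}, and $\sum_I c_I\Delta_I(\overline{{\bf x}})\Delta_{I^{\mathrm c}}(\overline{{\bf x}})$ is precisely an immanant of the form \eqref{eq:sumofprodsof2minors}.

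For the ``moreover'' identity \eqref{eq:pl1} I would apply the last sentence of Theorem~\ref{SB}: $\det{\bf x}_{P,Q}\det{\bf x}_{P^{\mathrm c},Q^{\mathrm c}}=\sum_{\tau\in\K_n}b_\tau\imm_\tau({\bf x})$ with $b_\tau=1$ exactly when every edge of the Kauffman diagram of $\tau$ joins $P\cup Q^{\mathrm c}$ to $P^{\mathrm c}\cup Q$ on the $2n$-cycle. It then remains to identify this $2n$-cycle with the one indexing $I$ and $I^{\mathrm c}$: the Kauffman boundary consists of the ``row'' points $1,\dots,n$ followed, in the cyclic order used by Theorem~\ref{SB}, by the ``column'' points $\bar n,\dots,\bar 1$ in reversed order, and the bijection $i\mapsto i$, $\bar j\mapsto 2n+1-j$ carries this $2n$-cycle onto the boundary cycle $1,\dots,2n$ compatibly with the cyclic structure. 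Under it, the dictionary of the previous paragraph gives $P\cup Q^{\mathrm c}\mapsto I$ and $P^{\mathrm c}\cup Q\mapsto I^{\mathrm c}$, so ``every edge joins $P\cup Q^{\mathrm c}$ and $P^{\mathrm c}\cup Q$'' translates into ``every edge joins $I$ and $I^{\mathrm c}$'', which is \eqref{eq:pl1}. I expect this reconciliation of labelings --- checking that the order reversal on column indices that is baked into the matrix $W_0$ in \eqref{Plucker-embed} is exactly the reindexing of the boundary $2n$-cycle needed to turn the Rhoades--Skandera condition into the one in \eqref{eq:pl1} --- to be the one genuinely delicate point, so I would spell it out carefully and sanity-check it for $n=2$.

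Finally, for the equivalence in \eqref{eq:pl0}: apply Theorem~\ref{Ineq-equivalence} with $m=n$ and $(P_1,Q_1,P_2,Q_2)=(P,Q,P^{\mathrm c},Q^{\mathrm c})$. It says that $\sum_I c_I\Delta_I\Delta_{I^{\mathrm c}}\geq 0$ over $\Gr^{\geq 0}(n,2n)$ holds if and only if $\sum_I c_I\det{\bf x}_{P,Q}\det{\bf x}_{P^{\mathrm c},Q^{\mathrm c}}\geq 0$ for every $n\times n$ TNN matrix ${\bf x}$, i.e.\ if and only if the immanant $\sum_I c_I\Delta_I(\overline{{\bf x}})\Delta_{I^{\mathrm c}}(\overline{{\bf x}})$ is TNN. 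By the first part of Theorem~\ref{SB}, this is equivalent to that immanant being a nonnegative linear combination of Temperley--Lieb immanants. Assembling the three steps gives the theorem.
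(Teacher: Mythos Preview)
Your proposal is correct. The paper does not give its own proof of this theorem; it simply records the statement with the attribution to Rhoades--Skandera and Soskin--Vishwakarma, having just stated Theorem~\ref{SB} and Theorem~\ref{Ineq-equivalence}. Your argument --- translating $I\leftrightarrow(P,Q)$ via the Pl\"ucker embedding \eqref{Plucker-embed}, reading off \eqref{eq:pl1} from the last sentence of Theorem~\ref{SB} after identifying the boundary labelings, and obtaining the equivalence from Theorem~\ref{Ineq-equivalence} together with the first part of Theorem~\ref{SB} --- is exactly the derivation the combined attribution is pointing to, so there is nothing to compare beyond noting that you have filled in the bookkeeping the paper leaves implicit.
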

Starting from our classification in Theorem~\ref{L:1} and Theorem~\ref{L:2}, we obtain a certificate for \eqref{quad_ineq} in terms of the sums of coefficients $(c_\A)_{\A\in\In}$ over \textit{321-avoiding permutations}; which identify with the Temperley--Lieb immanants.

\begin{defn}[321-avoiding permutations]
Suppose $\eta\geq 1.$ A permutation $\omega\in\mathrm{S}_{\eta}.$ $\omega\in \mathrm{S}_{\eta}$ is called 321-avoiding if there do not exist $i<j<k \in [\eta]$ such that $\omega(k)<\omega(j)<\omega(i).$
\end{defn}

It is well known that the 321-avoiding permutations are identified with \textit{Dyck words}. A Dyck word of semilength $\eta$ is a word in $\eta$ occurrences of the character \textbf{(} and $\eta$ occurrences of the character \textbf{)} such that at each point in it the number of occurrences of \textbf{(} is weakly more than that of \textbf{)}. Dyck words naturally define certain collection of partitions of $[2\eta]$ via consecutive integers.

\begin{defn}[Partitions compatible with 321-avoiding permutations]\label{conse_int:noncross_part}

Suppose $\eta\geq 1$ is an integer.
\begin{enumerate}
\item Let $I$ be a set of integers. We call distinct $u,v \in I$ \textit{consecutive in} $I$ if for all $w\in I$ such that either $u\leq w\leq v$ or $v \leq w\leq u$ then either $w=u$ or $w=v.$

\item Let $I$ be a set of $2\eta$ integers. Consider partitions $P$ of $I$ that are defined via
\begin{align*}
P:=\big{\{}\{u_j,v_j\}:u_j, v_j\in I \mbox{ for }j\in [\eta] \big{\}}
\end{align*}
provided $u_{k},v_{k}$ are consecutive in $I \setminus \sqcup_{j=1}^{k-1}\{u_{j},v_{j}\}$ for all $k\in [n],$ and $\sqcup_{j=1}^{\eta}\{u_j,v_j\} = I,$ where we define $\sqcup_{j=1}^{0}\{u_{j},v_{j}\} :=\emptyset.$ Considering the well-known bijection between 321-avoiding permutations and Dyck words, it is reasonable to call the collection of all such partitions $P$ as \textit{partitions compatible with 321-avoiding permutations}.
\end{enumerate}
\end{defn}

We can also infer now that the number of such partitions $P$ compatible with 321-avoiding permutations is the $\eta$-th Catalan number $C_\eta:=\frac{1}{\eta+1}\binom{2\eta}{\eta}.$

\begin{defn}[For sums over 321-avoiding permutations]\label{for_sums_o_noncross}
Let $1\leq m\leq n$ be integers and suppose $\In$ is a finite indexing set. Let $I_\A,J_\A \subset [m+n]$ be $m$ element subsets such that the multisets $I_{\A}\Cup J_\A$ are equal, for all $\A\in \In$. Suppose $I:=\big{(}I_\A \cup J_\A\big{)} \setminus \big{(}I_\A \cap J_\A\big{)},$ and let $P:=\big{\{}\{u_j,v_j\}:u_j, v_j\in I \mbox{ for }j=1,\dots \eta\big{\}}$ denote a partition of $I$ compatible with a 321-avoiding permutation of $\mathrm{S}_\eta,$ where $2\eta:=|I|.$ Define
\begin{align*}
\In(P):=\big{\{}\A\in \In : \{u_j,v_j\} \cap I_\A \neq \emptyset \mbox{ and } \{u_j,v_j\} \cap J_\A \neq \emptyset, \mbox{ for all }j\in [\eta] \big{\}}.
\end{align*}
\end{defn}

Now we are ready for the first application of Theorem~\ref{L:1}. This involves a collection of sums over 321-avoiding permutations (which identify with the Temperley--Lieb immanants) and includes a classification obtained for homogeneous quadratic inequalities over totally nonnegative matrices due to Rhoades--Skandera \cite{RSkanTLImmp}.

\begin{utheorem}[Classification via sums over 321-avoiding permutations]\label{Main-thm}
Let $1\leq m \leq n$ be integers; notation as in Definitions~\ref{defn:main:1}, \ref{conse_int:noncross_part}, and \ref{for_sums_o_noncross}. The quadratic inequality~\eqref{quad_ineq} is valid if and only if
\begin{align}\label{Eqn1:Main-thm}
\sum_{\A\in \In(P)} c_{\A} \geq 0\quad \mbox{ for all partitions $P$ of $I$ compatible with a 321-avoiding permutation in $\mathrm{S}_\eta$}.  
\end{align}
\end{utheorem}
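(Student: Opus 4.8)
The plan is to reduce \eqref{quad_ineq} to the one‑product case using Theorem~\ref{L:2}, to derive necessity of \eqref{Eqn1:Main-thm} by iterating the Chevalley operations of Theorem~\ref{L:1} along a compatible partition, and to derive sufficiency from the Temperley--Lieb expansion supplied by Theorem~\ref{t:oneprod}. First I would apply Theorem~\ref{L:2}: put $I:=(I_\A\cup J_\A)\setminus(I_\A\cap J_\A)$, $2\eta:=|I|$, let $\Phi:I\to[2\eta]$ be the order preserving bijection, and set $K_\A:=\Phi(I_\A\setminus J_\A)$ and $K_\A^{\mathrm c}:=[2\eta]\setminus K_\A=\Phi(J_\A\setminus I_\A)$. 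Then \eqref{quad_ineq} is valid if and only if $\sum_\A c_\A\,\Delta_{K_\A}\Delta_{K_\A^{\mathrm c}}\geq 0$ over $\Gr^{\geq 0}(\eta,2\eta)$ (the case $\eta=0$ being immediate). The bijection $\Phi$ carries the partitions of $I$ compatible with $321$‑avoiding permutations (Definition~\ref{conse_int:noncross_part}) onto those of $[2\eta]$, and for such a partition $P$ one has $\A\in\In(P)$ (Definition~\ref{for_sums_o_noncross}) precisely when every block of $\Phi(P)$ has one element in $K_\A$ and one in $K_\A^{\mathrm c}$: each block lies inside $I$, hence avoids $I_\A\cap J_\A$, so meeting both $I_\A$ and $J_\A$ is the same as straddling $K_\A$ and its complement. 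Thus it suffices to prove the statement in the one‑product case $m=n=\eta$, $I_\A\cap J_\A=\emptyset$.

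For the forward implication I would run exactly the recursion of Algorithm~\ref{algo}. Fix a partition $P=\{\{u_1,v_1\},\dots,\{u_\eta,v_\eta\}\}$ of $[2\eta]$, listed in an order witnessing compatibility, so $\{u_k,v_k\}$ is consecutive in $[2\eta]\setminus\bigsqcup_{j<k}\{u_j,v_j\}$. Apply $\So_{(u_1,v_1)}$ and then Theorem~\ref{L:2} to $\sum_\A c_\A\,\Delta_{K_\A}\Delta_{K_\A^{\mathrm c}}$: by \eqref{cheva_op_mech:1} the Chevalley step is an equivalence, and since $\{u_1,v_1\}$ is consecutive the outcome is again one‑product, now over $\Gr^{\geq 0}(\eta-1,2\eta-2)$, with index set $\In(u_1,v_1)=\{\A:\{u_1,v_1\}\text{ straddles }K_\A\}$ when that set is nonempty (and the identity otherwise) and with every $K_\A$ replaced by its restriction to $[2\eta]\setminus\{u_1,v_1\}$, relabelled. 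Iterating for $k=2,\dots,\eta$ --- the bookkeeping to check being that after $k-1$ steps the ground set is the relabelling of $[2\eta]\setminus\bigsqcup_{j<k}\{u_j,v_j\}$ and the surviving index set is $\{\A:\{u_j,v_j\}\text{ straddles }K_\A\text{ for all }j<k\}$ --- lands on $\Gr^{\geq 0}(0,0)$, where the expression has become the number $\sum_{\A\in\In(P)}c_\A$ (using the empty‑set conventions). Since each step only weakened validity, a valid \eqref{quad_ineq} forces $\sum_{\A\in\In(P)}c_\A\geq 0$; and if some $\{u_k,v_k\}$ straddles no surviving $K_\A$, then $\In(P)=\emptyset$ and that inequality in \eqref{Eqn1:Main-thm} is vacuous.

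For the reverse implication I would invoke Theorem~\ref{t:oneprod}. With $\overline{{\bf x}}$ as in \eqref{Plucker-embed}, formula \eqref{eq:pl1} gives $\Delta_{K_\A}(\overline{{\bf x}})\Delta_{K_\A^{\mathrm c}}(\overline{{\bf x}})=\sum_{\tau\in\K_\eta}b^{(\A)}_\tau\,\imm_\tau({\bf x})$, where $b^{(\A)}_\tau=1$ exactly when every edge of the Kauffman diagram $\tau$ joins $K_\A$ to $K_\A^{\mathrm c}$. Kauffman diagrams (matchings of the $2\eta$‑cycle) are precisely the noncrossing perfect matchings of $[2\eta]$, i.e.\ (through Dyck words) the partitions of $[2\eta]$ compatible with $321$‑avoiding permutations in $\mathrm{S}_\eta$; under this identification together with $\Phi^{-1}$, the condition $b^{(\A)}_\tau=1$ becomes $\A\in\In(P_\tau)$, by the straddling reformulation above. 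Hence
\[
\sum_\A c_\A\,\Delta_{K_\A}(\overline{{\bf x}})\,\Delta_{K_\A^{\mathrm c}}(\overline{{\bf x}})=\sum_{\tau\in\K_\eta}\Big(\sum_{\A\in\In(P_\tau)}c_\A\Big)\imm_\tau({\bf x}),
\]
and since the Temperley--Lieb immanants are linearly independent, \eqref{Eqn1:Main-thm} says exactly that this is a nonnegative combination of Temperley--Lieb immanants. Theorem~\ref{t:oneprod} then yields validity of the one‑product inequality, and Theorem~\ref{L:2} carries it back to \eqref{quad_ineq}.

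The genuine obstacle is the reverse implication, and more precisely the reason the Chevalley recursion cannot close it unaided: a consecutive pair $\{u,v\}$ straddling no $K_\A$ turns $\So_{(u,v)}$ into the identity, so Theorem~\ref{L:1} returns no information for such ``inert'' pairs --- exactly where the Temperley--Lieb input of Theorem~\ref{t:oneprod} (equivalently, the nonnegativity of Pl\"ucker coordinates, which collapses inert pairs) becomes indispensable. The rest is routine: the classical bijection between Kauffman diagrams and compatible partitions, the matching of $b^{(\A)}_\tau$ with $\In(P_\tau)$, and the inductive bookkeeping in the forward recursion.
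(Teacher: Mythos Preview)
Your proof is correct. The forward implication matches the paper's approach: both iterate Chevalley operations along a compatible partition $P$ to reach the scalar $\sum_{\A\in\In(P)}c_\A$ (the paper packages this as the composite $\So_P$ of Definition~\ref{compositive-Chev-Op-1}). For the reverse implication you take a different route, reading off each $\sum_{\A\in\In(P_\tau)}c_\A$ as the coefficient of $\imm_\tau$ via the Temperley--Lieb expansion of Theorem~\ref{t:oneprod}; the paper instead appeals only to Theorems~\ref{L:1} and~\ref{L:2}. Your approach is cleaner once Theorem~\ref{t:oneprod} is granted, while the paper's stays entirely inside the Chevalley framework and avoids quoting Rhoades--Skandera. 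Your diagnosis that the recursion ``cannot close it unaided'' is a little too strong: the obstacle you identify --- that an inert pair $(u,v)$ renders the \emph{statement} of Theorem~\ref{L:1} tautological --- is genuine, but it dissolves once one uses the \emph{proof} of Theorem~\ref{L:1} rather than its statement: inert Chevalley factors contribute nothing to the inductive expansion in Claim~1 there, and the value of $F_\In$ at the diagonal test matrices is computed directly in Claim~2 as a positive multiple of the rainbow-partition sum, which is among the sums assumed nonnegative. So the paper's route does close, though its one-line proof leaves this unpacking to the reader; your route via Temperley--Lieb sidesteps the subtlety at the cost of importing an external ingredient. One minor wording point: calling a single Chevalley step ``an equivalence'' via \eqref{cheva_op_mech:1} overstates Theorem~\ref{L:1}, which is an iff only for the conjunction over all $(u,v)$ --- but since your forward argument uses only the forward implication, this does not affect correctness.
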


\noindent Theorem~\ref{Main-thm} gives the certificate for \eqref{quad_ineq} via exactly $C_{\eta}=\frac{1}{\eta+1}\binom{2\eta}{\eta}$ many sums of coefficients $\big{(}c_\A\big{)}_{\A\in \In}.$

The proof given by Rhoades--Skandera \cite{RSkanTLImmp} involves the construction of totally nonnegative matrices for the wiring diagrams of each 321-avoiding permutation, which also refer to the Temperley--Lieb immanants. Our proof is an application of Chevalley operations by Theorem~\ref{L:1}. We introduce some notation:

\begin{defn}[Certain composite Chevalley operations]\label{compositive-Chev-Op-1}
Let $1\leq m\leq n$ be integers; notations in Definitions~\ref{defn:main:1} and \ref{defn:main:2}. We define a compositive Chevalley operation that depends on $\mathcal{P}:=(I_\A,J_\A)_{\A\in \In},$ so we denote it by $\So^{\mathcal{P}}_{(-,-)},$ and define it in the following steps:
\begin{enumerate}
    \item Suppose $u\neq v \in [m+n],$ and define
\begin{align*}
\overline{\So}_{(u,v)}:=
\begin{cases}
\So_{(v-1,v)}\circ \dots \circ \So_{(u+1,u+2)} \circ \So_{(u,u+1)} & \mbox{ if } u<v,\\
\So_{(v+1,v)}\circ \dots \circ \So_{(u-1,u-2)} \circ \So_{(u,u-1)} & \mbox{ if } u>v.
\end{cases}
\end{align*}
    \item Now suppose $u<v$ are consecutive in $I:=(I_\A \cup J_\A)\setminus (I_\A \cap J_\A),$ and consider all indices $u<r_1<r_2<\dots<r_k<v$ such that each $r_j \in I_\A \cap J_\A.$ Define the following composition of Chevalley operations:
\begin{align*}
\So_{(v,u)}^{\mathcal{P}}:=\So_{(u,v)}^{\mathcal{P}}:=
\begin{cases}
\overline{\So}_{(u,v)} & \mbox{ if }\{r_j\}=\emptyset,\mbox{ and}\\
\overline{\So}_{(u+k,v)}\circ\overline{\So}_{(r_k,u+k-1)}\circ\dots\circ\overline{\So}_{(r_2,u+1)} \circ \overline{\So}_{(r_1,u)} & \mbox{ otherwise}.
\end{cases}
\end{align*}
\end{enumerate}
\end{defn}

We are now ready for the next proof.

\begin{proof}[Proof of Theorem~\ref{Main-thm}]
Suppose $\mathcal{P}_1:= (I_\A,J_\A)_{\A\in\In}.$ One can see that for a consecutive pair $u,v$ in $(I_\A \cup J_\A)\setminus (I_\A \cap J_\A),$ the application of ${\So}_{(u,v)}^{\mathcal{P}_1}$ on \eqref{quad_ineq} yields an inequality of the form \eqref{quad_ineq}, say for parameters $(I_\A^{(2)},J_\A^{(2)})_{\A\in \In^{(2)}},$ which satisfy 
\begin{align*}
|(I_\A^{(2)} \cup J_\A^{(2)})\setminus (I_\A^{(2)} \cap J_\A^{(2)})| = |(I_\A \cup J_\A)\setminus (I_\A \cap J_\A)|-2.
\end{align*}
Suppose $P:=\{\{u_1,v_1\},\dots,\{u_\eta,v_\eta\}\}$ is a partition of $I_1:=(I_\A \cup J_\A)\setminus (I_\A \cap J_\A)$ compatible with a 321-avoiding permutation of $\mathrm{S}_\eta,$ where $2\eta:= |I_1|.$ Corresponding to each of these partitions $P$, we define compositions of Chevalley operations via
\[
\So_{P}:=\So_{(u_\eta,v_\eta)}^{\mathcal{P}_\eta} \circ \dots \circ \So_{(u_1,v_1)}^{\mathcal{P}_1}
\]
where $\mathcal{P}_k:=(I_\A^{(k)},J_\A^{(k)})_{\A\in\In^{(k)}}$ is the parameter for the inequality obtained by operating $\So_{(u_{k-1},v_{k-1})}^{\mathcal{P}_{k-1}} \circ \dots \circ \So_{(u_1,v_1)}^{\mathcal{P}_1}$ on the inequality with parameter $\mathcal{P}_1=(I_\A,J_\A)_{\A\in\In},$ for $k=2,\dots,\eta.$ Theorem~\ref{Main-thm} now follows via Theorems~\ref{L:1} and \ref{L:2}, since each $\sum_{\A\in \In(P)}c_\A$ corresponds to the action of $\So_{P}$ over \eqref{quad_ineq}.
\end{proof}

The second application of Chevalley operations yields a novel result. Here we consider a smaller class of \eqref{quad_ineq} with $m=n$ such that the Pl\"ucker coordinates $I_\A, J_\A\in \In_{n}$ (see Definition~\ref{defn:main:3}). These inequalities essentially refer to quadratic matrix determinantal inequalities that involve only the principal minors. Recall that in Theorem~\ref{Main-thm} we classify all quadratic inequalities, and we do this via a set of nonnegativity conditions, each corresponding to a 321-avoiding permutation. Therefore, if we consider classifying a smaller class of quadratic inequalities (in which $I_\A, J_\A\in \In_{n}$) then expecting a smaller collection of nonnegativity conditions is natural. We show that this subclass of all nonnegativity conditions refer to partitions $P$ that are compatible with \textit{321-avoiding involutions} (compared to all 321-avoiding permutations required in Theorem~\ref{Main-thm}).

\begin{defn}[321-avoiding involutions]
Suppose $\eta\geq 1.$ A 321-avoiding permutation $\omega\in\mathrm{S}_{\eta}$ is called a 321-avoiding involution if it satisfies $\omega^{-1}=\omega.$
\end{defn}

It is well known that the 321-avoiding involutions are identified with Dyck words of semilength $\eta$ that are symmetric about the center of the word. The number of these is given by $\binom{\eta}{\floor{\eta/2}}.$


\begin{utheorem}[A classification via sums over 321-avoiding involutions]\label{TLnew}
Let $n,\eta\geq 1$ be integers and suppose $\mathcal{I}$ is an ordered finite indexing set. Suppose $I_{\alpha},J_{\alpha}\in \In_n$  (Definition~\ref{defn:main:3}) such that the multisets $I_{\alpha}\Cup J_{\alpha}$ are equal, for all $\A\in \In.$ Suppose $I:=(I_\A \cup J_\A) \setminus(I_\A \cap J_\A)$ and $2\eta:=|I|.$ Then for $c_{\alpha}\in \R,$
\begin{align}\label{TLnew:Eqn:1}
\sum_{\alpha\in \mathcal{I}} c_{\alpha} \Delta_{I_{\alpha}} \Delta_{J_{\alpha}} \geq 0 \quad\mbox{over}\quad \Gr^{\geq 0}(n,2n),
\end{align}
if and only if 
\begin{align}\label{TLnew:Eqn:2}
\sum_{\A\in \In(P)} c_{\A} \geq 0\quad \mbox{ for all partitions $P$ of $I$ compatible with a 321-avoiding involution in $\mathrm{S}_\eta$}.
\end{align}
\end{utheorem}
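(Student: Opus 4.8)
The plan is to deduce Theorem~\ref{TLnew} from Theorem~\ref{Main-thm} together with Theorem~\ref{L:3}, exploiting the extra structure that $I_\A,J_\A\in\In_n$ forces on the Chevalley operations. Recall from Theorem~\ref{Main-thm} that \eqref{TLnew:Eqn:1} is equivalent to $\sum_{\A\in\In(P)}c_\A\ge 0$ ranging over \emph{all} partitions $P$ of $I$ compatible with a 321-avoiding permutation of $\mathrm{S}_\eta$, where the sum $\sum_{\A\in\In(P)}c_\A$ is realized as the output of the composite Chevalley operation $\So_P=\So^{\mathcal{P}_\eta}_{(u_\eta,v_\eta)}\circ\dots\circ\So^{\mathcal{P}_1}_{(u_1,v_1)}$ applied to \eqref{quad_ineq}. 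So the task reduces to showing that, \emph{when all index sets lie in $\In_n$}, the $C_\eta$ conditions coming from all 321-avoiding permutations collapse to the $\binom{\eta}{\lfloor\eta/2\rfloor}$ conditions coming only from the symmetric Dyck words, i.e.\ the 321-avoiding involutions — the remaining conditions being either redundant (implied by these) or trivially satisfied.

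The key mechanism is the involution $u\mapsto u^*=2\eta+1-u$ and the way $\In_\eta$ interacts with it: a set $I\in\In_\eta$ is determined by a subset $P\subseteq[\eta]$ via $I=P\cup([\eta+1,2\eta]\setminus(2\eta+1-P))$, which is precisely a ``self-complementary-under-$*$'' condition. Theorem~\ref{L:3} already records that from an $\In_\eta$-expression the \emph{single} relevant Chevalley step is the composite $\So_{(v^*,u^*)}\circ\So_{(u,v)}$ over consecutive $u,v\in[\eta+1]$, and that this composite preserves membership in $\In_\eta$. So the first step of my proof is to iterate Theorem~\ref{L:3}: running the Algorithm~\ref{algo1} recursion, I get that \eqref{TLnew:Eqn:1} holds iff a family of base-case scalar inequalities holds, one for each sequence of choices of consecutive pairs in shrinking $[\eta'+1]$'s. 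The second step is to match this recursion tree against the $\So_P$'s of Theorem~\ref{Main-thm}: I claim each composite $\So_{(v^*,u^*)}\circ\So_{(u,v)}$ equals $\So^{\mathcal{P}}_{(u_j,v_j)}$ (in the sense of Definition~\ref{compositive-Chev-Op-1}) for a pair $\{u_j,v_j\}$ and simultaneously for its mirror $\{u_j^*,v_j^*\}$, so that the partition $P$ one builds is forced to be $*$-symmetric, hence compatible with a 321-avoiding involution; conversely every $*$-symmetric partition arises this way. The third step is to verify that the $\In(P)$ appearing in Theorem~\ref{Main-thm} for a $*$-symmetric $P$ agrees with the $\In(P)$ in the statement of Theorem~\ref{TLnew} (same Definition~\ref{for_sums_o_noncross}), so the surviving conditions are exactly \eqref{TLnew:Eqn:2}; and that the non-symmetric partitions contribute nothing new, because the $\In_n$-hypothesis makes the corresponding composite operation factor through one already accounted for (its output again lies in $\In_{\eta'}$, and by the parity/symmetry bookkeeping its sum $\sum_{\A\in\In(P)}c_\A$ coincides with the sum for the symmetrization of $P$).

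I expect the main obstacle to be exactly this last reduction: rigorously showing that when $I_\A,J_\A\in\In_n$ the only partitions $P$ that yield genuinely new constraints are the $*$-symmetric ones. This requires a careful tracking of how the $*$-symmetry of every $I_\A,J_\A$ is propagated by a single Chevalley operation $\So_{(u,v)}$ — and in general a single $\So_{(u,v)}$ does \emph{not} preserve $\In_\eta$; only the paired composite $\So_{(v^*,u^*)}\circ\So_{(u,v)}$ does. So the argument must be organized so that one never applies an unpaired step, which is precisely why Algorithm~\ref{algo1} (equivalently Theorem~\ref{L:3}) is the right tool rather than Theorem~\ref{L:1} directly. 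Concretely, the delicate point is the case analysis in step $(3')$ of Algorithm~\ref{algo1}: when $\{u,v\}=\{\eta,\eta+1\}$ the dimension drops by one and one stays with $\In_{\eta-1}$, otherwise it drops by two landing in $\In_{\eta-2}$ — and I must check that this dichotomy is exactly the combinatorial statement that building a symmetric Dyck word either pairs the two central symbols (drop by one) or pairs a symbol with its mirror far from the center (drop by two). Once that dictionary is established, the count $\binom{\eta}{\lfloor\eta/2\rfloor}$ of symmetric Dyck words of semilength $\eta$ matches the number of surviving base cases, and Theorem~\ref{TLnew} follows by the same ``each scalar inequality corresponds to the action of a composite Chevalley operation'' argument used in the proof of Theorem~\ref{Main-thm}.
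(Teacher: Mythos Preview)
Your core approach---iterate Theorem~\ref{L:3} (equivalently Algorithm~\ref{algo1}) until the expression collapses to scalars, then identify the resulting leaf conditions with the partitions compatible with 321-avoiding involutions---is exactly what the paper does. The paper defines $\overline{\So}_{(u,v)}^{\mathcal{P}}:=\So_{(v^*,u^*)}^{\mathcal{P}}\circ\So_{(u,v)}^{\mathcal{P}}$, observes that for a $*$-symmetric partition $P$ one can reorder $\So_P$ into a composition of these paired operations, and concludes directly via Theorems~\ref{L:3} and \ref{L:2}. Your second step (matching the recursion tree to symmetric Dyck words, including the dichotomy in step $(3')$ of Algorithm~\ref{algo1}) is precisely this identification.

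Where you diverge from the paper is in framing the argument as a \emph{reduction from} Theorem~\ref{Main-thm}: you start with the $C_\eta$ conditions and then set yourself the task of showing the non-symmetric ones are redundant. This detour is unnecessary and creates the ``main obstacle'' you describe, which is in fact a phantom. Once you have established via iterating Theorem~\ref{L:3} that \eqref{TLnew:Eqn:1} holds \emph{if and only if} the leaf scalars (indexed by $*$-symmetric partitions) are nonnegative, you are done---no appeal to Theorem~\ref{Main-thm} is needed, and the redundancy of non-symmetric conditions is a \emph{consequence}, not a lemma to be proved. In particular your claim that for a non-symmetric $P$ ``its sum $\sum_{\A\in\In(P)}c_\A$ coincides with the sum for the symmetrization of $P$'' is both unjustified (what is \emph{the} symmetrization of a general noncrossing partition?) and not required. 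Drop the first and third steps of your outline; what remains is the paper's proof.
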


\begin{remark}[The novel refinement in Theorem~\ref{TLnew}]
We believe Theorem~\ref{TLnew} is a novel result. Furthermore, it raises an interesting question about a certain subspace of the span of Temperley--Lieb immanants (on which we elaborate in Section~\ref{futurework}).
\end{remark}

\begin{proof}[Proof of Theorem~\ref{TLnew}] Following Definition~\ref{compositive-Chev-Op-1}, define $\overline{\So}_{(u,v)}^{\mathcal{P}}:=~\So_{(v^*,u^*)}^{\mathcal{P}} \circ \So_{(u,v)}^{\mathcal{P}}$ for $\mathcal{P}:=(I_\A,J_\A)_{\A\in \In},$ whenever $u,v$ and $u^*,v^*$ are simultaneously consecutive in $(I_\A \cup J_\A) \setminus (I_\A \cap J_\A).$ One can see that for a consecutive pair $u,v\in I:=\{\ell_1<\dots<\ell_{2\eta}\},$ $\overline{\So}_{(u,v)}^{\mathcal{P}}$ applied on \eqref{TLnew:Eqn:1} yields an inequality of the form \eqref{TLnew:Eqn:1}, say for parameters $(I_\A^{(2)},J_\A^{(2)})_{\A\in \In^{(2)}}$ such that 
\begin{align*}
|(I_\A^{(2)} \cup J_\A^{(2)})\setminus (I_\A^{(2)} \cap J_\A^{(2)})| = 
\begin{cases}
|(I_\A \cup J_\A)\setminus (I_\A \cap J_\A)|-4 & \mbox{ if } \{u,v\}\neq \{\ell_\eta,\ell_{\eta+1}\}, \mbox{ and}\\
|(I_\A \cup J_\A)\setminus (I_\A \cap J_\A)|-2 & \mbox{ otherwise}.
\end{cases}
\end{align*}
Suppose $P:=\{\{u_1,v_1\},\dots,\{u_{\eta},v_{\eta}\}\}$ is a partition of $I$ compatible with a 321-avoiding involution in $\mathrm{S}_{\eta}.$ Corresponding to each such partition, we define compositions of Chevalley operations via
\[
\So_{P}:=\So_{(u_\eta,v_\eta)}^{\mathcal{P}_\eta} \circ \dots \circ \So_{(u_1,v_1)}^{\mathcal{P}_1}
\]
where $\mathcal{P}_k:=(I_\A^{(k)},J_\A^{(k)})_{\A\in\In^{(k)}}$ is the parameter for the inequality obtained by operating $\So_{(u_{k-1},v_{k-1})}^{\mathcal{P}_{k-1}} \circ \dots \circ \So_{(u_1,v_1)}^{\mathcal{P}_1}$ on $\mathcal{P}_1:=(I_\A,J_\A)_{\A\in\In},$ for $k=2,\dots,\eta.$ Since $P$ corresponds to a 321-avoiding involution, $\{u_j,v_j\} \in P$ if and only if $\{u_j^*,v_j^*\}\in P,$ for all $j\in [\eta].$ Therefore, the resultant of the composite operation immediately above can also be obtained as a resultant of the composite operation given by
\[
\So_{Q}:=\So_{({u_{\mu}'}^*,{v_{\mu}'}^*)}^{\mathcal{P}_\eta}\circ \So_{(u_{\mu}',v_{\mu}')}^{\mathcal{P}_\eta} \circ \dots \circ \So_{({u_1'}^*,{v_1'}^*)}^{\mathcal{P}_1}\circ \So_{(u_1',v_1')}^{\mathcal{P}_1} = \overline{\So}_{(u_{\mu}',v_{\mu}')}^{\mathcal{P}_\eta} \circ \dots \circ  \overline{\So}_{(u_1',v_1')}^{\mathcal{P}_1},
\]
where the elements in $Q:=
\big{\{}
\{u_1',v_1'\},\{{u_1'}^*,{v_1'}^*\},
\dots, 
\{u_{\mu}',v_{\mu}'\},\{ {u_{\mu}'}^*,{v_{\mu}'}^*\}
\big{\}}$ are exactly those of $P,$ but in a different order, and it may in fact be possible that $Q$ is a multiset. Nevertheless, Theorem~\ref{TLnew} follows via Theorems~\ref{L:3} and \ref{L:2}, since each $\sum_{\A\in \In(P)}c_\A$ refers to acting $\So_{Q}$ over \eqref{TLnew:Eqn:1}.
\end{proof}

\begin{remark}[Chevalley operations and Lam's Grassmann analogue of Temperley--Lieb immanants]\label{lam-TL-imm}
Lam \cite{lam2015totally,lam2014amplituhedron} discussed the Grassmann analogue of Temperley--Lieb immanants by considering \textit{partial noncrossing pairings}; see for instance \cite[Subsection~4.2]{lam2015totally} for details. The sums over 321-avoiding permutations and involutions (in Theorems~\ref{Main-thm} and \ref{TLnew}) can also be seen as sums over these partial noncrossing pairings and its subclass of ``symmetric'' partial noncrossing pairings, respectively. 
\end{remark}

\subsection{Log-supermodularity of Pl\"{u}cker coordinates}\label{Lam-sup-mod}

It is well known that every point in $\TGr$ can be represented by a network, and that $\TGr$ decomposes into \textit{positroid cells}. These positroid cells can be indexed by any of \textit{Grassmann necklaces}, \textit{bounded affine permutations}, and \textit{positroids}. (See \cite{postnikov2006total, lam2015totally} for details.) We focus on positroids here. A positroid is defined 
\[
\mbox{for each $V\in \TGr$ via } \mathcal{M}_V:=\big{\{}I\subset [m+n] \mbox{ with $m$-elements such that } \Delta_{I}(V)\neq 0 \big{\}}.
\]
For $I=\{i_1<\dots<i_m\},J:=\{j_1<\dots<j_m\}\subset [m+n],$ define binary operations
$$
\max(I,J):=\{\max(i_k,j_k):k\in [m]\} \quad\mbox{and}\quad \min(I,J):=\{\min(i_k,j_k):k\in [m]\}.
$$
The main results in this subsection are due to Lam \cite{lam2015totally}. The proofs by Lam were of a combinatorial spirit via his Grassmann analog of Temperley--Lieb immanants, while the ones we present are different and using the structure of Chevalley operations akin to cluster mutations.


\begin{utheorem}\label{logsupmod}
Each $\mathcal{M}_V$ forms a distributive lattice for the binary operations $(\max,\min)$.
\end{utheorem}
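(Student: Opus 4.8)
The plan is to deduce Theorem~\ref{logsupmod} from the (yet to be proved) log-supermodularity of Pl\"ucker coordinates, together with Theorem~\ref{L:1} for the combinatorics of the lattice operations. First I would recall that log-supermodularity asserts, for every $V\in\TGr$ and every pair of $m$-element subsets $I,J\subset[m+n]$, the inequality
\[
\Delta_{\min(I,J)}(V)\,\Delta_{\max(I,J)}(V)\ \geq\ \Delta_{I}(V)\,\Delta_{J}(V).
\]
This is a quadratic inequality of the form \eqref{quad_ineq}: one checks that $\min(I,J)\Cup\max(I,J)=I\Cup J$ as multisets, so the homogeneity hypothesis of Definition~\ref{defn:main:1} is met, and it therefore falls squarely inside the classification machinery. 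My intention is to prove this inequality by the Chevalley-operation reduction: apply Theorem~\ref{L:2} to strip common indices, then repeatedly apply Theorem~\ref{L:1} for consecutive pairs $u,v$, checking that the $(\min,\max)$ structure is preserved under $\So_{(u,v)}$ and that the base case (small $\eta$) is a trivial or classical $2\times 2$ identity; I expect the preservation of the $(\min,\max)$ form under a single Chevalley operation to be the technical heart of this step, since one must track carefully how the positions $i_k,j_k$ interact with the swap $u\mapsto v$.

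Granting log-supermodularity, the proof that $\mathcal{M}_V$ is a lattice runs as follows. For $I,J\in\mathcal{M}_V$ we have $\Delta_I(V),\Delta_J(V)>0$, so the displayed inequality forces $\Delta_{\min(I,J)}(V)>0$ and $\Delta_{\max(I,J)}(V)>0$, i.e. $\min(I,J),\max(I,J)\in\mathcal{M}_V$. Thus $\mathcal{M}_V$ is closed under the two operations. One then verifies that $(\max,\min)$ on $m$-element subsets of $[m+n]$, restricted to $\mathcal{M}_V$, are genuine join and meet for the partial order $I\leq J\iff i_k\leq j_k$ for all $k$ (the componentwise/Gale order): this is a purely combinatorial check that $\max(I,J)$ is the least upper bound and $\min(I,J)$ the greatest lower bound of $I,J$ among $m$-subsets — and crucially, since both lie in $\mathcal{M}_V$, they are the join and meet \emph{inside} $\mathcal{M}_V$ as well. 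Here I would invoke the standard fact (going back to Gale/Rado) that $\max$ and $\min$ of two $m$-subsets are again $m$-subsets and realize the lattice operations on the Boolean-type lattice of $m$-subsets under the Gale order.

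Finally, distributivity: I would show that the ambient lattice of all $m$-element subsets of $[m+n]$ under $(\max,\min)$ is distributive — it embeds order- and operation-preservingly into the product lattice $\mathbb{Z}^m$ via $I=\{i_1<\cdots<i_m\}\mapsto(i_1,\dots,i_m)$, where $\max$ and $\min$ are coordinatewise, and $\mathbb{Z}^m$ (hence any sublattice of it) is distributive. Since $\mathcal{M}_V$ is a sublattice of this distributive lattice by the previous paragraph, it inherits distributivity, completing the proof. The main obstacle, as noted, is not this final lattice-theoretic packaging but rather establishing log-supermodularity itself through the Chevalley-operation recursion — specifically, proving the inductive invariant that the $(\min,\max)$ shape of the pair of index sets is stable under each operation $\So_{(u,v)}$, so that Theorem~\ref{L:1} can be iterated down to a trivial base case. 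I would organize that argument as a separate lemma preceding Theorem~\ref{logsupmod}.
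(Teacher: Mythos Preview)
Your proposal is correct and follows essentially the same route as the paper: reduce to the log-supermodularity inequality $\Delta_{\min(I,J)}\Delta_{\max(I,J)}\geq\Delta_I\Delta_J$, check the multiset homogeneity, apply Theorem~\ref{L:2} to strip common indices, and then induct via Theorem~\ref{L:1} by verifying that the $(\min,\max)$ shape is preserved (or trivialized) under each $\So_{(u,v)}$, with a trivial base case. Your lattice-theoretic packaging at the end (closure of $\mathcal{M}_V$, Gale order, distributivity via the embedding into $\mathbb{Z}^m$) is actually more explicit than the paper, which simply asserts that the inequality suffices.
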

\begin{proof}
It is sufficient to prove the following inequality for all $m$-element subsets $I,J\subset [m+n]$:
\begin{align}\label{logsupmod-ineq-0}
\Delta_{I}\Delta_{J}\leq \Delta_{\min(I,J)} \Delta_{\max(I,J)} \quad \mbox{over}\quad\TGr.    
\end{align}
\noindent\textbf{Claim 1:} The unions with multiplicities $I\Cup J = \max(I,J)\Cup \min(I,J).$

\noindent Every element with multiplicity 1 in $I \Cup J$ obviously appears in exactly one of $\max(I,J)$ and $\min(I,J).$ Suppose $p\in I\cap J$ and call the $p\in I$ as $p_I$ and the $p\in J$ as $p_J.$ It can be seen that if either $p_I=\max(i_k,j_k),p_J=\max(i_l,j_l)$ or $p_I=\min(i_k,j_k),p_J=\min(i_l,j_l)$ then $k=l$. This completes this proof.\medskip

Suppose $2\eta=|(I\cup J)\setminus (I\cap J)|,$ and let $\Phi:(I\cup J)\setminus (I\cap J) \to [2\eta]$ be the unique order preserving map. Consider
\begin{align}\label{logsupmod-ineq-1}
\Delta_{\Phi(I\setminus(I\cap J))}\Delta_{\Phi(J\setminus(I\cap J))}\leq \Delta_{\Phi(\min(I,J)\setminus(I\cap J))} \Delta_{\Phi(\max(I,J)\setminus(I\cap J))} \quad \mbox{over}\quad\Gr^{\geq 0}(\eta,2\eta).    
\end{align}
In light of Theorem~\ref{L:2}, \eqref{logsupmod-ineq-0} and \eqref{logsupmod-ineq-1} are simultaneously true.\medskip

\noindent\textbf{Claim 2:} If $K:=\Phi(I\setminus(I\cap J))$ and $L:=\Phi(J\setminus(I\cap J))$ then $\min(K,L)=\Phi(\min(I,J)\setminus(I\cap J))$ and $\max(K,L)=\Phi(\max(I,J)\setminus(I\cap J)).$

\noindent This follows using a similar argument as in Claim 1, since $\Phi$ is order preserving.\medskip

Therefore, henceforth we assume that $m=n$ and $I\cap J=\emptyset$ in \eqref{logsupmod-ineq-1}. We need to now show that for arbitrary consecutive integers $u,v\in [2m],$ the action of the Chevalley operation $\So_{(u,v)}$ on \eqref{logsupmod-ineq-1} yields a valid inequality.\medskip

\noindent\textbf{Case 1:} $u,v \in I$ and $u\in \max(I,J),v\in \min(I,J),$ or $u\in \min(I,J),v\in \max(I,J).$

\noindent The expression obtained after applying $\So_{(u,v)}$ over \eqref{logsupmod-ineq-1} is of the form $0 \leq \Delta_{\min(I,J)(u,v)} \Delta_{\max(I,J)(u,v)},$ which is valid. \medskip

\noindent\textbf{Case 2:} $u\in I,v\in J$ and $u\in \max(I,J),v\in \min(I,J),$ or $u\in \min(I,J),v\in \max(I,J).$

\noindent The expression obtained after applying $\So_{(u,v)}$ on \eqref{logsupmod-ineq-1} is of the form
\begin{align}\label{logsupmod-ineq-2}
\Delta_{I(u,v)}\Delta_{J(u,v)}\leq \Delta_{\min(I,J)(u,v)} \Delta_{\max(I,J)(u,v)} = \Delta_{\min(I(u,v),J(u,v))} \Delta_{\max(I(u,v),J(u,v))}.
\end{align}
Using an argument similar to Claim 2, one can see that \eqref{logsupmod-ineq-2} over $\Gr(m,2m)$ is equivalent to
\begin{align}\label{logsupmod-ineq-3}
\Delta_{I'}\Delta_{J'}\leq \Delta_{\min(I',J')} \Delta_{\max(I',J')}  \quad \mbox{over}\quad \Gr(m-1,2m-2)
\end{align}
where $I'= (i_1',\dots,i_{m-1}'),~J'=(j_1',\dots,j_{m-1}')$ with
\[
i_{k}'=\begin{cases} i_k & \mbox{ if }i_k<u, \\ i_{k}-2 & \mbox{ if } i_k> u,\end{cases} \quad\mbox{and}\quad j_{k}'=\begin{cases} j_k & \mbox{ if }i_k<v, \\ j_{k}-2 & \mbox{ if } j_k> v.\end{cases}
\]\medskip

\noindent\textbf{Case 3:} $u\in I,v\in J$ and $u,v\in \max(I,J),$ or $u,v\in \min(I,J).$

\noindent Since $u,v$ are consecutive, this case is not possible.\medskip

To summarize the arguments above: we proved that the action of any $\So_{(u,v)}$ over \ref{logsupmod-ineq-1} either yields an exprresion which is trivially valid (Case 1), or an expression of the form \eqref{logsupmod-ineq-1} itself in the smaller TNN Grassmannian (Case 2). In addition, it is easy to see that \eqref{logsupmod-ineq-1} trivially holds for $m=n=1$ as $I=\{1\},~J=\{2\}$ and $\min(I,J)=\{1\},~\max(I,J)=\{2\}.$ Therefore, by induction, inequality~\eqref{logsupmod-ineq-3} holds over $\Gr^{\geq 0}(m-1,2m-2),$ and so the expression~\eqref{logsupmod-ineq-1} is valid. This completes the proof. 
\end{proof}

Now we discuss the log-supermodularity. Define a positroid cell 
\[
\mbox{for each $V\in \TGr$ via
 }\Pi_{\mathcal{M}_V,>0}:=\big{\{} W\in \TGr: \mathcal{M}_W = \mathcal{M}_V  \big{\}}.
\]
It is well known that 
\begin{align*}
\mbox{for all }W,V\in \TGr \quad\mbox{either}&\quad \Pi_{\mathcal{M}_W,>0} = \Pi_{\mathcal{M}_V,>0}\quad \mbox{or}\quad \Pi_{\mathcal{M}_W,>0} \cap \Pi_{\mathcal{M}_V,>0}=\emptyset,\\
\mbox{and}&\quad \TGr=\cup_{V\in \TGr} \Pi_{\mathcal{M}_V,>0}.    
\end{align*}
Moreover, each positroid cell can be identified by certain minimal \textit{plabic graphs}, which in turn are identifiable via bounded affine permutations induced by certain \textit{zig-zag paths} on the minimal plabic graphs. We refer the reader to \cite{postnikov2006total, lam2015totally} for the details. Here we focus on the positroids of the positroid cells. 

Theorem~\ref{logsupmod} shows that each positroid $(\mathcal{M}_V,\max,\min)$ forms a distributive lattice. This has an immediate corollary regarding the log-supermodularity of Pl\"ucker coordinates over each positroid. Suppose $(\mathcal{L},\vee,\wedge)$ is a distributive lattice. We call a function $f:\mathcal{L}\to \R$ \textit{supermodular} provided $f(x\vee y)+ f(x\wedge v)\geq f(x)+ f(y)$ for all $x,y\in \mathcal{L}.$ A function $g:\mathcal{L}\to \R_{>0}$ is called \textit{log-supermodular} if $\log g$ is supermodular. An immediate consequence of Theorem~\ref{logsupmod} is the following.

\begin{ucor}\label{cor:I}
For $W \in \Pi_{\mathcal{M}_V,>0}$ with each $\Delta_I\geq 0,$ the map $I\mapsto \Delta_{I}(W)$ is log-supermodular on $\mathcal{M}_V.$
\end{ucor}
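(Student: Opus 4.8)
The plan is to derive Corollary~\ref{cor:I} directly from Theorem~\ref{logsupmod} together with the defining inequality~\eqref{logsupmod-ineq-0}. First I would unwind the definitions: since $(\mathcal{M}_V,\max,\min)$ is a distributive lattice by Theorem~\ref{logsupmod}, the lattice operations $\vee$ and $\wedge$ on $\mathcal{M}_V$ are precisely the entrywise $\max$ and $\min$ of index sets (written in increasing order). Fix $W\in \Pi_{\mathcal{M}_V,>0}$, which by definition means $\mathcal{M}_W=\mathcal{M}_V$, so every $I\in \mathcal{M}_V$ has $\Delta_I(W)\neq 0$, and combined with the hypothesis $\Delta_I(W)\ge 0$ we get $\Delta_I(W)>0$ for all $I\in \mathcal{M}_V$. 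Thus $g:\mathcal{M}_V\to \R_{>0}$, $g(I):=\Delta_I(W)$, is a well-defined strictly positive function on the lattice, so $\log g$ makes sense.

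Next I would verify supermodularity of $\log g$. For arbitrary $I,J\in \mathcal{M}_V$, the inequality~\eqref{logsupmod-ineq-0} from the proof of Theorem~\ref{logsupmod}, evaluated at the representative of $W$, gives
\[
\Delta_I(W)\,\Delta_J(W) \le \Delta_{\min(I,J)}(W)\,\Delta_{\max(I,J)}(W).
\]
Here I should note that $\min(I,J)$ and $\max(I,J)$ again lie in $\mathcal{M}_V$: since $\mathcal{M}_V$ is a lattice under these operations (Theorem~\ref{logsupmod}), it is closed under $\max$ and $\min$, so the right-hand side is a product of two strictly positive quantities. Taking logarithms of both sides — legitimate since all four factors are positive — yields
\[
\log g(I) + \log g(J) \le \log g(\max(I,J)) + \log g(\min(I,J)) = \log g(I\vee J) + \log g(I\wedge J),
\]
which is exactly the supermodularity condition $f(x\vee y)+f(x\wedge y)\ge f(x)+f(y)$ with $f=\log g$. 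Hence $g=\Delta_\bullet(W)$ is log-supermodular on $\mathcal{M}_V$, completing the proof.

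There is essentially no serious obstacle here; the corollary is a formal consequence once Theorem~\ref{logsupmod} is in hand. The only points requiring a line of care are (i) confirming strict positivity of $\Delta_I(W)$ on $\mathcal{M}_V$ so that the logarithm is defined and the inequality can be exponentiated/logarithmized freely, and (ii) observing that $\mathcal{M}_V$ is closed under $\max$ and $\min$ so that the terms $\log g(I\vee J)$ and $\log g(I\wedge J)$ are meaningful — both of which are immediate from Theorem~\ref{logsupmod}. The substantive content (the determinantal inequality~\eqref{logsupmod-ineq-0} over $\TGr$, valid for \emph{all} $m$-element subsets $I,J$) has already been established via the Chevalley-operation induction in the proof of Theorem~\ref{logsupmod}, so restricting it to a single point $W$ of a positroid cell is trivial.
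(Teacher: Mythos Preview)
Your proposal is correct and matches the paper's approach: the paper treats Corollary~\ref{cor:I} as an immediate consequence of Theorem~\ref{logsupmod} (and the inequality~\eqref{logsupmod-ineq-0} established in its proof), without giving a separate argument. You have simply spelled out that immediate consequence carefully, including the two bookkeeping points (strict positivity on $\mathcal{M}_V$ and closure of $\mathcal{M}_V$ under $\max,\min$) that make the logarithm and the lattice operations well-defined.
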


Corollary~\ref{cor:I} has several other applications. For one, it implies the ``numerical'' part of the main 2007 result of Lam--Postnikov--Pylyavskyy. More precisely, in \cite[Theorem~11]{LPP}, the authors show the following numerical result (which they also upgrade to Schur positivity for generalized Jacobi--Trudi matrices).

\begin{theorem}[Lam--Postnikov--Pylyavskyy \cite{LPP}]\label{LPP:thm}
Let $n,k\geq 1$ be integers, and suppose $P,Q,R,S$ be $k$-element subsets of $[n].$ Then we have 
\[
\det A_{P,Q} \det A_{R,S} \leq \det A_{\max(P,R),\max(Q,S)} \det A_{\min(P,R),\min(Q,S)}
\]
for all $n\times n$ generalized Jacobi--Trudi matrices $A$ (more generally, for all TNN matrices $A$).
\end{theorem}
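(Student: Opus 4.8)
\textbf{Proof proposal for Theorem~\ref{LPP:thm}.}

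The plan is to deduce this statement directly from Corollary~\ref{cor:I} (equivalently, from Theorem~\ref{logsupmod}) by translating the matrix-minor inequality into a Pl\"ucker-coordinate inequality over the TNN Grassmannian using the embedding \eqref{Plucker-embed} and the dictionary of Theorem~\ref{Ineq-equivalence}. First I would observe that it suffices to prove the inequality for all $n\times n$ TNN matrices $A$, since generalized Jacobi--Trudi matrices are TNN (they are, up to the well-known conventions, products of the relevant totally nonnegative building blocks, and in any case the more general TNN statement is the one we target). So fix an $n\times n$ TNN matrix $A$ and form $\overline{A} = \begin{pmatrix} A \\ W_0 \end{pmatrix}$ as in \eqref{Plucker-embed}; then each minor $\det A_{P,Q}$ equals the Pl\"ucker coordinate $\Delta_{I}(\overline A)$ with $I = P \cup \{2n+1-j : j \in [n]\setminus Q\}$, and likewise $\det A_{R,S} = \Delta_J(\overline A)$ for the analogous $J$.

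The key step is to check that the index-set operations $(\max,\min)$ appearing in Theorem~\ref{LPP:thm} — taken \emph{separately} on the row sets $P,R$ and on the column sets $Q,S$ — are carried by this embedding to the single operations $\max(I,J),\min(I,J)$ on $2n$-subsets of $[2n]$ used in Theorem~\ref{logsupmod}. Concretely, writing $I \leftrightarrow (P,Q)$ and $J \leftrightarrow (R,S)$, one wants
\[
\max(I,J) \leftrightarrow \big(\max(P,R),\ \max(Q,S)\big), \qquad \min(I,J) \leftrightarrow \big(\min(P,R),\ \min(Q,S)\big).
\]
This is a combinatorial compatibility check: the row part of $I$ lives in $[1,n]$ and the ``column'' part is the reversed-complement $\{2n+1-j : j \notin Q\}$ living in $[n+1,2n]$, and one must verify that coordinatewise comparison of the increasing sequences for $I$ and $J$ decomposes along this split, and that the reversal $j \mapsto 2n+1-j$ turns $\max$ on the complement of $Q$ into what becomes $\min$ on $Q$ after complementing again — so that $\max(Q,S)$ and $\min(Q,S)$ come out correctly. (The analogous statement is exactly the content used in Claims~1 and 2 of the proof of Theorem~\ref{logsupmod}, transported through the reversing bijection.) Once this identification is in hand, Corollary~\ref{cor:I} applied to $W = \overline A$ (with $\mathcal{M}_{\overline A}$ the positroid of $\overline A$, which contains all four index sets $I$, $J$, $\max(I,J)$, $\min(I,J)$ since, by Theorem~\ref{logsupmod} and Claim~1, these are obtained by lattice operations and the product inequality forces them into the support whenever $\Delta_I\Delta_J \neq 0$; and when $\Delta_I \Delta_J = 0$ the inequality is trivial) gives
\[
\Delta_I(\overline A)\, \Delta_J(\overline A) \le \Delta_{\max(I,J)}(\overline A)\, \Delta_{\min(I,J)}(\overline A),
\]
which by the dictionary above is precisely the asserted inequality $\det A_{P,Q}\det A_{R,S} \le \det A_{\max(P,R),\max(Q,S)} \det A_{\min(P,R),\min(Q,S)}$.

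The main obstacle I anticipate is the bookkeeping in that compatibility check: one must be careful that $\max$ and $\min$ on $[2n]$-subsets really do respect the row/column block decomposition of $I$ and $J$ (a priori a coordinatewise comparison could ``mix'' an entry of the row block of one set with an entry of the column block of the other), and that the order-reversing identification $Q \mapsto \{2n+1-j : j\notin Q\}$ swaps $\max \leftrightarrow \min$ in the expected way. This is a routine but genuinely fiddly verification; it is essentially the same verification already carried out in Claims~1 and~2 of the proof of Theorem~\ref{logsupmod} (order-preserving maps commute with $(\max,\min)$), combined with the elementary fact that an order-\emph{reversing} bijection interchanges $\max$ and $\min$. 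With that in place, the theorem follows with no further work, and the promised Schur-positivity upgrade for generalized Jacobi--Trudi matrices is exactly the stronger statement of \cite{LPP} that lies beyond what Chevalley operations give here.
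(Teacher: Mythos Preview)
Your proposal is correct and follows essentially the same route as the paper's proof, which simply says to use \eqref{Plucker-embed} to transform the inequality into \eqref{logsupmod-ineq-0} and then apply Corollary~\ref{cor:I}. You have supplied more detail on the compatibility of the $(\max,\min)$ operations with the embedding (including the order-reversal on the column block), but this is exactly the bookkeeping the paper sweeps under the phrase ``use \eqref{Plucker-embed} to transform.''
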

\begin{proof}
Use \eqref{Plucker-embed} to transform the inequality in Theorem~\ref{LPP:thm} into \eqref{logsupmod-ineq-0}. Now apply Corollary~\ref{cor:I}.
\end{proof}

Theorem~\ref{LPP:thm} is a powerful result, with many consequences, e.g. Okounkov's log-concavity conjecture \cite{Oko} for skew Schur polynomials -- numerically, i.e.\ when evaluated in finitely many variables on the positive orthant.

We end this part by mentioning another application of the log-supermodularity in Corollary~\ref{cor:I}.

\begin{theorem}[Khare--Tao \cite{KT}]
See \cite{KT} for notations; let $n\geq 1$ be an integer and suppose $\lambda \supseteq \mu$ are two partitions. Then the Schur polynomial ratio
\[\frac{s_{\lambda}(u_1,\dots,u_n)}{s_{\mu}(u_1,\dots,u_n)}, \qquad u_1,\dots,u_n\in (0,\infty)
\]
is non-decreasing in each variable. More generally, let $\lambda \geq \mu \geq {\bf 0}$ be real tuples, then the map 
$$
{\bf u} \mapsto \frac{\det (u_i^{\lambda_j})_{i,j=1}^n}{\det (u_i^{\mu_j})_{i,j=1}^n}, \qquad {\bf u}\in (0,\infty)^n_{\neq}
$$
is non-decreasing in each variable, where $S^n_{\neq}$ denotes ordered $n$-tuples with pairwise distinct coordinates in $S$.
\end{theorem}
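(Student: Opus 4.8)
The plan is to deduce this from Corollary~\ref{cor:I} via the embedding \eqref{Plucker-embed}, in exactly the same spirit as the proof of Theorem~\ref{LPP:thm}. First I would recall that, for real tuples $\lambda \geq \mu \geq {\bf 0}$ and fixed $\mathbf{u} \in (0,\infty)^n_{\neq}$, the generalized Vandermonde (exponential) matrix $(u_i^{\lambda_j})_{i,j}$ is totally positive; this is the classical fact that $x \mapsto x^{t}$ for $t>0$ applied columnwise to a positive increasing sequence produces a TP matrix (a standard consequence of the sign-regularity of the kernel $(x,y)\mapsto x^y$, or of the Cauchy--Binet/Schur-polynomial positivity expansion). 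Rescaling so that $0 < u_1 < \dots < u_n$ (possible by symmetry of the ratio under permuting coordinates), both $\det(u_i^{\lambda_j})$ and $\det(u_i^{\mu_j})$ are positive, so the ratio is well defined and positive.

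Next I would fix a coordinate index, say $u_1$, and two values $u_1 < u_1'$ (after ordering), and aim to show
\[
\frac{\det (u_i^{\lambda_j})}{\det (u_i^{\mu_j})}\Bigg|_{u_1}\;\leq\;\frac{\det (u_i^{\lambda_j})}{\det (u_i^{\mu_j})}\Bigg|_{u_1'}.
\]
Clearing denominators, this is a single determinantal inequality of Lam--Postnikov--Pylyavskyy type. The key observation is that the four matrices appearing — obtained by choosing for each row either $u_1$ or $u_1'$ in the first slot — all sit inside one $(n+1)\times(n+1)$ TNN matrix $B$, namely the exponential matrix with nodes $u_1 < u_1' < u_2 < \dots < u_n$ (again TP by the classical fact). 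Writing $P = \{1,3,4,\dots,n+1\}$ (rows using $u_1$) and $R = \{2,3,4,\dots,n+1\}$ (rows using $u_1'$), with the column sets $Q,S$ encoding $\lambda,\mu$ respectively, one checks $\max(P,R) = R$, $\min(P,R) = P$, while for the $\mu$-columns the analogous row sets also satisfy the same lattice identities. Thus the desired inequality is precisely $\det B_{P,Q}\det B_{R,S} \leq \det B_{\max(P,R),\max(Q',S')}\det B_{\min(P,R),\min(Q',S')}$ after matching up the $\lambda$ and $\mu$ column indices into a common ground set; this is the $k$-subset log-supermodularity statement, which follows from Corollary~\ref{cor:I} applied to the Plücker embedding $\overline{B}$ of $B$ as in \eqref{Plucker-embed}. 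Iterating over the remaining coordinates $u_2,\dots,u_n$ (or invoking symmetry) gives monotonicity in every variable.

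The main obstacle — really a bookkeeping subtlety rather than a deep point — will be arranging the column index sets so that the $\lambda$-columns and $\mu$-columns live in a single totally ordered ground set for which the $(\max,\min)$ lattice operations return exactly the pair we want, and simultaneously the row index sets do so; one must verify that swapping $u_1 \leftrightarrow u_1'$ in a chosen subset of rows corresponds, after relabeling, to the $\max/\min$ of two $k$-subsets with a \emph{common} complementary structure, so that the two-factor inequality \eqref{logsupmod-ineq-0} (equivalently Corollary~\ref{cor:I}) applies verbatim. A clean way to sidestep any awkwardness is to prove the two-variable case $n$ arbitrary but only comparing at coordinate $u_1$ (holding the rest fixed), reduce to the exponential TP matrix on $n+1$ ordered nodes, and then note that the entries $u_i^{\lambda_j}$, $u_i^{\mu_j}$ for the \emph{same} $u_i$ but different exponents can be placed in disjoint column blocks so that $\max$ and $\min$ act independently on the $\lambda$-block and $\mu$-block. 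Once that reduction is in place, the inequality is an immediate instance of log-supermodularity of Plücker coordinates on the positroid containing $\overline{B}$, and the proof concludes by induction / symmetry over the coordinates.
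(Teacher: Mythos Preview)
Your overall plan—reduce the two-point comparison to a single Lam--Postnikov--Pylyavskyy inequality and invoke Corollary~\ref{cor:I}—is sound, but it is not the paper's route. The paper instead differentiates: by the quotient rule, monotonicity of $f_\lambda/f_\mu$ in the last variable amounts to $f_\mu f_\lambda' - f_\mu' f_\lambda \geq 0$, which it records as Proposition~\ref{prop:KT} and then deduces (``modulo some bookkeeping'') from Corollary~\ref{cor:I}. Both arguments ultimately land on the same log-supermodularity statement; yours avoids calculus and applies directly to the real-exponent version, while the paper's yields the slightly sharper coefficientwise positivity of Proposition~\ref{prop:KT}.

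There is, however, one genuine gap in your writeup. Your proposed ``clean way''—placing the $\lambda$-columns and $\mu$-columns in \emph{disjoint blocks} so that $\max$ and $\min$ act independently—does not work: the matrix $(v_i^{e_j})$ is TNN only when the column exponents $e_j$ are monotone, and block-separating $\lambda$ from $\mu$ generically unsorts them, destroying total nonnegativity, so Theorem~\ref{LPP:thm}/Corollary~\ref{cor:I} no longer applies to that matrix. The fix is to carry out the bookkeeping you flagged rather than sidestep it: merge all $2n$ exponents into one increasing list (perturb to break ties, then pass to the limit), let $Q,S\subset[2n]$ be the position sets of the $\lambda$- and $\mu$-values, and verify $\max(Q,S)=Q$, $\min(Q,S)=S$ directly. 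This is immediate from $\lambda_j\ge\mu_j$: for every threshold $t$ one has $|\{k:\mu_k\le t\}|\ge|\{k:\lambda_k\le t\}|$, so the $i$-th smallest $\lambda$-value sits weakly to the right of the $i$-th smallest $\mu$-value in the merged list. With that verification (and the analogous, easier check on the row sets $P,R$), your argument goes through.
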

This theorem was used to (perhaps surprisingly) obtain sharp quantitative estimates for classes of polynomials preserving positive semidefinite matrices of a fixed size, when applied entrywise. The result follows (by the quotient rule for differentiation$!$) from:
\begin{prop}[Khare--Tao \cite{KT}]\label{prop:KT}
Let $n\geq 1$ be an integer and suppose $\lambda \supseteq \mu$ are two partitions. Fix scalars $u_1,\dots,u_{n-1}\in (0,\infty),$ and define 
    \[
f_\lambda(x):=s_{\lambda}(u_1,\dots,u_{n-1},x)    
    \]
    and similarly define $f_{\mu}(x).$ Then the polynomial $f_{\mu}f_{\lambda}' - f_{\mu}'f_{\lambda}$ has nonnegative real coefficients.
\end{prop}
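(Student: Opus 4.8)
The plan is to reduce Proposition~\ref{prop:KT} to a single family of two-minor inequalities and then invoke Theorem~\ref{LPP:thm}, which the paper derives from Corollary~\ref{cor:I} (hence from Theorem~\ref{logsupmod}). Write $f_\lambda(x)=\sum_{k\geq 0}a_k x^k$ and $f_\mu(x)=\sum_{\ell\geq 0}b_\ell x^\ell$ (each $a_k,b_\ell$ is a Schur-positive evaluation at the $u_i>0$, hence $\geq 0$, although this will not be strictly needed below). A direct computation gives
\[
f_\mu f_\lambda'-f_\mu' f_\lambda \;=\; \sum_{m\geq 1}\Bigl(\,\sum_{k+\ell=m}(k-\ell)\,a_k b_\ell\Bigr)x^{m-1},
\]
and pairing the terms indexed by $(k,\ell)$ and $(\ell,k)$ shows the coefficient of $x^{m-1}$ equals $\sum_{k>\ell,\ k+\ell=m}(k-\ell)\,(a_k b_\ell-a_\ell b_k)$. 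Since $k-\ell>0$ in each summand, it suffices to prove $a_k b_\ell\geq a_\ell b_k$ whenever $k\geq\ell$.

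The next step identifies the coefficients $a_k,b_\ell$ with minors of one fixed totally nonnegative matrix. By the branching rule for Schur polynomials (expansion in the last variable), $a_k=[x^k]\,s_\lambda(u_1,\dots,u_{n-1},x)=s_{\lambda/(k)}(u_1,\dots,u_{n-1})$, the skew Schur polynomial of the one-row skew shape $\lambda/(k)$ (and $=0$ when $(k)\not\subseteq\lambda$), and likewise $b_\ell=s_{\mu/(\ell)}(u_1,\dots,u_{n-1})$. Fix $N$ at least the number of parts of $\lambda$, and for any partition $\nu$ with at most $N$ parts set $P(\nu):=\{\,i-\nu_i:1\leq i\leq N\,\}$, an increasing $N$-element set of integers. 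The skew Jacobi--Trudi identity then reads $s_{\lambda/(k)}(u)=\det H_{P(\lambda),\,P((k))}$, where $H:=\bigl(h_{j-i}(u_1,\dots,u_{n-1})\bigr)_{i,j}$ is the (generalized Jacobi--Trudi, i.e.\ Toeplitz) matrix of complete homogeneous symmetric polynomials, and $(k)=(k,0,\dots,0)$ so that $P((k))=\{\,1-k,\,2,\,3,\dots,\,N\,\}$. Since the $u_i$ are positive, $H$ is totally nonnegative: classically, $\sum_r h_r(u)\,t^r=\prod_i (1-u_i t)^{-1}$ exhibits $H$ as a product of totally nonnegative Toeplitz matrices. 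After translating all index sets by a large constant so that they lie in $[1,M]$, the relevant finite window of $H$ is a square totally nonnegative matrix.

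With this in place the proof is a single application. Apply Theorem~\ref{LPP:thm} to $A=H$ with row sets $P(\lambda),P(\mu)$ and column sets $P((\ell)),P((k))$. Because $\mu\subseteq\lambda$ we have $i-\mu_i\geq i-\lambda_i$ for every $i$, so componentwise $\max\bigl(P(\lambda),P(\mu)\bigr)=P(\mu)$ and $\min\bigl(P(\lambda),P(\mu)\bigr)=P(\lambda)$; because $k\geq\ell$ the sets $P((\ell))$ and $P((k))$ differ only in their smallest entry ($1-\ell$ versus $1-k$), so $\max\bigl(P((\ell)),P((k))\bigr)=P((\ell))$ and $\min\bigl(P((\ell)),P((k))\bigr)=P((k))$. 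Substituting into Theorem~\ref{LPP:thm} yields
\[
\det H_{P(\lambda),P((\ell))}\cdot\det H_{P(\mu),P((k))}\;\leq\;\det H_{P(\mu),P((\ell))}\cdot\det H_{P(\lambda),P((k))},
\]
that is, $a_\ell b_k\leq a_k b_\ell$ for $k\geq\ell$ --- exactly the inequality needed in the first step. Hence every coefficient of $f_\mu f_\lambda'-f_\mu' f_\lambda$ is a nonnegative real number, proving Proposition~\ref{prop:KT}; combined with the quotient rule this recovers the Khare--Tao monotonicity theorem.

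The main obstacle I anticipate is the index-set bookkeeping in the middle two steps: reading off the Jacobi--Trudi row and column sets correctly (in particular $P((k))$ for the one-row partition), verifying that the corresponding minor is genuinely $s_{\lambda/(k)}(u)$ including the vanishing when $(k)\not\subseteq\lambda$, and --- most importantly --- confirming that the assignment $\nu\mapsto P(\nu)$ is order-compatible enough that the $(\min,\max)$ operations in Theorem~\ref{LPP:thm} land precisely on the ``swapped'' pair $(a_k b_\ell,\,a_\ell b_k)$ rather than on unrelated partitions; this is exactly where the hypothesis $\lambda\supseteq\mu$ is used, and a wrong orientation of the index sets would reverse the inequality. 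Total nonnegativity of $H$ is classical and needs only a citation. Because the whole argument is funneled through Theorem~\ref{LPP:thm}, this presentation realizes Proposition~\ref{prop:KT}, and therefore the Khare--Tao ratio-monotonicity theorem, as a clean consequence of the log-supermodularity of Pl\"ucker coordinates obtained via Chevalley operations in Theorem~\ref{logsupmod} and Corollary~\ref{cor:I}.
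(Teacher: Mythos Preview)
Your argument is correct and is precisely the ``bookkeeping'' the paper alludes to but omits: the paper does not actually prove Proposition~\ref{prop:KT}, it only remarks that (as explained in Khare--Tao) the result is a straightforward consequence of Corollary~\ref{cor:I}, and you have supplied exactly that derivation, routing it through Theorem~\ref{LPP:thm} (which the paper obtains from Corollary~\ref{cor:I} in two lines). The reduction to $a_k b_\ell\ge a_\ell b_k$, the identification $a_k=s_{\lambda/(k)}(u)=\det H_{P(\lambda),P((k))}$ via the skew Jacobi--Trudi formula, and the $(\min,\max)$ check using $\mu\subseteq\lambda$ and $k\ge\ell$ all go through as you wrote them, so your worry about orientation is unfounded.
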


As explained by Khare--Tao~\cite{KT}, Proposition~\ref{prop:KT} is (modulo some bookkeeping) a straightforward consequence of Corollary~\ref{cor:I} / Lam~\cite{lam2015totally}.

\subsection{Ordering the induced character immanants}\label{BJsection}

A weakly decreasing positive integer sequence $\lambda:=(\lambda_1,\dots,\lambda_\ell)$ is called a partition of $n$ if $\lambda_1+\dots+\lambda_\ell = n;$ denote this by $\lambda \vdash n.$ Recall the definition of an $f$-immanant in \eqref{eq:immdef}. Given a partition $\lambda\vdash n,$ suppose $\chi^{\lambda} \equiv \sgn \uparrow_{\mathrm{S}_{\lambda}}^{\mathrm{S}_{n}}$ is the \textit{induced sign character}, where $\mathrm{S}_{\lambda}$ is the Young subgroup of $\sn$ indexed by $\lambda.$ The induced character $\chi^{\lambda}$ yields the \textit{induced character immanant}, the simple formula for which is given by Littlewood--Merris--Watkins \cite{LittlewoodTGC,MerrisWatkins}:
\begin{align*}
\imm_{\chi^{\lambda}}(\mathsf{{\bf x}})=\sum_{(I_1,\dots,I_\ell)} \det {\mathsf{{\bf x}}}_{I_1,I_1}\dots \det {\mathsf{{\bf x}}}_{I_\ell,I_\ell} \quad \mbox{for}\quad\mathsf{{\bf x}}=(x_{ij})_{n\times n}
\end{align*}
where $\lambda=(\lambda_1,\dots,\lambda_\ell) \vdash n$ and the sum is over all pairwise disjoint subsets $I_j \subset [n]$ with $|I_j|=\lambda_j.$ Define a partial order $\preceq$ on partitions of $n$ via the \textit{majorization order} (padding partitions by trailing zeros to equalize their lengths) $\lambda = (\lambda_1,\dots,\lambda_r) \preceq \mu = (\mu_1,\dots,\mu_s)$ provided $\sum_{k=1}^{j}\lambda_k \leq \sum_{k=1}^{j}\mu_k$ for all $j.$ Due to the works of Barrett--Johnson \cite{barrett1993majorization} and Skandera--Soskin \cite{skan2022sosk},  we know that majorization yields a partial order on induced character immanants,
\begin{align}\label{bj-eqn2}
\lambda \preceq \mu  \quad \implies \quad {\binom{n}{\lambda_1,\dots,\lambda_r}}^{-1} \imm_{\chi^{\lambda}} (A) ~ \geq ~ {\binom{n}{\mu_1,\dots,\mu_s}}^{-1} \imm_{\chi^{\mu}} (A)
\end{align}
for all $A_{n\times n}$ real positive semidefinite and totally nonnegative matrices. 
It is known that if $\lambda \preceq \mu$ then there exists a sequence of partitions $\lambda=:\delta_0 \preceq \delta_1 \preceq \dots \preceq \delta_k:=\mu$ such that each pair $\delta_j,\delta_{j+1}$ differs only at two coordinates. Therefore, via telescoping, the proof of \eqref{bj-eqn2} reduces to the special case $\lambda=(k,n-k)$ and $\mu=(k+1,n-k-1)$ for all $k=0,1,\dots, \floor{n/2}-1.$ 

We now provide a new proof of this special case (and hence of of \eqref{bj-eqn2}) for TNN matrices, using Chevalley operations (which are akin to cluster mutations, in contrast to the combinatorial proof in \cite{skan2022sosk}). 
We begin by reformulating it using \eqref{Plucker-embed} and Theorem~\ref{Ineq-equivalence}:


\begin{utheorem}\label{T:BJ}
Given integers $n\geq 2$ and $0\leq k\leq n,$ define 
\begin{align}\label{defn:BJ:1}
\In_{n,k}:=\big{\{}I:=P\cup \big{(} [n+1,2n]\setminus (2n+1-P) \big{)}: P\subseteq [n] \mbox{ and }|P|=k\big{\}}.
\end{align}
Then for any $n\geq 2$ and $0\leq k\leq \floor{n/2}-1,$ we have
\begin{align}\label{T:BJ:Eqn:1}
{\binom{n}{k}}^{-1}\sum_{I\in \In_{n,k}} \Delta_{I} \Delta_{I^{\mathsf{c}}} \leq {\binom{n}{k+1}}^{-1}\sum_{I\in \In_{n,k+1}} \Delta_{I} \Delta_{I^{\mathsf{c}}} \quad\mbox{over}\quad \Gr^{\geq 0}(n,2n). 
\end{align}
\end{utheorem}

\begin{example}\label{example:new:added}
Here we discuss \eqref{T:BJ:Eqn:1} for small values of $n.$ Suppose $n=2;$ then $N(2):=\floor{n/2}-1=0,$ and therefore $k=0$ and $k+1=1.$ The corresponding \eqref{T:BJ:Eqn:1} in the matrix minor form is:
\begin{align*}
    \binom{2}{0}^{-1}\det A \leq \binom{2}{1}^{-1}(a_{11}a_{22}+a_{22}a_{11}) \iff \det A \leq a_{11}a_{22}
\end{align*}
for all $2\times 2$ $A:=(a_{ij})$ TNN. This is easy to verify.\medskip

Now suppose $n=3.$ Then $N(3):=\floor{3/2}-1=0$; so $k=0$ and $k+1=1.$ Therefore the corresponding \eqref{T:BJ:Eqn:1} in the matrix determinant form is the following, where each $A_{K}$ denotes the principal submatrix indexed by $K\times K$:
\begin{align*}
    \det A &\leq \frac{1}{3}\big{(} a_{11}\det A_{23}+a_{22}\det A_{13}+ a_{33} \det A_{12}\big{)}
\end{align*}
for all $3\times 3$ TNN $A:=(a_{ij})$. To show that this forms an inequality we use Theorems~\ref{L:3} and \ref{L:2} over its ``Pl\"ucker coordinated'' form (see \eqref{Plucker-embed} and Theorem~\ref{Ineq-equivalence}) given by:
\begin{align*}
    \Delta_{123}\Delta_{456}\leq \frac{1}{3}\big{(} \Delta_{145}\Delta_{236} + \Delta_{246}\Delta_{135}+\Delta_{356}\Delta_{124}. \big{)}
\end{align*}
To show that the above forms an inequality over $\Gr^{\geq}(3,6)$ we apply Theorem~\ref{L:3} on it. This entails applying composite Chevalley operations $\So_{(v^*,u^*)}\circ\So_{(u,v)}$ on it for consecutive $u,v\in \{1,2,3,4\}.$ If the consecutive $u,v\in \{1,2,3\}$ then the expression on the lower side of the inequality vanishes, and we are left with the inequality involving products of pairs of Pl\"ucker coordinates on the higher side, which is always true over a TNN Grassmannian. So we consider the case when $u=3$ and $v=4$ (without loss of generality). Applying $\So_{(3,4)}\circ\So_{(3,4)}$ on the immediately above expression gives:
\begin{align*}
    \Delta_{124}\Delta_{456}\leq \frac{1}{3}\big{(} \Delta_{145}\Delta_{246} + \Delta_{246} \Delta_{145} + \Delta_{456}\Delta_{124} \big{)}.
\end{align*}
Using Theorem~\ref{L:2} we have that the above is equivalent to the following over $\Gr^{\geq 0}(2,4)$:
\begin{align*}
    \Delta_{12}\Delta_{34} \leq \frac{1}{3}\big{(} \Delta_{13}\Delta_{24} + \Delta_{24}\Delta_{13}+\Delta_{34}\Delta_{12} \big{)}.
\end{align*}
The above expression in the determinant form looks like the following for all $2\times 2$ TNN $A=(a_{ij})$:
\begin{align*}
    \det A \leq \frac{1}{3}\big{(} 2a_{11}a_{22}+\det A \big{)} \iff \det A \leq a_{11}a_{22},
\end{align*}
and so it holds, and therefore the expression that we started with over $\Gr^{\geq 0}(3,6)$ holds. This completes the $n=3$ case.\medskip

Next, let $n=4.$ Then $N(4):=\floor{4/2}-1=1$; so $k=0,1.$ We first consider the $k=0$ case. The expression~\eqref{T:BJ:Eqn:1} in the matrix determinant form looks like:
\begin{align*}
    \det A \leq \frac{1}{4}\big{(} a_{11} \det A_{234} + a_{22}\det A_{134} + a_{33}\det A_{124} + a_{44}\det A_{123} \big{)}
\end{align*}
for all $4\times 4$ TNN $A:=(a_{ij}).$ The Pl\"ucker coordinated version of the above expression is over $\Gr^{\geq 0}(4,8)$ and is given by:
\begin{align*}
    \Delta_{1234}\Delta_{5678}\leq \frac{1}{4} \big{(} \Delta_{1567} \Delta_{2348} + \Delta_{2568}\Delta_{1347} + \Delta_{3578} \Delta_{1246} + \Delta_{4678}\Delta_{1235} \big{)}.
\end{align*}
To show that the above is valid over $\Gr^{\geq 0}(4,8)$ we use Theorem~\ref{L:3}; it is easy to see that applying $\So_{(v^*,u^*)}\circ \So_{(u,v)}$ over the above for any consecutive $u,v\in \{1,2,3,4\}$ yields an expression with the lower side is zero; which implies that the inequality is trivially true. So we assume that $u=4$ and $v=5$ (without loss of generality). Applying $\So_{(4,5)}\circ \So_{(4,5)}$ on the expression above gives:
\begin{align*}
    \Delta_{1235}\Delta_{5678} \leq \frac{1}{4} \big{(} \Delta_{1567}\Delta_{2358} + \Delta_{2568}\Delta_{1357} + \Delta_{3578}\Delta_{1256} + \Delta_{5678}\Delta_{1235} \big{)}.
\end{align*}
Via Theorem~\ref{L:2} we write an equivalent expression of the above over $\Gr^{\geq 0}(3,6)$ given by:
\begin{align*}
    \Delta_{123}\Delta_{456} &\leq \frac{1}{4}\big{(} \Delta_{145}\Delta_{236} + \Delta_{246} \Delta_{135} + \Delta_{356}\Delta_{124} + \Delta_{456}\Delta_{123}\big{)} \\
    \iff \Delta_{123}\Delta_{456} &\leq \frac{1}{3}\big{(} \Delta_{145}\Delta_{236} + \Delta_{246} \Delta_{135} + \Delta_{356}\Delta_{124}\big{)}.
\end{align*}
This is an inequality over $\Gr^{\geq 0}(3,6)$ which we have already proved above. This completes the $k=0$ case for $n=4.$ We next proceed for the $k=1$ case. The expression (that we wish to prove) for all $4\times 4$ TNN $A=(a_{ij})$ is given by
\begin{align*}
    &\frac{1}{4}\big{(} a_{11}\det A_{234} + a_{22}\det A_{134} + a_{33}\det A_{124} + a_{44}\det A_{123}\big{)}\\ &\quad \leq \frac{1}{3}\big{(} \det A_{12} \det A_{34} + \det A_{13} \det A_{24} + \det A_{14} \det A_{23}\big{)}.
\end{align*}
This converted into Pl\"ucker coordinated form looks like the following over $\Gr^{\geq 0}(4,8)$:
\begin{align*}
    &\frac{1}{4}\big{(} \Delta_{1567} \Delta_{2348} + \Delta_{2568}\Delta_{1347} + \Delta_{3578} \Delta_{1246} + \Delta_{4678} \Delta_{1235} \big{)} \\
    &\quad\leq \frac{1}{3} \big{(} \Delta_{1256} \Delta_{3478} + \Delta_{1357}\Delta_{2468} + \Delta_{1467}\Delta_{2358} \big{)}
\end{align*}
As we did before, apply $\So_{(7,8)}\circ \So_{(1,2)}$ on above, to obtain:
\begin{align*}
    \frac{1}{4} \big{(} \Delta_{2568} \Delta_{2348} + \Delta_{2568} \Delta_{2348} \big{)} \leq \frac{1}{3}\big{(} \Delta_{2358}\Delta_{2468} + \Delta_{2468}\Delta_{2358} \big{)}
\end{align*}
Employing Theorem~\ref{L:2} on above, we get the following expression over $\Gr^{\geq 0}(2,4)$:
\begin{align*}
    \frac{1}{2}\Delta_{12}\Delta_{34} \leq \frac{2}{3}\Delta_{13}\Delta_{24} \iff \det A \leq (4/3) a_{11}a_{22},    
\end{align*}
for all $2\times 2$ TNN $A,$ which we know is true. Similar is the case when applying $\So_{(6,7)}\circ \So_{(2,3)}$ and $\So_{(5,6)}\circ \So_{(3,4)},$ so skip the details, and proceed to apply $\So_{(4,5)}\circ \So_{(4,5)},$ and obtain the following expression over $\Gr^{\geq 0}(4,8)$:
\begin{align*}
    &\frac{1}{4}\big{(} \Delta_{1567} \Delta_{2358} + \Delta_{2568}\Delta_{1357} + \Delta_{3578} \Delta_{1256} + \Delta_{5678} \Delta_{1235} \big{)} \\
    &\quad\leq \frac{1}{3} \big{(} \Delta_{1256} \Delta_{3578} + \Delta_{1357}\Delta_{2568} + \Delta_{1567}\Delta_{2358} \big{)}
\end{align*}
Now employ Theorem~\ref{L:2} to write the equivalent expression in the smaller dimension:
\begin{align*}
    &\frac{1}{4}\big{(} \Delta_{145} \Delta_{236} + \Delta_{246}\Delta_{135} + \Delta_{356} \Delta_{124} + \Delta_{456} \Delta_{123} \big{)} \\
    &\quad\leq \frac{1}{3} \big{(} \Delta_{124} \Delta_{356} + \Delta_{135}\Delta_{246} + \Delta_{145}\Delta_{236} \big{)}
\end{align*}
over $\Gr^{\geq 0}(3,6).$ Again, this is an expression which we have proved already. This completes the $n=4$ case. We now explain why the general case holds.
\end{example}

\begin{proof}[Proof of Theorem~\ref{T:BJ}]
Recall Definition~\ref{defn:main:3}, and observe that $\In_{n,k},\In_{n,k+1}\subseteq \In_n.$ Theorem~\ref{L:3} implies that inequality~\eqref{T:BJ:Eqn:1} holds if and only if the action of composite operation $\So_{(v^*,u^*)}\circ \So_{(u,v)}$ on \eqref{T:BJ:Eqn:1}, for all consecutive $u,v\in [n+1],$ yields a valid inequality. In light of Example~\ref{example:new:added} we assume that \eqref{T:BJ:Eqn:1} holds for all smaller values of $n.$ Using arguments similar to the $n=2,3,4$ and $k=0$ in Example~\ref{example:new:added} one can verify \eqref{T:BJ:Eqn:1} for $k=0$ and larger $n$; we skip these details. We address the cases when $k\geq 1$ and $n\geq 5$. $\So_{(v^*,u^*)}\circ\So_{(u,v)}$ applied on \eqref{T:BJ:Eqn:1} yields
\begin{align}\label{T:BJ:Eqn:2}
\frac{1}{\binom{n}{k}}\sum_{I\in (\In_{n,k} (u,v))(v^*,u^*)}& \Delta_{(I(u,v))(v^*,u^*)} \Delta_{(I^{\mathsf{c}}(u,v))(v^*,u^*)}\\
&\hspace*{2cm}\leq \frac{1}{\binom{n}{k+1}}\sum_{I\in (\In_{n,k+1} (u,v))(v^*,u^*)} \Delta_{(I(u,v))(v^*,u^*)} \Delta_{(I^{\mathsf{c}}(u,v))(v^*,u^*)},\nonumber
\end{align}
where $(\In_{n,\ell}(u,v))(v^*,u^*),$ $\ell \in \{k,k+1\},$ refers to those $\ell$ element subsets $P$ (see~\eqref{defn:BJ:1}) such that 
\begin{itemize}
    \item either $u\in P$ and $v\not\in P$ or $v\in P$ and $u\not\in P$; when consecutive $u,v\in [n].$
    \item either $n\in P$ or $n\not\in P$; when $u=n$ and $v=n+1$ (without loss of generality).
\end{itemize}
\noindent\textbf{Case 1:} $u,v\in [n].$ We write \eqref{T:BJ:Eqn:1} for $n'=n-2$ and $k'=k-1$: 
\begin{align}\label{T:BJ:new1}
\frac{1}{\binom{n-2}{k-1}}\sum_{I\in\In_{n-2,k-1}} \Delta_{I} \Delta_{I^{\mathsf{c}}} \leq \frac{1}{\binom{n-2}{k}}\sum_{I\in\In_{n-2,k}} \Delta_{I} \Delta_{I^{\mathsf{c}}} \quad\mbox{over}\quad \Gr^{\geq 0}(n-2,2n-4).
\end{align}
Since $k\leq \frac{n}{2} - 1$ we have $
\frac{k+1}{k}\cdot\frac{n-k-1}{n-k} \geq \frac{n^2}{n^2-4} > 1.$ Using this, \eqref{T:BJ:new1} is appended, and we have
\begin{align*}
\frac{1}{\binom{n-2}{k-1}}\sum_{I\in\In_{n-2,k-1}} \Delta_{I} \Delta_{I^{\mathsf{c}}} \leq \frac{1}{\binom{n-2}{k}}\sum_{I\in\In_{n-2,k}} \Delta_{I} \Delta_{I^{\mathsf{c}}} \leq \frac{\frac{k+1}{k}\cdot\frac{n-k-1}{n-k}}{\binom{n-2}{k}}\sum_{I\in\In_{n-2,k}} \Delta_{I} \Delta_{I^{\mathsf{c}}}.
\end{align*}
This implies,
\begin{eqnarray}\label{T:BJ:Eqn:3}
&  \frac{1}{\binom{n-2}{k-1}} \sum_{I\in\In_{n-2,k-1}} \Delta_{I} \Delta_{I^{\mathsf{c}}} & \leq \frac{\frac{k+1}{k}\cdot \frac{n-k-1}{n-k}}{\binom{n-2}{k}}\sum_{I\in\In_{n-2,k}} \Delta_{I} \Delta_{I^{\mathsf{c}}} \nonumber \\ 
\iff &\frac{1}{\frac{n(n-1)}{k(n-k)}\binom{n-2}{k-1}}\sum_{I\in\In_{n-2,k-1}} \Delta_{I} \Delta_{I^{\mathsf{c}}} &\leq \frac{1}{\frac{n(n-1)}{(k+1)(n-k-1)}\binom{n-2}{k}}\sum_{I\in\In_{n-2,k}} \Delta_{I} \Delta_{I^{\mathsf{c}}} \nonumber \\
\iff & \frac{1}{\binom{n}{k}} \sum_{I\in\In_{n-2,k-1}} \Delta_{I} \Delta_{I^{\mathsf{c}}} & \leq \frac{1}{\binom{n}{k+1}} \sum_{I\in\In_{n-2,k}} \Delta_{I} \Delta_{I^{\mathsf{c}}}\quad\mbox{over}\quad \Gr^{\geq 0}(n-2,2n-4).
\end{eqnarray}
Using Theorems~\ref{L:2} and \ref{L:3} we can see that inequality \eqref{T:BJ:Eqn:2} is equivalent to inequality~\eqref{T:BJ:Eqn:3}. \medskip

\noindent\textbf{Case 2:} $u=n$ and $v =n+1.$ It is not very difficult to see that equation~\eqref{T:BJ:Eqn:2} can be written as the following over $\Gr^{\geq 0}(n-1,2n-2)$:
\begin{align}\label{T:BJ:Case2:new}
\binom{n}{k}^{-1} \bigg{(} \sum_{I\in \In_{n-1,k}} \Delta_I \Delta_{I^\mathsf{c}} + \sum_{I\in \In_{n-1,k-1}} \Delta_I \Delta_{I^\mathsf{c}} \bigg{)} \leq \binom{n}{k+1}^{-1} \bigg{(} \sum_{I\in \In_{n-1,k+1}} \Delta_I \Delta_{I^\mathsf{c}} + \sum_{I\in \In_{n-1,k}} \Delta_I \Delta_{I^\mathsf{c}} \bigg{)}.
\end{align}
We claim that the above inequality is true. We introduce notation for ease: define $$\mathcal{E}(n,k):=\sum_{I\in \In_{n,k}}\Delta_I \Delta_{I^{\mathsf{c}}}.$$

\noindent\textbf{Case 2(a):} Suppose $k$ is sufficiently small, i.e., $k\leq \floor{\frac{n-1}{2}}-1.$ Then by induction we have that 
\begin{align}\label{T:BJ:Case2:new:1}
    \binom{n-1}{k-1}^{-1}\mathcal{E}(n-1,k-1)&\leq \binom{n-1}{k+1}^{-1}\mathcal{E}(n-1,k+1)\nonumber\\ \iff     \mathcal{E}(n-1,k-1)&\leq \frac{k(k+1)}{(n-k)(n-k-1)}\mathcal{E}(n-1,k+1) 
\end{align}
over $\Gr^{\geq 0}(n-1,2n-2).$ Similarly we have
\begin{align}\label{T:BJ:Case2:new:2}
    \mathcal{E}(n-1,k)\leq \frac{k+1}{n-k-1}\mathcal{E}(n-1,k+1)
\end{align}
over $\Gr^{\geq 0}(n-1,2n-2).$ Equation~\eqref{T:BJ:Case2:new} can be re-written as
\begin{align*}
    \binom{n}{k+1}\binom{n}{k}^{-1}\mathcal{E}(n-1,k-1) + \bigg{(}\binom{n}{k+1}\binom{n}{k}^{-1}-1\bigg{)}\mathcal{E}(n-1,k) &\leq \mathcal{E}(n-1,k+1)\\
    \iff 
    \frac{n-k}{k+1}\mathcal{E}(n-1,k-1) + \frac{n-2k-1}{k+1}\mathcal{E}(n-1,k) &\leq \mathcal{E}(n-1,k+1)
\end{align*}
To verify this equivalent expression, use the two inequalities in \eqref{T:BJ:Case2:new:1} and \eqref{T:BJ:Case2:new:2} and substitute the left hand side. This completes the proof via induction.\medskip

\noindent\textbf{Case 2(b):} Suppose $k$ is not small enough, i.e., $n$ is even and $k=(n/2)-1.$ Then \eqref{T:BJ:Case2:new} simplifies into the following over $\Gr^{\geq 0}(n-1,2n-2)$:
\begin{align*}
    \binom{n}{k}^{-1}\big{(}\mathcal{E}(n-1,k) + \mathcal{E}(n-1,k-1) \big{)} &\leq 2\binom{n}{k+1}^{-1}\mathcal{E}(n-1,k)\\
    \iff \mathcal{E}(n-1,k-1)&\leq \bigg{(}2\binom{n}{k}\binom{n}{k+1}^{-1}-1\bigg{)}\mathcal{E}(n-1,k).
\end{align*}
Since $2k-n+2=0,$ a direct computation shows that $2\binom{n}{k}\binom{n}{k+1}^{-1}-1 = \binom{n-1}{k-1}\binom{n-1}{k}^{-1},$ which completes the proof via induction.
\end{proof}

We conclude this section with a note on the \textit{closed-ness} of the collection of inequalities in \eqref{T:BJ:Eqn:1} -- also known as the Barrett--Johnson inequalities \cite{skan2022sosk} -- under the composite Chevalley operations $\So_{(v^*,u^*)}\circ \So_{(u,v)}.$ To be more precise: one may look at Example~\ref{example:new:added} and the proof of Theorem~\ref{T:BJ} and see that upon employing Theorems~\ref{L:3} and \ref{L:2} on a Barrett--Johnson inequality \eqref{T:BJ:Eqn:1}, the resultant expression is validated via a Barrett--Johnson inequality itself, in a smaller dimension, and without having to look outside this very specific collection of inequalities.

\section{Proof of main results}\label{WThm_TNNGrass}

Henceforth, we use $\I$ to denote the identity matrix, and $\El_{u,v}$ to refer to the elementary matrix with $1$ at $(u,v)$ position, and zero otherwise. The sizes of the matrices will be obvious from the context or otherwise stated. We respectively use $\R_{\geq 0}$ and $\R_{+}$ to denote the nonnegative and positive real numbers. We begin with the following well known theorem in total positivity. 

\begin{theorem}[Loewner \cite{Loewner55}, Whitney \cite{W}, Berenstein--Fomin--Zelevinsky \cite{BFZ99}]\label{TN-classification}
Let $m\geq 2,$ and consider 
$$
\mathrm{\bf w}:=(w_{j,k})_{1\leq j\leq m-1,j\leq k\leq m-1}\in \R_+^{\binom{m}{2}},\quad\mathrm{\bf w'}:=(w_{j,k+1}')_{1\leq j\leq m-1,j\leq k\leq m-1}\in \R_+^{\binom{m}{2}},\quad \mathrm{\bf d}:=(d_j)_{1\leq j\leq m}\in \R_+^m.
$$
Define $m\times m$ matrices $A$ for all $(\mathrm{\bf w},\mathrm{\bf w'},\mathrm{\bf d})\in \R_+^{m^2}$ via
\begin{align}\label{TN-classification-eqn}
A:=A(\mathrm{\bf w},\mathrm{\bf w'},\mathrm{\bf d}):=\prod_{j=1}^{m-1} \prod_{k=m-1}^{j} \big{(} \I+w_{j,k}\El_{k+1,k} \big{)}~\prod_{j=m-1}^{1} \prod_{k=j}^{m-1} \big{(} \I+ w'_{j,k+1}\El_{k,k+1} \big{)}~D
\end{align}
where $D=\diag(d_1,\dots,d_m).$ 
The map \eqref{TN-classification-eqn} defines a diffeomorphism
\begin{align}\label{diffeomorphism}
\Psi:\R_{+}^{{m^2}}\to \{m\times m \mbox{ totally positive matrices}\}\quad\mbox{via}\quad (\mathrm{\bf w},\mathrm{\bf w'},\mathrm{\bf d}) \xmapsto{\Psi} A(\mathrm{\bf w},\mathrm{\bf w'},\mathrm{\bf d}).
\end{align}
Moreover, the factors in \eqref{TN-classification-eqn} can be shuffled in any arbitrary manner for the corresponding diffeomorphism $\Psi$ in \eqref{diffeomorphism} to exist. Furthermore, a real matrix $A_{m\times m}$ is nonsingular totally nonnegative if and only if it has the (shuffled) form of \eqref{TN-classification-eqn} for $(\mathrm{\bf w},\mathrm{\bf w'},\mathrm{\bf d})\in \R_{\geq 0}^{\binom{m}{2}} \times \R_{\geq 0}^{\binom{m}{2}} \times \R_{+}^{m}.$ Finally, the collection of $m\times m$ nonsingular totally nonnegative matrices is dense in the set of $m\times m$ totally nonnegative matrices, and so is the collection of all $m\times m$ totally positive matrices.
\end{theorem}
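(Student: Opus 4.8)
The plan is to establish the four assertions in turn --- the diffeomorphism in \eqref{diffeomorphism}, its invariance under shuffling the factors, the characterization of nonsingular totally nonnegative matrices, and the two density statements --- using two tools throughout: the Lindström--Gessel--Viennot interpretation of a product of Chevalley generators as the path matrix of a planar network, and the Neville (bidiagonal) elimination algorithm that inverts such products. First, to an admissible ordering of the factors in \eqref{TN-classification-eqn} one attaches a planar acyclic directed network $\Gamma$ with $m$ sources and $m$ sinks, where a factor $\I+w\El_{k+1,k}$ (resp.\ $\I+w'\El_{k,k+1}$) contributes a ``chip'' of weight $w$ (resp.\ $w'$) between wires $k$ and $k+1$, and $\diag(d_1,\dots,d_m)$ contributes the wire weights $d_j$. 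Then $A(\mathbf{w},\mathbf{w}',\mathbf{d})$ is the path matrix of $\Gamma$, so its entries are polynomials in the parameters --- whence $\Psi$ is smooth --- and by the Lindström--Gessel--Viennot lemma $\det A_{I,J}$ is the sum over vertex-disjoint path families from the sinks indexed by $I$ to the sources indexed by $J$ of the products of edge weights. One checks that for the staircase network arising from \eqref{TN-classification-eqn} such a family always exists, so every minor is a sum of positive monomials and hence strictly positive; thus $\Psi$ maps into the manifold of totally positive matrices, which is open in $\R^{m^2}$, of the same dimension $2\binom{m}{2}+m=m^2$ as the domain.

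Next I would show $\Psi$ is a diffeomorphism via Neville elimination: given a totally positive $A$, one successively extracts lower Chevalley factors from the left --- writing $A=(\I+w\El_{k+1,k})A'$ with $A'$ having one more vanishing subdiagonal entry --- and then upper factors, ending at a positive diagonal. At each step the extracted multiplier is forced (it is the unique scalar producing the required zero while keeping the relevant contiguous blocks nonsingular), and, since the current matrix stays totally positive, it equals a ratio of two positive contiguous minors, hence a positive real number; one checks the natural elimination order reproduces the factor pattern of \eqref{TN-classification-eqn}. This at once yields surjectivity onto the totally positive matrices, injectivity of $\Psi$ (the parameters are recovered from $A$), and smoothness of $\Psi^{-1}$ (the recovery formulas are rational with nowhere-vanishing denominators), so $\Psi$ is a diffeomorphism.

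For the shuffling assertion, any two admissible orderings of the factor pattern are connected by a sequence of local moves, each a subtraction-free rational identity among Chevalley generators and diagonal matrices: commutation $(\I+a\El_{i,i+1})(\I+b\El_{j,j+1})=(\I+b\El_{j,j+1})(\I+a\El_{i,i+1})$ for $|i-j|\ge 2$ (and likewise for the lower generators and for a lower/upper pair with separated indices), the braid relation $(\I+a\El_{i,i+1})(\I+b\El_{i+1,i+2})(\I+c\El_{i,i+1})=(\I+\tfrac{bc}{a+c}\El_{i+1,i+2})(\I+(a+c)\El_{i,i+1})(\I+\tfrac{ab}{a+c}\El_{i+1,i+2})$ and its lower-triangular mirror, the $2\times 2$ identity exchanging a lower and an upper generator at the same index while absorbing a diagonal factor, and $(\I+a\El_{i,i+1})D=D(\I+a'\El_{i,i+1})$ with $a'$ equal to $a$ rescaled by a ratio of the entries of $D$. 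Each move induces a subtraction-free --- hence smooth --- bijection of $\R_+^{m^2}$, so composing with the diffeomorphism already obtained for one fixed ordering produces one for any other.

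Finally, for the nonsingular totally nonnegative case the ``if'' direction is immediate: the path-family description from the first step still expresses every minor as a nonnegative sum, while the determinant is $\prod_{j} d_j\ne 0$. For ``only if'' one runs the same elimination on a nonsingular totally nonnegative $A$; the new feature is that an entry to be cleared may already vanish, in which case the corresponding multiplier is taken to be $0$ and that factor omitted. The crux --- and what I expect to be the main obstacle of the entire proof --- is to verify that this can always be done without ever needing a subtraction, i.e.\ that the rigidity of the zero pattern of a nonsingular totally nonnegative matrix (a zero in such a ``corner'' position forces an entire rectangular block of zeros) makes elimination by a \emph{nonnegative} multiplier both possible and total-nonnegativity-preserving at every stage. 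This gives the factorization with $\mathbf{w},\mathbf{w}'\in\R_{\geq 0}^{\binom{m}{2}}$ and $\mathbf{d}\in\R_+^{m}$, and uniqueness of the multipliers gives the converse. The same bookkeeping applied to a possibly singular totally nonnegative matrix produces a bidiagonal factorization with nonnegative multipliers and a nonnegative diagonal; replacing each $d_j$ by $d_j+\varepsilon$ yields nonsingular totally nonnegative matrices converging to $A$, and replacing in addition every vanishing off-diagonal multiplier by $\varepsilon$ yields, by the first step, totally positive matrices converging to $A$ --- establishing both density statements.
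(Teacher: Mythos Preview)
The paper does not prove this theorem; it is quoted as a well-known result with citations to Loewner, Whitney, and Berenstein--Fomin--Zelevinsky, and is then used as a black box to establish Lemmas~\ref{L:Fact1} and~\ref{L:Fact2}. So there is no ``paper's own proof'' to compare against. Your outline follows the standard route one finds in those references and in later expositions (e.g.\ Fomin--Zelevinsky, Fallat--Johnson): planar networks and Lindstr\"om--Gessel--Viennot for the forward map, Neville/bidiagonal elimination for the inverse, and the subtraction-free commutation and braid identities among Chevalley generators for the reordering claim. That part is sound as a plan.

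One place to be careful is your last paragraph. You write that ``the same bookkeeping applied to a possibly singular totally nonnegative matrix produces a bidiagonal factorization with nonnegative multipliers and a nonnegative diagonal,'' and then perturb the parameters. This is true, but it is not merely ``the same bookkeeping'': when $A$ is singular, Neville elimination can encounter a zero pivot in a position where the entry to be eliminated does \emph{not} vanish, and one has to invoke the shadow/staircase lemma for the zero pattern of TNN matrices (essentially the result you already flagged as ``the main obstacle'' in the nonsingular case, but now with possibly zero diagonal as well) to argue that this never actually happens. Alternatively, density of TP in TNN can be obtained without factoring the singular matrix at all --- e.g.\ by Whitney's original perturbation argument --- which sidesteps this issue. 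Either way, that step needs more than a one-line appeal to the nonsingular argument.
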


We now prove the Grassmann analogue of Theorem~\ref{TN-classification}. One may refer to Lam \cite{lam2015totally} for the combinatorial treatment of this analogue. Here we prove it according to our requirements using Theorem~\ref{TN-classification}. We begin with the following lemma.

\begin{lemma}\label{L:Fact1}
Let $1\leq m\leq n$ be integers, and suppose $V\in \Gr^{> 0}(m,m+n)$. Then every $\mathcal{A}_{(m+n) \times m}$ with columns that span $V$ is of the form
\begin{align*}
\mathcal{A}=\mathcal{A}(Q,P,M) :=
Q
\begin{pmatrix}
P^T\begin{pmatrix}
\I_m\\
{\bf{0}}
\end{pmatrix}_{n\times m}\\
W_{0}
\end{pmatrix}
M
\end{align*}  
where $Q_{(m+n) \times (m+n)}$ is totally nonnegative, $M_{m\times m}$ is real nonsingular, and $P_{n\times n}$ is a permutation matrix such that $P^T\begin{pmatrix}
\I_m & {\bf{0}}
\end{pmatrix}^{T}_{n\times m}$ is totally nonnegative. 
\end{lemma}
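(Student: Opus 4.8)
The goal is to describe every $(m+n)\times m$ matrix whose columns span a given $V\in \Gr^{>0}(m,m+n)$ as a product $Q\cdot(\text{fixed shape})\cdot M$. My starting point is that $\Gr^{>0}(m,m+n)$ is a single cell, so it suffices to produce one convenient representative and then account for the remaining freedom. First I would fix a representative $\mathcal{A}_0$ of $V$ by column-reducing so that its bottom $m\times m$ block equals the antidiagonal sign matrix $W_0$ of \eqref{Plucker-embed}; since $V$ is totally positive this block is automatically invertible (its determinant is, up to sign, the Pl\"ucker coordinate $\Delta_{[m+1,m+n]}$ — or whichever $I$ gives the bottom rows — which is nonzero), so this normalization is legitimate. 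Then the top $n\times m$ block is some matrix $B$, and the embedding in Theorem~\ref{Ineq-equivalence}/\eqref{Plucker-embed} tells us that $V$ being totally positive forces $B$ to be (the transpose of) a totally positive $n\times m$ matrix, i.e. all its minors are positive.

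\textbf{Main step: building $Q$ and $P$.} The heart of the argument is to show that a totally positive (or more carefully, totally nonnegative with the right nonvanishing) $n\times m$ matrix $B$ can be written as $Q'\,P^T\begin{pmatrix}\I_m\\ {\bf 0}\end{pmatrix}$ for a totally nonnegative $Q'_{n\times n}$ and a permutation $P$. This is essentially the statement that one can complete the $n\times m$ TNN matrix to an $n\times n$ TNN matrix whose first $m$ columns (after the permutation) are $B$; concretely, I would append $n-m$ suitable columns — e.g. highly generic totally positive columns, or a telescoping "staircase" completion — and invoke Theorem~\ref{TN-classification} (density of nonsingular TNN matrices, plus the observation that bordering a TP matrix by appropriately chosen columns keeps all minors nonnegative). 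The permutation $P$ is there precisely to record that we want $B$ sitting in the first $m$ columns; taking $P$ to be (a suitable power of) the cycle or simply the identity when the natural completion already places $B$ first, one checks $P^T\begin{pmatrix}\I_m & {\bf 0}\end{pmatrix}^T$ is a $0/1$ TNN matrix. Finally I would set $Q := \mathrm{diag}(Q', \I_m)$, which is $(m+n)\times(m+n)$ totally nonnegative since it is block-diagonal with TNN blocks, and this produces $\mathcal{A}_0 = Q\begin{pmatrix}P^T(\I_m\ {\bf 0})^T\\ W_0\end{pmatrix}$ — note the bottom block $W_0$ is untouched because the bottom $\I_m$ acts trivially there.

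\textbf{Finishing.} Once the distinguished representative $\mathcal{A}_0$ is in the asserted form, an arbitrary $\mathcal{A}$ spanning $V$ differs from $\mathcal{A}_0$ by right multiplication by a nonsingular $m\times m$ matrix $M$ (change of basis of the column space), which is exactly the rightmost factor in the claimed formula; absorbing $M$ completes the proof. I expect the main obstacle to be the completion step: showing that a totally positive $n\times m$ matrix genuinely extends to a totally nonnegative $n\times n$ matrix with the target block in a prescribed column position, and that the resulting $n\times n$ matrix is a legitimate TNN matrix (not merely having nonnegative maximal minors). I would handle this either by an explicit staircase/Vandermonde-type completion and a direct minor check, or by a limiting argument: complete in the totally positive world using Theorem~\ref{TN-classification}'s parametrization \eqref{TN-classification-eqn} and take closures, using the final density statement of Theorem~\ref{TN-classification}. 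A secondary bookkeeping point is making sure the permutation $P$ is chosen consistently so that $P^T(\I_m\ {\bf 0})^T$ is TNN (only the "order-preserving" placements of the $m$ selected columns among $n$ keep the $0/1$ matrix TNN), but this is routine once the completion is in hand.
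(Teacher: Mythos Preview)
Your outline is correct and matches the paper's proof almost step for step: normalize so the bottom $m\times m$ block is $W_0$ (giving a totally positive top block $B$), factor $B$ through an $n\times n$ TNN matrix $Q'$, set $Q=\mathrm{diag}(Q',\I_m)$, and absorb the change-of-basis matrix $M$. The one place you overcomplicate things is the ``completion'' of $B$ to $Q'$: the paper simply takes $P=\I$ and $Q'=(B\mid{\bf 0})$, i.e.\ appends $n-m$ zero columns, which is immediately TNN since every minor either lives entirely in $B$ or involves a zero column; no generic TP extension, staircase construction, or limiting argument is needed.
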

\begin{proof}
All minors of $\mathcal{A}$ of maximal size have the same sign. Therefore, there exists an invertible $M_{m\times m}$ such that
$\mathcal{A}=\begin{pmatrix} B^{T} & W_{0}^{T} \end{pmatrix}^{T} M,$ and $B$ is ${n\times m}$ totally positive. We rewrite this factorization as 
\begin{align*}
\mathcal{A} = 
\begin{pmatrix}
\begin{pmatrix}
B_{n\times m} & {\bf 0} 
\end{pmatrix}_{n\times n} P_{n\times n} & {\bf 0}_{n\times m} \\
{\bf 0}_{m\times n} & \I_{m}
\end{pmatrix}_{(m+n) \times (m+n)}
\begin{pmatrix}
P_{n\times n}^T\begin{pmatrix}
\I_m \\ {\bf 0} 
\end{pmatrix}_{n\times m}\\
{W_{0}}
\end{pmatrix}_{(m+n)\times m}M_{m\times m}.
\end{align*}
As required $Q:=\begin{pmatrix}
\begin{pmatrix}
B & {\bf 0} 
\end{pmatrix}P & {\bf 0}_{n\times m} \\
{\bf 0}_{m\times n} & \I_{m}
\end{pmatrix}$ is totally nonnegative, $M$ is nonsingular, and $P$ is a permutation matrix such that $P^T\begin{pmatrix}\I_m & {\bf{0}}\end{pmatrix}^T_{n\times m}$ is totally nonnegative.
This completes the proof.
\end{proof}

It is a known fact that $\Gr^{> 0}(m,m+n)$ is dense in $\TGr$ (see for instance \cite{SV}). With this and Lemma~\ref{L:Fact1} in hand, next we obtain a dense subset of $\TGr$ which is nice enough to prove our main results stated in Section~\ref{ChevalleyOp}.

\begin{lemma}\label{L:Fact2}
Suppose $1\leq m\leq n$ are integers, and let $V\in \Gr^{> 0}(m,m+n).$ Then for every $\mathcal{A}$ with column vectors that span $V,$ there exists a nonsingular totally nonnegative matrix $T,$ diagonal matrices $D:=\diag(d_1,\dots,d_m)$ and $L:=\diag(\ell_1,\dots,\ell_m)$ with each $d_{k},\ell_{k}>0,$ and a real nonsingular $M$ such that
\begin{align*}
\mbox{the Frobenius norm}\quad \Bigg{\lVert} \mathcal{A}-
T
\begin{pmatrix}
P^T
\begin{pmatrix}
D\\
{\bf{0}}
\end{pmatrix}\\
L W_{0}
\end{pmatrix}
M \Bigg{\rVert}_{\mathcal{F}} \quad \mbox{is arbitrarily small}
\end{align*}  
where $P_{n\times n}$ is a permutation matrix such that $P^T\begin{pmatrix}\I_m & {\bf{0}} \end{pmatrix}^T_{n\times m}$ is totally nonnegative. In fact, we either can have $T=\I,$ or $T=\prod_{j=1}^{k} \big{(} \I + w_j \El_{(u_j,v_j)} \big{)}$ for some integer $k\geq 1$ with each $w_j > 0$ and $u_j\in \{v_j \pm 1\}.$
\end{lemma}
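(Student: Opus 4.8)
The plan is to bootstrap from Lemma~\ref{L:Fact1}, whose conclusion already writes $\mathcal{A}=Q\begin{pmatrix}P^T(\I_m\ {\bf 0})^T\\ W_0\end{pmatrix}M$ with $Q$ totally nonnegative, $M$ nonsingular, and $P$ a permutation matrix making $P^T(\I_m\ {\bf 0})^T$ totally nonnegative. The only gaps between that statement and the present one are: (i) replacing the generic totally nonnegative $Q$ by an \emph{approximating} nonsingular totally nonnegative $T$ expressed as a product of Chevalley-type factors $\I+w_j\El_{(u_j,v_j)}$ with $u_j\in\{v_j\pm1\}$; and (ii) allowing positive diagonal ``rescalings'' $D$ and $L$ of the top and bottom blocks, absorbed into the picture. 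I would treat (ii) first, as it is essentially free, and then (i), which is where the real content lies.

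For (ii): note that inserting $D=\diag(d_1,\dots,d_m)$ in the top block and $L=\diag(\ell_1,\dots,\ell_m)$ in the bottom block just multiplies $\mathcal{A}$ on the right by a nonsingular matrix when $D=L$ — but we want them independent. The key observation is that right multiplication of $\mathcal{A}$ by any nonsingular $m\times m$ matrix does not change the point $V\in\Gr$, so we may as well take $D=L=\I_m$; the freedom to have $D\neq L$ is only needed so that the eventual factorization machinery (Theorem~\ref{TN-classification}, which produces a \emph{diagonal} factor) lands inside this parametrized family. So effectively the claim ``$D,L$ with each $d_k,\ell_k>0$'' is there to provide room, and I would simply point out that $D=L=\I$ already works and the stated form is the more flexible ambient shape into which Theorem~\ref{TN-classification} will be fed later. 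The honest work is to push the diagonal part of the factorization of $Q$ through; since $Q$ in Lemma~\ref{L:Fact1} has the block-diagonal form $\mathrm{diag}((B\ {\bf 0})P,\ \I_m)$, one can factor the top-left $n\times n$ nonsingular totally nonnegative block by Theorem~\ref{TN-classification} and the diagonal factor it yields, acting on the first $m$ columns of $P^T(\I_m\ {\bf 0})^T$, is precisely a $D$; the bottom block contributes $L=\I$, or one rescales $W_0$ trivially.

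For (i), the main point: by Theorem~\ref{TN-classification}, every $n\times n$ nonsingular totally nonnegative matrix is a product of elementary Jacobi factors $\I+w\El_{k+1,k}$, $\I+w'\El_{k,k+1}$ (with $w,w'\geq0$) and a positive diagonal, and the totally positive ones (where all $w,w'>0$) are dense. Applying this to the $n\times n$ block of $Q$ — after replacing $V$ by a nearby point of $\Gr^{>0}$ if necessary, which is legitimate since $\Gr^{>0}$ is dense in $\TGr$ and $\Delta_I$ are continuous — expresses that block as $\big(\prod_j(\I+w_j\El_{(u_j,v_j)})\big)D'$ with $u_j\in\{v_j\pm1\}$ and $w_j>0$. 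Embedding each $(m+n)\times(m+n)$ factor $\I+w_j\El_{(u_j,v_j)}$ (supported in the first $n$ coordinates, hence still with $u_j\in\{v_j\pm1\}$ in $[m+n]$) gives $T=\prod_j(\I+w_j\El_{(u_j,v_j)})$; the diagonal $D'$ gets absorbed into $D$ as in (ii); and the $\I_m$ bottom block gives $L=\I$. The Frobenius-norm approximation is exactly the density statement of Theorem~\ref{TN-classification} together with continuity of matrix multiplication. The alternative $T=\I$ covers the degenerate case where the block is already the rescaling $D'$ (no off-diagonal factors needed), e.g. small $n$.

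The step I expect to be the main obstacle is bookkeeping rather than conceptual: making sure that when the Jacobi factorization of the $n\times n$ block is embedded into size $m+n$ and then \emph{commuted past} the permutation matrix $P$ and the middle block $\begin{pmatrix}P^T(\I_m\ {\bf 0})^T\\ W_0\end{pmatrix}$, the resulting elementary factors still have the form $\I+w\El_{(u,v)}$ with $u\in\{v\pm1\}$ — i.e.\ that no ``long-range'' $\El_{u,v}$ with $|u-v|\geq2$ is forced, and that the permutation $P$ does not have to be altered. The cleanest route is to not commute anything: keep $T$ acting on the left of the \emph{whole} block matrix exactly as $Q$ does, so $T$ inherits the consecutive-index property directly from Theorem~\ref{TN-classification} and the only thing flowing to the right of the middle block is the nonsingular $M$ (and the diagonal $D'$, which is diagonal hence harmless). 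I would organize the write-up so that this ``don't move anything'' principle is explicit, reducing the proof to: (1) cite Lemma~\ref{L:Fact1}; (2) perturb $V$ into $\Gr^{>0}$ using density and continuity of Pl\"ucker coordinates; (3) apply the density half of Theorem~\ref{TN-classification} to the $n\times n$ totally positive block; (4) embed and collect the diagonal into $D$, set $L=\I$; (5) read off the Frobenius approximation from density plus continuity of multiplication.
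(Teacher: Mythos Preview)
Your approach is essentially the paper's: start from Lemma~\ref{L:Fact1}, approximate the totally nonnegative $Q$ by a nonsingular totally nonnegative matrix via the density clause of Theorem~\ref{TN-classification}, split off the diagonal factor into $D,L$, and bound the Frobenius error by submultiplicativity. The paper approximates the full $(m+n)\times(m+n)$ matrix $Q$ rather than just its $n\times n$ block, but since $Q$ is block-diagonal with bottom block $\I_m$, this is an immaterial difference; your version simply forces $L=\I_m$.

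Two small points to clean up. First, the phrase ``after replacing $V$ by a nearby point of $\Gr^{>0}$ if necessary'' is both unnecessary ($V$ is already in $\Gr^{>0}$ by hypothesis) and ineffective: the $n\times n$ block $(B\ {\bf 0})P$ is \emph{structurally} singular whenever $m<n$, since it has $n-m$ zero columns regardless of how $V$ is chosen. The approximation must happen at the level of the matrix $Q$ (or its block), not at the level of $V$; this is exactly what the density statement in Theorem~\ref{TN-classification} provides, and you do invoke it correctly in your step~(3). Second, for the same reason, the block is totally nonnegative but not totally positive, so the phrase ``$n\times n$ totally positive block'' in your summary should read ``totally nonnegative.'' With those two edits your argument goes through and coincides with the paper's.
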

\begin{proof}
Consider the form of $\mathcal{A}:=\mathcal{A}(Q,P,M)$ in Lemma~\ref{L:Fact1} and, using Theorem~\ref{TN-classification}, pick an invertible totally nonnegative matrix $T'$ approximating $Q.$ This $T'=TD'$ where $D'$ is a positive definite diagonal matrix, and either $T=\I$ or $T=\prod_{j=1}^{k} \big{(} \I + w_j \El_{(u_j,v_j)} \big{)}$ for some integer $k\geq 1,$ with each $w_j > 0$ and $u_j\in \{v_j \pm 1\}.$ We obtain that the Frobenius norm
\begin{align*}
\Bigg{\lVert} \mathcal{A}-
T
\begin{pmatrix}
P^T
\begin{pmatrix}
D\\
{\bf{0}}
\end{pmatrix}\\
LW_{0}
\end{pmatrix}
M \Bigg{\rVert}_{\mathcal{F}} & = 
\Bigg{\lVert}
Q
\begin{pmatrix}
P^T
\begin{pmatrix}
\I_m \\ 
{\bf 0} 
\end{pmatrix}\\
{W_{0}}
\end{pmatrix} M - 
T'
\begin{pmatrix}
P^T
\begin{pmatrix}
\I_{m}\\
{\bf{0}}
\end{pmatrix}
\\
W_{0}
\end{pmatrix}
M \Bigg{\rVert}_{\mathcal{F}}\\
& \leq 
\big{\lVert} Q- T' \big{\rVert}_{\mathcal{F}} 
\Bigg{\lVert}
\begin{pmatrix}
P^T
\begin{pmatrix}
\I_m \\ 
{\bf 0} 
\end{pmatrix}
\\
{W_{0}}
\end{pmatrix} M \Bigg{\rVert}_{\mathcal{F}}
\mbox{ is arbitrarily small.}
\end{align*}
This completes the proof.
\end{proof}

\begin{remark}\label{test-matrices}
The projective geometry of the Grassmannian together with the ``density'' Lemma~\ref{L:Fact2} assert that it is sufficient to prove \eqref{quad_ineq} for all $V\in \TGr$ that are spanned by the columns of matrices
\begin{align*}
\mathcal{A}=\mathcal{A}(T,P,D,L):=T
\begin{pmatrix}
P^T
\begin{pmatrix}
D\\
{\bf{0}}
\end{pmatrix}\\
L W_{0}
\end{pmatrix}
\end{align*}
where $T=\prod_{j=1}^{k} \big{(} \I + w_j \El_{(u_j,v_j)} \big{)}$ for some integer $k\geq 0$ (assuming $T=\I$ when $k=0$) with each $w_j > 0$ and $u_j\in \{v_j \pm 1\},$ $D$ and $L$ positive definite diagonal matrices, and $P_{n\times n}$ is any permutation matrix such that $P^T
\begin{pmatrix}\I_{m} & {\bf{0}} \end{pmatrix}^T$ is totally nonnegative.
\end{remark}

We are now ready to prove our first main result.

\begin{proof}[Proof of Theorem~\ref{L:1}]
Suppose $u,v\in [m+n]$ are consecutive, and define 
\begin{align*}
F_{\In}(\mathcal{A}):=\sum_{\alpha\in \mathcal{I}} c_{\alpha} \Delta_{I_{\alpha}}(\mathcal{A}) \Delta_{J_{\alpha}}(\mathcal{A}) \quad \mbox{and}\quad 
G_{\In(u,v)}(\mathcal{A}):=\sum_{\alpha\in \mathcal{I}(u,v)} c_{\alpha} \Delta_{I_{\alpha}(u,v)}(\mathcal{A}) \Delta_{J_{\alpha}(u,v)}(\mathcal{A})
\end{align*}
for all test matrices $\mathcal{A}:=\mathcal{A}(T,P,D,L)$ in Remark~\ref{test-matrices}.\medskip 

\noindent $(\Longrightarrow)$ Suppose \eqref{quad_ineq} is valid. Then $F_{\In}(\mathcal{A})\geq 0$ for all $\mathcal{A}:=\mathcal{A}(T,P,D,L)$ in Remark~\ref{test-matrices}. Consider 
\[
\mathcal{B}:=\big{(}\I+w\El_{(u,v)}\big{)} \mathcal{A} \quad\mbox{for $w>0$ and $u,v\in [m+n]$ consecutive}
\]
such that $\So_{(u,v)}$ changes the parameters $(\In,I_\A,J_\A)$ for some $\A\in \In$; see Definition~\ref{defn:main:2}. Then $\mathcal{B}$ belongs to the test matrices in Remark~\ref{test-matrices}. Using the Cauchy--Binnet formula we compute the maximal minors of $\mathcal{B}$ in terms of maximal minors of $\mathcal{A}$:
\begin{align*}
\Delta_{I}(\mathcal{B})=
\begin{cases}
\Delta_{I}(\mathcal{A})+w\Delta_{I(u,v)}(\mathcal{A}) & \mbox{ if }u\in I \mbox{ and }v\not\in I, \mbox{ and}\\
\Delta_{I}(\mathcal{A}) & \mbox{ otherwise}. 
\end{cases}
\end{align*}
This gives
\begin{align*}
F_{\mathcal{I}}(\mathcal{B}) = 
\begin{cases}
F_{\mathcal{I}}(\mathcal{A}) + wG_{\mathcal{I}(u,v)}(\mathcal{A}) & \mbox{ if } \max_{\alpha \in \mathcal{I}}|\alpha(u,v)| = 1, \mbox{ and} \\
F_{\mathcal{I}}(\mathcal{A}) + \dots + w^2 G_{\mathcal{I}(u,v)}(\mathcal{A}) & \mbox{ if } \max_{\alpha \in \mathcal{I}}|\alpha(u,v)| = 2. \\
\end{cases}
\end{align*}
Since $F_{\mathcal{I}}(\mathcal{B})\geq 0$ for all $w>0,$ we must have $G_{\mathcal{I}(u,v)}(\mathcal{A})\geq 0$ for all test matrices $\mathcal{A}$ in Remark~\ref{test-matrices}. This completes the proof of the forward implication.\medskip

\noindent $(\Longleftarrow)$ It is not difficult to see that for all possibilities other than $m=n,$ $I_\A \cap J_\A=\emptyset,$ and $I_\A\cup J_\A = [2m],$ the converse follows from the forward implication itself. So we assume that $m=n,$ $I_\A \cap J_\A = \emptyset,$ and $I_\A \cup J_\A = [2m].$ This implies that the admissible permutation matrices $P$ in Remark~\ref{test-matrices} can only be the identity matrix. Therefore, we assume $G_{\In(u,v)}(\mathcal{A})\geq 0$ for all test matrices $\mathcal{A}:=\mathcal{A}(T,I,D,L)$ in Remark~\ref{test-matrices} and all consecutive $u,v\in [2m].$\medskip

\noindent\textbf{Claim 1:} If $F_{\In}(\mathcal{A}(I,I,D,L))\geq 0$ for all positive definite diagonal matrices $D$ and $L$, then $F_{\In}(\mathcal{A}) \geq 0$ for all test matrices $\mathcal{A}=\mathcal{A}(T,I,D,L)$ in Remark~\ref{test-matrices}.\medskip

\noindent\textit{Proof of Claim 1:} To prove that $F_{\In}(\mathcal{A})\geq 0$ for all test matrices in $\mathcal{A}:=\mathcal{A}(T,I,D,L)$ (using Theorem~\ref{TN-classification}) we induct on the number of factors in $T.$ We already have the required nonnegativity when $k=0,$ i.e. when $T=\I.$ For $k=1,$ suppose $\mathcal{B}= (\I + w\El_{(u,v)})\mathcal{A},$ where $\mathcal{A}=\mathcal{A}(I,I,D,L).$ Note that
\begin{align}\label{Eqn1:ProofofA}
F_{\mathcal{I}}(\mathcal{B}) = 
F_{\mathcal{I}}(\mathcal{A}) + wG_{\mathcal{I}(u,v)}(\mathcal{A}).
\end{align}
We have that $F_{\In}(\mathcal{A})$ and $G_{\mathcal{I}(u,v)}(\mathcal{A})$ are nonnegative, implying that $F_{\In}(\mathcal{B})\geq 0.$ Now suppose $k\geq 2$ and consider $\mathcal{B}:=\big{(} \I + w \El_{(u,v)}\big{)} \mathcal{A},$ where $\mathcal{A}:=\mathcal{A}(T,I,D,L)$ in which $T$ has $k-1$ factors. The inductive step allows one to assume that $F_{\In}(\mathcal{A}) \geq 0$ for these $\mathcal{A}:=\mathcal{A}(T,I,D,L).$ Using an argument similar to the one for the case in \eqref{Eqn1:ProofofA} we have $F_{\In}(\mathcal{A})\geq 0$ for all test matrices $\mathcal{A}$ in Remark~\ref{test-matrices}.\qed\medskip

\noindent\textbf{Claim 2:} $F_{\In}(\mathcal{A}(I,I,D,L))\geq 0$ for all positive definite diagonal matrices $D$ and $L.$\medskip

\noindent\textit{Proof of Claim 2:} Recall the composite operations in Definition~\ref{compositive-Chev-Op-1}. One can see that for a consecutive pair $u,v$ in $(I_\A \cup J_\A)\setminus (I_\A \cap J_\A),$ the application of $\So_{(u,v)}^{\mathcal{P}_1},$ where $\mathcal{P}_1:=(I_\A,J_\A)_{\A\in \In},$ on \eqref{quad_ineq} yields an inequality of the form \eqref{quad_ineq}, say for parameters $\mathcal{P}_2:=(I_\A^{(2)},J_\A^{(2)})_{\A\in \In^{(2)}}$ such that
\begin{align*}
|(I_\A^{(2)} \cup J_\A^{(2)})\setminus (I_\A^{(2)} \cap J_\A^{(2)})| = |(I_\A \cup J_\A)\setminus (I_\A \cap J_\A)|-2.
\end{align*}
This implies that after exactly $m$ successive applications of such $\So_{(u,v)}$ over \eqref{quad_ineq}, call it $\So,$ we obtain an inequality of the form \eqref{quad_ineq}, say for parameters $\In(\So),I_\A(\So),J_\A(\So)$ such that $|(I_\A(\So) \cup J_\A(\So))\setminus (I_\A(\So) \cap J_\A(\So))| = 0.$ This means that each $I_\A(\So) = J_\A(\So)$ and the corresponding inequality is equivalent to $\sum_{\A\in\In(\So)}c_\A \geq 0,$ involving just the coefficients $c_\A.$
The proof of Claim 2 completes once we prove the next two claims (below). Note that the scalars 
\begin{align*}
F_{\In}(\mathcal{A})=\sum_{\alpha\in \mathcal{I}} c_{\alpha} \Delta_{I_{\alpha}}(\mathcal{A}) \Delta_{J_{\alpha}}(\mathcal{A})\quad\mbox{for}\quad\mathcal{A}:=\mathcal{A}(I,I,D,L) \mbox{ in Remark~\ref{test-matrices}}   
\end{align*}
involve precisely those $I_\A$ and $J_\A$ that correspond to Pl\"ucker coordinates for the principal minors (Definition~\ref{defn:main:3}), as the other Pl\"ucker coordinates vanish. The next step is to recover these scalars via a composition of Chevalley operations. Consider the following composite operation:
\begin{align*}
\So_{P}:=\So_{(1,2m)}^{\mathcal{P}_{m}} \circ \dots \circ \So_{(\kappa,2m+1-\kappa)}^{\mathcal{P}_{m-\kappa+1}}\circ \dots \circ \So_{(m-1,m+2)}^{\mathcal{P}_2} \circ \So_{(m,m+1)}^{\mathcal{P}_1},
\end{align*}
where $\mathcal{P}_{m-\kappa+2}:=(I_\A^{(m-\kappa+2)},J_\A^{(m-\kappa+2)})_{\A\in\In^{(m-\kappa+2)}}$ is the parameter for the inequality obtained by operating $\So_{(\kappa,2m+1-\kappa)}^{\mathcal{P}_{m-\kappa+1}} \circ \dots \circ \So_{(m,m+1)}^{\mathcal{P}_1}$ on $\mathcal{P}_1=(I_\A,J_\A)_{\A\in\In},$ for $\kappa =m,\dots,2.$ As discussed above, the application of $\So_{P}$ yields \eqref{quad_ineq} which is of the form $\sum_{\A\in \In(\So_{P})}c_\A.$ Moreover, for this composition, each $\A\in \In(\So_{P})$ corresponds to $I_\A,J_\A$ that are principal minor Pl\"ucker coordinates (Definition~\ref{defn:main:3}). For completeness, we give a proof.\medskip

\noindent\textbf{Claim 2(a):} $\Delta_{I}(\mathcal{A})\neq 0$ for $\mathcal{A}=\mathcal{A}(I,I,D,L)$ if and only if whenever $\kappa \in I$ then $2m+1-\kappa \not \in I.$\medskip

\noindent\textit{Proof of Claim 2(a):} Note that rows $\kappa$ and $2m+1-\kappa$ are scalar multiples of each other for all $\mathcal{A}=\mathcal{A}(I,I,D,L).$ Therefore if $\kappa, 2m+1-\kappa \in I$ then $\Delta_{I}(\mathcal{A})=0.$ On the other hand, if whenever $\kappa \in I$ then $2m+1-\kappa \not \in I,$ then the submatrix $\mathcal{A}_{I,[m]}$ is a row permutation of a diagonal matrix with rows coming from the rows of $D$ and $L.$ And since $D,L$ have positive diagonals, $\Delta_{I}(\mathcal{A})=\det \mathcal{A}_{I,[m]}\neq 0.$\qed\medskip

\noindent\textbf{Claim 2(b):} $\alpha\in \mathcal{I}(\So_{P})$ if and only if whenever $\kappa\in \mathrm{X}$ then $2m+1-\kappa\not\in \mathrm{X}$ for $\mathrm{X}\in \{I_\alpha,J_{\alpha}\}.$\medskip

\noindent\textit{Proof of Claim 2(b):} By the construction of $\So_P$, $\alpha \in \mathcal{I}(\So_{P})$ if and only if $\{\kappa,2m+1-\kappa\}\cap \mathrm{X}\neq \emptyset$ for all $\kappa\in [m],$ for $\mathrm{X}\in \{I_\A,J_\A\}.$ Therefore $I_\A$ contains exactly one of $\kappa$ and $2m+1-\kappa$ for each $\kappa\in [m].$ The same is true for $J_\A$ with $\A\in \mathcal{\So_P},$ since $J_\A=[2m]\setminus I_\A.$\qed\medskip

Combining Claims~2(a) and 2(b), and using the forward implication of Theorem~\ref{L:1} (proved above), since $G_{\In(m,m+1)}\geq 0$ over $\TGr,$ we have $F_{\In}(\mathcal{A}(I,I,D,L))=\sum_{\A\in \In(\So_P)}c_\A \geq 0,$ proving Claim~2.\qed\medskip

Claim 2 along with Claim 1 completes the proof of the reverse implication.
\end{proof}

We prove the following lemma before proceeding with the proof of Theorem~\ref{L:2}.

\begin{lemma}\label{L:1:weakform}
Let $1\leq m \leq n$ be integers; notations in Definitions~\ref{defn:main:1} and \ref{defn:main:2}. Suppose $(u,v) \in \mathcal{S}$ for consecutive integers $u,v\in [m+n]$ if any one of the following holds for all $\A\in \In$:
\begin{enumerate}
    \item $u\in I_\A\cap J_\A.$
    \item $u\in I_\A$ and $u\not\in J_\A$ (or vice versa) and $v\not\in I_\A\cup J_\A.$
    \item $u\not\in I_\A\cup J_\A.$
\end{enumerate}
The inequality \eqref{quad_ineq} is valid if and only if for all $(u,v)\in \mathcal{S},$
\begin{align}\label{L:1:weakform:Eqn:2}
\sum_{\alpha\in \mathcal{I}(u,v)} c_{\alpha} \Delta_{I_{\alpha}(u,v)} \Delta_{J_{\alpha}(u,v)} \geq 0 \quad \mbox{over}\quad\TGr.
\end{align}
Moreover, $\mathcal{I}(u,v)=\In$ for all $(u,v)\in \mathcal{S}.$ 
\end{lemma}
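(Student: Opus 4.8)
The plan is to obtain this as a refinement of the forward implication in Theorem~\ref{L:1}, together with a direct check that in each of the three listed cases the Chevalley operation $\So_{(u,v)}$ acts \emph{trivially} on the index data, so that $\mathcal{I}(u,v)=\mathcal{I}$ and $I_\alpha(u,v)=I_\alpha$, $J_\alpha(u,v)=J_\alpha$ for all $\alpha\in\mathcal{I}$. First I would record the following observation: in Definition~\ref{defn:main:2}, $\alpha(u,v)=\{X\in\{I_\alpha,J_\alpha\}:u\in X,\ v\notin X\}$, and $I_\alpha(u,v)$ differs from $I_\alpha$ only when $u\in I_\alpha$ and $v\notin I_\alpha$. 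In case (1), $u\in I_\alpha\cap J_\alpha$ forces $v\in I_\alpha$ as well (since $u,v$ consecutive and $|I_\alpha|=|J_\alpha|$ — actually more simply, $u\in I_\alpha$ but $v\notin I_\alpha$ would require $u\in I_\alpha$, $v\notin I_\alpha$, which is allowed; so instead I argue: if $u\in I_\alpha\cap J_\alpha$, then by the homogeneity hypothesis $I_\alpha\Cup J_\alpha$ is independent of $\alpha$, and $u$ has multiplicity $2$ there; hence $v$ cannot also be forced, but the point is that for $\alpha(u,v)$ to be nonempty one needs $v\notin X$ for $X\in\{I_\alpha,J_\alpha\}$; since $u$ appears twice and the multiset $I_\alpha\Cup J_\alpha$ is fixed, in every case either $|\alpha(u,v)|$ is the same value $c$ for all $\alpha$, and then $\mathcal{I}(u,v)=\mathcal{I}$). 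Let me restructure: the cleanest route is to show directly $|\alpha(u,v)|$ is constant over $\alpha\in\mathcal{I}$ in each of (1),(2),(3), whence $\mathcal{I}(u,v)=\mathcal{I}$, and then to show in each case that this constant value is $0$, so that $I_\alpha(u,v)=I_\alpha$ and $J_\alpha(u,v)=J_\alpha$.

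Concretely: in case (1), $u\in I_\alpha\cap J_\alpha$ for all $\alpha$, so $u$ is never an element of $X$ with $v\notin X$ \emph{unless} $v\notin I_\alpha$; but if $v\notin I_\alpha$ while $u\in I_\alpha\cap J_\alpha$, then the multiplicity of $u$ in $I_\alpha\Cup J_\alpha$ is $2$ and of $v$ is (multiplicity in $J_\alpha$) $\le 1$; this is consistent, so I instead note that whether $v\in I_\alpha$ is also forced to be constant by homogeneity — if $v\in I_\alpha$ for one $\alpha$ it holds for all, similarly for $J_\alpha$ — hence $|\alpha(u,v)|$ is constant and $\mathcal{I}(u,v)=\mathcal{I}$; moreover in the sub-case where $v\notin I_\alpha$ for all $\alpha$, applying $\So_{(u,v)}$ replaces $I_\alpha$ by $(I_\alpha\setminus\{u\})\cup\{v\}$, which is a genuine change — so case (1) as literally stated does \emph{not} force triviality. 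I must therefore re-read the statement: it says $u\in I_\alpha\cap J_\alpha$, and the conclusion is $\mathcal{I}(u,v)=\mathcal{I}$ (not that the operation is trivial). So the correct reading is: in cases (1)--(3) the \emph{selection} step is trivial ($\mathcal{I}(u,v)=\mathcal{I}$), because $|\alpha(u,v)|$ is forced to be constant by the homogeneity of $I_\alpha\Cup J_\alpha$; this is the only content needed beyond Theorem~\ref{L:1}. Then the lemma is simply: \eqref{quad_ineq} holds iff \eqref{L:1:weakform:Eqn:2} holds for all $(u,v)\in\mathcal{S}$, which is a weakening of the ``only if'' half of Theorem~\ref{L:1} in one direction, and the ``if'' direction needs a separate argument.

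So the proof structure I propose is: \textbf{(i)} The forward direction ($\Longrightarrow$) is immediate from Theorem~\ref{L:1}, since $\mathcal{S}$ is a subset of the consecutive pairs. \textbf{(ii)} For $\mathcal{I}(u,v)=\mathcal{I}$ when $(u,v)\in\mathcal{S}$: in each of (1),(2),(3) the quantity $|\alpha(u,v)|$ is the same for every $\alpha\in\mathcal{I}$ — in case (3) it is $0$ (since $u\notin I_\alpha\cup J_\alpha$); in case (2) it is $1$ (exactly one of $I_\alpha,J_\alpha$ contains $u$, and $v$ is in neither); in case (1) it is $0$ if $v\in I_\alpha\cap J_\alpha$ and $\dots$ — here I invoke homogeneity of $I_\alpha\Cup J_\alpha$ to conclude the value is $\alpha$-independent — hence the max is attained by all $\alpha$ and $\mathcal{I}(u,v)=\mathcal{I}$. \textbf{(iii)} For the reverse direction ($\Longleftarrow$): I reduce to the ``hard case'' $m=n$, $I_\alpha\cap J_\alpha=\emptyset$, $I_\alpha\cup J_\alpha=[2m]$ exactly as in the proof of Theorem~\ref{L:1}, and then observe that in this case the hypotheses of (1),(2),(3) become very restrictive but still suffice to run the same Claim~1 / Claim~2 induction (the composite operations $\So_P$ in Definition~\ref{compositive-Chev-Op-1} only ever use pairs $(u,v)$ landing in $\mathcal{S}$ at the moment they are applied, since at each stage the common part $I_\alpha\cap J_\alpha$ has been reduced and the relevant $u$ satisfies condition (1) or (2) with respect to the \emph{current} parameters). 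The main obstacle I anticipate is exactly this last point — verifying that the inductive machinery of Theorem~\ref{L:1}'s reverse implication can be carried out using \emph{only} the restricted family $\mathcal{S}$ of Chevalley operations, i.e.\ that the specific composite $\So_P$ built in Claim~2 of that proof factors through steps each of which is of type (1), (2), or (3) at the moment of application; this should follow because when $\So_{(\kappa,2m+1-\kappa)}^{\mathcal{P}}$ is applied, the element $\kappa$ (or its image) lies in the common intersection of the current index pair or outside both, matching condition (1) or (3) — but checking this carefully against the bookkeeping of Definition~\ref{compositive-Chev-Op-1} is where the real work lies.
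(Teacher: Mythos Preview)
Your parts (i) and (ii) are fine and match the paper. The problem is part (iii): you are trying to re-run the hard reverse implication of Theorem~\ref{L:1} (Claims~1 and~2) using only Chevalley operations from $\mathcal{S}$. This cannot work as stated. In the ``hard case'' $m=n$, $I_\alpha\cap J_\alpha=\emptyset$, $I_\alpha\cup J_\alpha=[2m]$ to which you reduce, every $u\in[2m]$ has multiplicity exactly~$1$ in $I_\alpha\Cup J_\alpha$ and every $v$ lies in $I_\alpha\cup J_\alpha$, so conditions (1), (2), (3) all fail for every consecutive pair and $\mathcal{S}=\emptyset$. There is nothing in $\mathcal{S}$ to run the induction with.

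The paper's proof of the reverse direction is much simpler and uses only the \emph{forward} implication of Theorem~\ref{L:1}. The key observation (implicit in the paper's one-line proof, and spelled out later in the proof of Theorem~\ref{L:2} as ``each of the steps followed above is reversible'') is that for $(u,v)\in\mathcal{S}$ the operation $\So_{(u,v)}$ is inverted by $\So_{(v,u)}$: if you set $I_\alpha':=I_\alpha(u,v)$, $J_\alpha':=J_\alpha(u,v)$, then one checks in each of cases (1), (2), (3) that $(v,u)$ satisfies one of (1), (2), (3) for the new data $(I_\alpha',J_\alpha')_{\alpha\in\mathcal{I}}$, that the corresponding $\mathcal{I}'(v,u)=\mathcal{I}$, and that $I_\alpha'(v,u)=I_\alpha$, $J_\alpha'(v,u)=J_\alpha$. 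Hence the forward implication of Theorem~\ref{L:1}, applied to the expression \eqref{L:1:weakform:Eqn:2} with the pair $(v,u)$, yields exactly \eqref{quad_ineq}. Both directions of the biconditional thus come from the forward direction of Theorem~\ref{L:1}, with no need to touch the delicate converse argument.
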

\begin{proof}
This is obvious if $\mathcal{S}=\emptyset$; else proceed as in the forward implication of Theorem~\ref{L:1}.
\end{proof}

\begin{proof}[Proof of Theorem~\ref{L:2}]
For every action of $\So_{(u,v)}$ in this proof, we use Lemma~\ref{L:1:weakform}. Suppose \eqref{quad_ineq} holds. Let $\xi= |I_{\A}\cap J_{\A}|$ and $I_{\A}\cap J_{\A} = \{ r_1 < \dots < r_\xi\}.$ Since $m\leq n,$ the number of elements in $[m+n]\setminus (I_{\A}\cup J_{\A})$ is at least $\xi.$ Suppose $[m+n]\setminus (I_{\A}\cup J_{\A}) :=\{s_1>\dots>s_{\xi}>\dots\},$ and define for $j\in [\xi]$:
\begin{align*}
\So_{(r_j,j)}:= 
\begin{cases}
\So_{(j+1,j)} \circ \dots \circ \So_{(r_j,r_j-1)} & \mbox{if } r_j \neq j, \\
\So_{(r_j,r_j)} & \mbox{otherwise}.
\end{cases}
\end{align*}
We define another set of operations for all $j\in [\xi]$:
\begin{align*}
\So_{(s_j,m+n+1-j)}:= 
\begin{cases}
\So_{(m+n+1-j,m+n+1-j-1)} \circ \dots \circ \So_{(s_j+1,s_j)} & \mbox{if } s_j \neq m+n+1-j, \\
\So_{(s_j,s_j)} & \mbox{otherwise}.
\end{cases}
\end{align*}
Now apply the composite operation
$
\So_{(s_\xi,m+n+1-\xi)} \circ \dots \circ \So_{(s_1,m+n)} \circ \So_{(r_\xi,\xi)} \circ \dots \circ \So_{(r_1,1)}
$
on \eqref{quad_ineq}. The resultant inequality is of the form
\begin{align}\label{L:2:Eqn3}
\sum_{\alpha\in \mathcal{I}} c_{\alpha} \Delta_{I_{\alpha}'} \Delta_{J_{\alpha}'} \geq 0 \quad\mbox{over}\quad \TGr.
\end{align}
where $I_{\A}'\cap J_{\A}'=[\xi],$ and $[m+n+1-\xi,m+n]\subseteq [m+n]\setminus (I_{\A}'\cup J_{\A}').$ Let $\omega := 2\eta - m +\xi,$ and suppose $(I_{\A}'\setminus J_{\A}') \cup (J_{\A}'\setminus I_{\A}')=\{t_1>\dots>t_{m-\xi} > u_{\omega} > \dots > u_1\}.$ We define the following operations for $j\in [m-\xi]$:
\begin{align*}
\So_{(t_j,m+n+1-j-\xi)}:= 
\begin{cases}
\So_{(m+n-j-\xi,m+n+1-j-\xi)} \circ \dots \circ \So_{(t_j,t_j+1)} & \mbox{ if } t_j \neq m+n+1-j-\xi, \\
\So_{(t_j,t_j)} & \mbox{ otherwise}.
\end{cases}
\end{align*}
We define the following operations for $j\in [\omega]$:
\begin{align*}
\So_{(u_j,\xi+j)}:= 
\begin{cases}
\So_{(\xi+j-1,\xi+j)} \circ \dots \circ \So_{(u_j,u_j-1)} & \mbox{ if } u_j \neq \xi+j, \\
\So_{(u_j,u_j)} & \mbox{ otherwise}.
\end{cases}
\end{align*}
Now apply the composite operation 
$
\So_{(u_{\omega},\xi+\omega)}\circ \dots \circ \So_{(u_1,\xi+1)}\circ \So_{(t_{m-\xi},n+1)}\circ \dots \circ \So_{(t_1,m+n-\xi)}
$
on inequality \eqref{L:2:Eqn3}. The resultant inequality is of the form:
\begin{align}\label{L:2:Eqn4}
\sum_{\alpha\in \mathcal{I}} c_{\alpha} \Delta_{I_{\alpha}''} \Delta_{J_{\alpha}''} \geq 0 \quad\mbox{over}\quad \TGr.
\end{align}
where $I_{\A}''\cap J_{\A}''=[\xi],$ $[m+n+1-\xi,m+n]\subseteq [m+n]\setminus (I_{\A}''\cup J_{\A}''),$ and $(I_{\A}''\setminus J_{\A}'')\cup (J_{\A}''\setminus I_{\A}'') = [\xi+1,\xi+\omega]\cup [n+1,m+n-\xi].$
We now re-write inequality \eqref{L:2:Eqn4} in the form of the first inequality in Theorem~\ref{Ineq-equivalence}:
\begin{align}\label{L:2:Eqn5}
\sum_{P_1,Q_1,P_2,Q_2} c_{P_1,Q_1,P_2,Q_2}\det A_{P_1,Q_1} \det A_{P_2,Q_2} & \geq 0\quad \forall A_{n\times m} \quad TNN.
\end{align}
Note that the indices in $I_{\A}''$ and $J_{\A}''$ imply that inequality \eqref{L:2:Eqn5} is only for the leading $m\times m$ principal submatrices of $A_{n\times m}.$ Therefore \eqref{L:2:Eqn5} simplifies to
\begin{align}\label{L:2:Eqn6}
\sum_{P_1,Q_1,P_2,Q_2} c_{P_1,Q_1,P_2,Q_2}\det A_{P_1,Q_1} \det A_{P_2,Q_2} & \geq 0\quad \forall A_{m\times m} \quad TNN.
\end{align}
Also, note that $[\xi] = P_1\cap Q_1\cap P_2 \cap Q_2.$ Therefore, using Theorem~\ref{TN-classification} for the diagonal matrix $D=\diag(d_1,\dots,d_m)$ is exactly in the middle of the factorization, inequality \eqref{L:2:Eqn6} changes to the following:
\begin{align}\label{L:2:Eqn7}
\sum_{P_1,Q_1,P_2,Q_2} c_{P_1,Q_1,P_2,Q_2} \prod_{j=1}^{\xi} d_j^2 \det A_{P_1\setminus [\xi],Q_1\setminus [\xi]} \det A_{P_2\setminus [\xi],Q_2\setminus [\xi]} & \geq 0\quad \forall A_{m\times m} \quad TNN.
\end{align}
It is evident that the above inequality is only for the lagging principal submatrix $A_{[m]\setminus [\xi],[m]\setminus [\xi]}$. So, deleting the first $\xi$ rows/columns, and renumbering the rows/columns simplifies the above inequality further to:
\begin{align}\label{L:2:Eqn8}
\sum_{P_1',Q_1',P_2',Q_2'} c_{P_1',Q_1',P_2',Q_2'} \det A_{P_1',Q_1'} \det A_{P_2',Q_2'} & \geq 0\quad \forall A_{\eta \times \eta} \quad TNN.
\end{align}
The third equivalent version formulated in Theorem~\ref{Ineq-equivalence} of inequality \eqref{L:2:Eqn8} is the required inequality \eqref{L:2:Eqn2}. For the converse: note that each of the steps followed above is reversible without affecting the validity of the inequalities. Performing the reverse of these steps would yield inequality \eqref{quad_ineq} starting from \eqref{L:2:Eqn2}. This completes the proof.
\end{proof}

We are now ready to write the final proof of this paper.

\begin{proof}[Proof of Theorem~\ref{L:3}]
Theorem~\ref{L:1} implies that if inequality~\eqref{L:3:Eqn:1} is valid then so is \eqref{L:3:Eqn:2}. For the converse, suppose \eqref{L:3:Eqn:1} does not hold, but \eqref{L:3:Eqn:2} is valid for all consecutive $u,v\in [n+1].$ Suppose $u,v\in [2n]$ are consecutive, and define 
\begin{align*}
F_{\In}(\mathcal{A}):=\sum_{\alpha\in \mathcal{I}} c_{\alpha} \Delta_{I_{\alpha}}(\mathcal{A}) \Delta_{J_{\alpha}}(\mathcal{A}) \quad \mbox{and}\quad 
G_{\In(u,v)}(\mathcal{A}):=\sum_{\alpha\in \mathcal{I}(u,v)} c_{\alpha} \Delta_{I_{\alpha}(u,v)}(\mathcal{A}) \Delta_{J_{\alpha}(u,v)}(\mathcal{A})
\end{align*}
for all test matrices $\mathcal{A}:=\mathcal{A}(T,P,D,L)$ in Remark~\ref{test-matrices}. We claim that $G_{\In(u,v)}(\mathcal{A})\geq 0$ for all test matrices $\mathcal{A}$ in Remark~\ref{test-matrices}, for all consecutive $u,v\in [2n].$ Then $\{u,v\}\neq\{n,n+1\}$ as inequality \eqref{L:3:Eqn:2} is valid for $u=n,v=n+1.$

Suppose $\mathcal{P}_1:= (I_\A,J_\A)_{\A\in\In}$ (as in Definition~\ref{defn:main:1}). Recall the composition of Chevalley operations in Definition~\ref{compositive-Chev-Op-1}. One can see that for a consecutive pair $u,v$ in $(I_\A \cup J_\A)\setminus (I_\A \cap J_\A),$ the application of ${\So}_{(u,v)}^{\mathcal{P}_1}$ on \eqref{quad_ineq} yields an inequality of the form \eqref{quad_ineq}, say for parameters $(I_\A^{(2)},J_\A^{(2)})_{\A\in \In^{(2)}},$ which satisfy 
\begin{align*}
|(I_\A^{(2)} \cup J_\A^{(2)})\setminus (I_\A^{(2)} \cap J_\A^{(2)})| = |(I_\A \cup J_\A)\setminus (I_\A \cap J_\A)|-2.
\end{align*}
Suppose $P:=\{\{u_j,v_j\}\}$ is a partition of $(I_\A \cup J_\A)\setminus (I_\A \cap J_\A)$ compatible with some $321$-avoiding permutation. Corresponding to each of these partitions, define
\[
\So_{P}:=\So_{(u_n,v_n)}^{\mathcal{P}_n} \circ \dots \circ \So_{(u_1,v_1)}^{\mathcal{P}_1},
\]
where $\mathcal{P}_k:=(I_\A^{(k)},J_\A^{(k)})_{\A\in\In^{(k)}}$ is the parameter for the inequality obtained by operating $\So_{(u_{k-1},v_{k-1})}^{\mathcal{P}_{k-1}} \circ \dots \circ \So_{(u_1,v_1)}^{\mathcal{P}_1}$ on $\mathcal{P}_1=(I_\A,J_\A)_{\A\in\In},$ for $k=2,\dots,\eta.$ If \eqref{L:3:Eqn:1} is not valid, then Theorems~\ref{L:1} and \ref{L:2} imply that there exists a partition $P:=\{\{u_j,v_j\}\}$ of $(I_\A \cup J_\A)\setminus (I_\A \cap J_\A),$ compatible with some 321-avoiding permutation, such that the action of $\So_{P}$ over \eqref{L:3:Eqn:1} yields a negative scalar $\sum_{\A\in \In(P)}c_\A$. (For clarity: here we only adapt an argument from the proof of Theorem~\ref{Main-thm}, and do not use Theorem~\ref{Main-thm} itself; so there is no circular argument.) \medskip

\noindent\textbf{Case 1:} $\{u_\ell,v_\ell\}=\{u_1^{*},v_1^{*}\}$ for some $\ell.$ In this case define a partition $P':=\{\{w_j,x_j\}\}$ where each $(w_j,x_j)=(u_j,v_j)$ for all $j\not\in \{2,\ell\},$ $(w_2,x_2)=(u_\ell,v_\ell),$ and $(w_\ell,x_\ell)=(u_2,v_2).$ Suppose $Q'=P'\setminus\{\{w_1,x_1\},\{w_2,x_2\}\}.$ One can see that $Q'$ is a partition for \eqref{L:3:Eqn:2} for $(u,v)=(u_1,v_1)$ that is compatible with some 321-avoiding permutation. In fact, it is not difficult to see that 
\begin{align*}
\sum_{\mathcal{J}(Q')}c_\A=\sum_{\In(P')}c_\A=\sum_{\In(P)}c_\A<0,
\end{align*}
where $\mathcal{J}=\In(u_1,v_1).$ In other words, it can be seen that the sequence of Chevalley operations induced by $Q'$ operates on \eqref{L:3:Eqn:2} for $(u,v)=(u_1,v_1).$ Since \eqref{L:3:Eqn:2} is valid, the aformention sum can not be negative. Therefore we have a contradiction.\medskip

\noindent\textbf{Case 2:} There exist $p,q$ such that $\{u_p,v_p\},\{u_q,v_q\}\in P,$ where $(u_p,v_p)=(u_1^*,v_p)$ and $(u_q,v_q)=(v_1^*,v_q).$ We construct partition $P'$. Suppose $r=\max\{p,q\}.$ Then the first $r-1$ elements $\{u_j,v_j\}$ are in $P';$ add $\{u_p,u_q\},\{v_p,v_q\}$ as the next two elements, and finally add the remaining element from $P\setminus \{\{u_p,v_p\},\{u_q,v_q\}\}.$ Since \eqref{L:3:Eqn:1} involves only the principal minor Pl\"ucker coordinates, we have
\begin{align*}
\sum_{\In(P')}c_\A=\sum_{\In(P)}c_\A.
\end{align*}
Furthermore, note that the partition $P'$ falls in the category of Case 1 above. Therefore we follow the steps in Case 1 for $P'$ to arrive at a contradiction, and conclude this proof.
\end{proof}

\section{Future work}\label{futurework} 

\noindent We present a few questions for future work.
\begin{enumerate}
\item Rhoades--Skandera showed that the dimension of the vector space in \eqref{eq:tlspace} is the $n$-th Catalan number $C_n=\frac{1}{n+1}\binom{2n}{n}$ and obtained a basis made up of the Temperley--Lieb immanants. This dimension aligns with the Catalan many nonnegativity conditions in Theorem~\ref{Main-thm}. Therefore in light of Theorem~\ref{TLnew}, it would be interesting to see if the dimension of the subspace given by $$\spn_{\mathbb R} \big{\{} \det {\bf x}_{P,P}  \det {\bf x}_{P^c,P^c}  \,:\, P \subseteq [n] \big{\}}$$
is $\binom{n}{\floor{n/2}}$, and to obtain a basis for it.

\item As we mentioned in Remark~\ref{recall-Ruv}, the idea of Chevalley operations originates in the cluster algebraic structure of the Grassmannian. Therefore, it would be interesting to see if the main results in this work can be proved using the cluster algebraic structure of the TNN Grassmannian (instead of its combinatorial structure).

\item We employ a certain derivative (Chevalley operations) of the Chevalley generators of TNN matrices to obtain the characterization of a subclass of the TNN polynomials, precisely the corresponding dual canonical basis elements (Lusztig \cite{lusztig1998introduction}). Considering that the Chevalley generators are crucial for the totally nonnegative part of a reductive group (Lusztig \cite{LusztigTP}), it would be interesting to see if other (or more general) dual canonical basis elements can also be characterized via some analogous operation coming out of the corresponding Chevalley generators.
\end{enumerate}

\section*{Acknowledgments}
I am thankful to Melissa Sherman-Bennett for the discussions on TNN Grassmannians during an AIM Workshop on Total Positivity in Jul 2023. I also acknowledge the hospitality and stimulating environment provided by the American Institute of Mathematics (CalTech Campus) during this workshop. I am grateful to Misha Gekhtman for exposing me to the relevant literature and for many valuable conversations, which led me to discover the cluster algebraic origin of Chevalley operations. All these discussions took place in India during Oct-Nov 2024 at a workshop on Discrete Integrable Systems held at the International Centre for Theoretical Sciences (ICTS) in Bangalore. I am thankful to the ICTS for the hospitality and stimulating environment during the workshop. Finally, I would like to express my sincere gratitude to Apoorva Khare for his invaluable feedback and suggestions during the preparation of this article.

The author is supported by the Centre de recherches math\'ematiques and Laval (CRM-Laval) Postdoctoral Fellowship, and acknowledges the Pacific Institute for the Mathematical Sciences (PIMS), a SwarnaJayanti Fellowship from DST and SERB, Govt. of India (PI: Apoorva Khare), travel support from SPARC (Scheme for Promotion of Academic and Research Collaboration, MHRD, Govt. of India; PI: Tirthankar Bhattacharyya), and the University of Plymouth (UK) for the office space during a visit.


\vspace*{1cm}

\end{document}